\newcommand{\ux}{{\boldsymbol x}}
\newcommand{\uz}{{\boldsymbol z}}
\newcommand{\Kx}{\K[[\ux]]}
\newcommand{\qh}{QH}
\newcommand{\sqh}{SQH}
\newcommand{\rsqh}{rSQH}
\newcommand{\csqh}{cSQH}
\newcommand{\ph}{PH}
\newcommand{\rsph}{rSPH}
\newcommand{\csph}{cSPH}
\newcommand{\Aa}{A}
\newcommand{\AAa}{AA}
\newcommand{\AC}{AC}
\newcommand{\AAC}{AAC}
\newcommand{\SND}{IND}
\newcommand{\SNND}{INND}
\DeclareMathOperator{\In}{in}
\DeclareMathOperator{\gr}{gr}
\DeclareMathOperator{\jj}{j}
\DeclareMathOperator{\tj}{tj}
\newcommand{\tom}[1]{}
\newcommand{\lang}[1]{}
\newcommand{\kurz}[1]{#1}
\newcommand{\bmath}{\kurz{\begin{math}}\lang{\begin{displaymath}} }
\newcommand{\emath}{\kurz{\end{math}}\lang{\end{displaymath}} }
  \newcommand{\LL}{L}
  \newcommand{\ideal}{\subset}
  \newcommand{\unideal}{\subseteq}}
  \newcommand{\LL}{{\mathds L}}
  \newcommand{\ideal}{\lhd}
  \newcommand{\unideal}{\unlhd}
\renewenvironment{enumerate}{%
  \listparindent0pt 
  \ifnum \@enumdepth >3 \@toodeep\else
      \advance\@enumdepth \@ne
      \edef\@enumctr{enum\romannumeral\the\@enumdepth}\list
      {\csname label\@enumctr\endcsname}{\usecounter
        {\@enumctr}\def\makelabel##1{\hss\llap{\upshape##1}}}\fi
      \itemsep1ex\partopsep1ex\labelsep1ex
}{%
  \endlist
}
\begin{document}

   \parindent0cm

   \title[Normal forms]{Normal Forms of Hypersurface Singularities in
     Positive Characteristic}
   \dedicatory{Dedicated to Sabir Medzhidovich Gusein-Zade at the
     occasion of his 60th birthday}
   \author{Yousra Boubakri}
   \address{Universit\"at Kaiserslautern\\
     Fachbereich Mathematik\\
     Erwin--Schr\"odinger--Stra\ss e\\
     D --- 67663 Kaiserslautern
     }
   \email{yousra@mathematik.uni-kl.de}
   \author{Gert-Martin Greuel}
   \address{Universit\"at Kaiserslautern\\
     Fachbereich Mathematik\\
     Erwin--Schr\"odinger--Stra\ss e\\
     D --- 67663 Kaiserslautern\\
     Tel. +496312052850\\
     Fax +496312054795
     }
   \email{greuel@mathematik.uni-kl.de}
   \urladdr{http://www.mathematik.uni-kl.de/\textasciitilde greuel}
   \author{Thomas Markwig}
   \address{Universit\"at Kaiserslautern\\
     Fachbereich Mathematik\\
     Erwin--Schr\"odinger--Stra\ss e\\
     D --- 67663 Kaiserslautern\\
     Tel. +496312052732\\
     Fax +496312054795
     }
   \email{keilen@mathematik.uni-kl.de}
   \urladdr{http://www.mathematik.uni-kl.de/\textasciitilde keilen}


   \subjclass{Primary 58K50, 14B05, 32S10, 32S25, 58K40}

   \date{\today}

   \keywords{Hypersurface singularities, finite determinacy, Milnor
     number, Tjurina number, normal forms, semi-quasihomogeneous, inner Newton non-degenerate.}
     
   \begin{abstract}
     The main purpose of this article is to lay the foundations for a
     classification of isolated hypersurface singularities in positive
     characteristic. Although our article is in the spirit of Arnol'd
     who classified real an complex hypersurfaces in the 1970's with
     respect to right equivalence, several new phenomena occur in
     positive characteristic. Already the notion of isolated
     singularity is different for right resp.\ contact equivalence
     over fields of characteristic other than zero. The heart of this
     paper consists of the study of different notions of
     non-degeneracy and the associated piecewise filtrations induced
     by the Newton diagram of a power series $f$. We introduce the
     conditions \AC\ and \AAC\ which modify and generalise the
     conditions \Aa\ and \AAa\ of Arnol'd resp.\  Wall and which allow the
     classification with respect to contact equivalence in any
     characteristic. Using this we deduce normal forms and rather
     sharp determinacy bounds for $f$ with respect to right and
     contact equivalence. We apply this to
     classify hypersurface singularities of low modality in positive
     characteristic. 
   \end{abstract}

   \maketitle

   \section{Introduction}

   \lang{
     Throughout this paper $\K$ shall be an algebraically closed field of
     arbitrary characteristic unless explicitly stated otherwise. By
     \begin{displaymath}
       \Kx=\K[[x_1,\ldots,x_n]]= 
       \left\{\sum_{\alpha\in\N^n}a_\alpha\cdot \ux^\alpha\;\Big|\;a_\alpha\in\K\right\}
     \end{displaymath}
     we denote the formal power series ring over $\K$ in $n\geq 2$
     indeterminates $x_1,\ldots,x_n$ using the usual multiindex notation
     $\ux^\alpha=x_1^{\alpha_1}\cdots x_n^{\alpha_n}$ for
     $\alpha=(\alpha_1,\ldots,\alpha_n)\in\N^n$. Moreover, we denote by
     \begin{displaymath}
       \m=\langle x_1,\ldots,x_n\rangle\ideal \Kx
     \end{displaymath}
     the unique maximal ideal of $\Kx$, so that the set of units in
     $\Kx$ is $\Kx^*=\Kx\setminus\m$.
   }
   \kurz{
     Throughout this paper $\K$ denotes an algebraically closed field of
     arbitrary characteristic unless explicitly stated otherwise. By
     $\Kx=\K[[x_1,\ldots,x_n]]$, $n\geq 1$,
     we denote the formal power series ring over $\K$, and by
     $\m=\langle x_1,\ldots,x_n\rangle$ 
     the maximal ideal of $\Kx$.
   }

   In
   the classification of power series $f\in\Kx$ there are two natural
   equivalence relations, right equivalence and contact equivalence.
   Two power series $f,g\in\Kx$ are
   \emph{right equivalent} \lang{if and only} if there is an
   automorphism $\varphi\in\Aut(\Kx)$ such that $f=\varphi(g)$, and we
   denote this by $f\sim_r g$. 
   \lang{
     If we replaced $\K$ by the complex
     numbers and formal power series by convergent ones then $\varphi$
     would induce an isomorphism of the zero fiber of $f$ as well as of
     close by fibers. That is how we should interpret 
     right equivalence also in this more general setting.

   If we are only interested in the geometry of the zero fiber, then
   the second equivalence relation is the appropriate one. } 
   We call $f,g\in\Kx$ 
   \emph{contact equivalent} \lang{if and only} if there is an
   automorphism $\varphi\in\Aut(\Kx)$ and a unit $u\in\Kx^*$ such that
   $f=u\cdot \varphi(g)$, and we
   denote this by $f\sim_c g$. 
   \lang{The idea here is, that $\varphi$ and
   $u$ still induce an isomorphism of the zero fibers of $f$ and $g$. 

   However, we have to replace the geometric notion of the zero fiber
   by the algebraic counterpart of its coordinate ring. That is, for a
   power series $f\in\Kx$ we call $R_f=\Kx/\langle f\rangle$ the
   induced \emph{hypersurface singularity}. 
   We obviously have
     \begin{displaymath}
       f\sim_c g\;\;\;\Longleftrightarrow\;\;\; R_f\cong R_g,
     \end{displaymath}
     i.e.\ $f$ and $g$ are contact equivalent if and only if the
     induced hypersurface singularities are isomorphic as local analytic
     $\K$-algebras.}
   \kurz{Note that $f,g\in\Kx$ are contact equivalent
     if and only if their associated \emph{hypersurface singularities}
     $R_f=\Kx/\langle f\rangle$ and $R_g=\Kx/\langle g\rangle$ are
     isomorphic as local analytic $\K$-algebras.}

   \lang{
   Over the complex numbers we would say that the origin is an
   isolated singular point of $f$ if $f$ is not singular at any point
   close-by, i.e.\ the origin is the only common zero of the partial
     derivatives of $f$. We have to reformulate this algebraically so 
   that it works over any field.} 
   For a power series $f\in\Kx$ we
   denote by
   \bmath
     \jj(f):=\langle f_{x_1},\ldots,f_{x_n}\rangle \unideal  \Kx
   \emath
   the \emph{Jacobian ideal} of $f$, \lang{i.e.\ the ideal} generated by the
   partial derivatives of $f$, and we call the associated algebra
   \bmath
     M_f:=\Kx/\jj(f)
   \emath
   the \emph{Milnor algebra} of $f$ and its dimension
   \bmath
     \mu(f):=\dim_\K(M_f)
   \emath
   the \emph{Milnor number} of $f$. We then call \lang{the origin an
   isolated singular point of $f$, or we simply call } $f$ an
   \emph{isolated singularity} if $\mu(f)<\infty$, which is 
   equivalent to the existence of a non-negative integer $k$ such that
   $\m^k\subseteq\jj(f)$. 

   \lang{Similarly, over $\C$ we would call the origin an isolated singular
   point of the hypersurface singularity defined by $f$ if this
   hypersurface singularity has no other singular point close-by,
   i.e.\ the origin is the only common zero of $f$ and its partial
   derivatives. Algebraically we thus consider}\kurz{Similarly we define } 
   the \emph{Tjurina ideal}
   \bmath
     \tj(f):=\langle f,f_{x_1},\ldots,f_{x_n}\rangle=\langle f\rangle+\jj(f)\unideal\Kx
   \emath
   of $f$, the associated \emph{Tjurina algebra}
   \bmath
     T_f:=\Kx/\tj(f)
   \emath
   of $f$ and its dimension
   \bmath
     \tau(f):=\dim_\K(T_f),
   \emath
   the \emph{Tjurina number} of $f$. We then call \lang{the origin an
   isolated singular point of the hypersurface singularity $R_f$, or
   we will simply call } $R_f$ an \emph{isolated hypersurface singularity} if
   $\tau(f)<\infty$, or equivalently $\m^k\subseteq \tj(f)$ for some
   $k\geq 0$. 

   It is straight forward to see that the Milnor number is \emph{invariant} under right equivalence and
   the Tjurina number is \emph{invariant} under contact equivalence. 
    
   Our principle interest is the classification of  power series in
   positive characteristic with
   respect to right respectively 
   contact equivalence, where the latter is the same as to say that we
   are interested in classifying hypersurface singularities up to
   isomorphism. In order to have good finiteness conditions at hand we
   restrict to the case that $f$ is an
   isolated singularity for right equivalence respectively 
   that $R_f$ is an isolated hypersurface singularity  for
   contact equivalence. Note that these are two distinct conditions
   in positive characteristic  (see also \cite{BGM10}).

   A first important step in the attempt to classify singularities
   from a theoretical point of view as well as from a practical one is
   to know that the equivalence class is determined by a finite number
   of terms of the power series $f$ and to find the corresponding degree
   bound.  We say that $f$ is \emph{right} respectively
   \emph{contact $k$-determined} if $f$ is
   right respectively contact equivalent to every $g$ which coincides
   with $f$ up to order $k$. 

   In \cite{BGM10} we have shown that $f$ is
   finitely right repectively contact determined if and only if $\mu(f)$
   respectively $\tau(f)$ is finite, and we have shown that
   $2\cdot\mu(f)-\ord(f)+2$ respectively $2\cdot\tau(f)-\ord(f)+2$ is
   an upper bound for the determinacy. 
   Here $\ord(f)$ denotes the order.
   In Corollaries
   \ref{cor:fdre} and \ref{cor:fdce} we show how this degree bound can
   be considerably improved when the singularities satisfy the
   conditions \AAa\ 
   resprectively \AAC\  introduced in Section \ref{sec:normalforms}
   (see also the examples in Section \ref{sec:examples}). 

   Once we know that a finite number of terms of $f$ suffices to
   determine its equivalence class then we would like to determine a
   \emph{normal form} for $f$, i.e.\ an ``efficient'' representative for
   the equivalence class. This is in general a difficult task.
   The first classes of singularitites one comes across are those
   which have a \emph{quasihomogeneous} representative. In
   characteristic zero they are determined by the fact that the Milnor
   number and the Tjurina number coincide (see Theorem
   \ref{thm:saito}). The next more
   complicated classes of singularities are those which have a
   representative with a quasihomogeneous principal part (that 
   governs its topology over the complex numbers), i.e.\ the
   \emph{right semi-quasihomogeneous} \rsqh\  respectively 
   \emph{contact semi-quasihomogeneous} \csqh\  singularities. These are
   considered in Section \ref{sec:sqh}, and among others we show that
   they are indeed isolated (see Proposition \ref{prop:sqh}).
  
   When obtaining normal forms of  power series which are not
   right semi-quasi\-homo\-geneous the only
   known classification method is due to Arnol'd, introduced in
   \cite{Arn75} (see also \cite[Sec.~12.7]{AGV85}) over the
   complex numbers and slightly generalised by Wall in \cite{Wal99}.
   The method generalises semi-quasihomogeneity and requires the principal part
   $\In_P(f)$ of the power series (with 
   respect to some $C$-polytope $P$) to be an \emph{isolated} singularity and
   its Milnor algebra to have a \emph{finite regular basis} -- see
   Section \ref{sec:piecewisefiltration} and \ref{sec:normalforms}\lang{ for
   the notions}. At the heart lies the study of \emph{piecewise
     filtrations} as introduced by Arnol'd \cite{Arn75} and used by
   Kouchnirenko to study non-degeneracy conditions
   \cite{Kou76}. Section \ref{sec:piecewisefiltration} is devoted to
   these.  
   Arnol'd introduced the important condition \Aa\ and Wall noticed
   that the weaker condition \AAa\ is sufficient to get normal forms.
   In Section
   \ref{sec:normalforms} we generalise
   these conditions \lang{both in the strict form \Aa\  of Arnol'd  and in the weak form
   \AAa\  of Wall}  to the
   situation of contact equivalence, calling them \AC\  and \AAC\
   respectively,  and derive normal forms for right
   as well as for contact equivalence in arbitrary characteristic.
   See Theorem~\ref{thm:nfre} and 
   \ref{thm:nfce} and Corollaries~\ref{cor:nfre} and \ref{cor:nfce}.

   The results on normal forms and degree bounds apply to large
   classes of examples. In Corollary \ref{cor:sqh-aa} we show that
   all \rsqh\  singularities satisfy \AAa\  and all
   \csqh\  singularities satisfy \AAC.
   Moreover, we show that all
   \emph{inner Newton non-degenerate} 
   singularities (for the definition see Remark \ref{rem:snnd}) satisfy both \AAa\  and \AAC\  (see Theorem
   \ref{thm:snnd-aa}) which generalises a result of Wall. In Section \ref{sec:examples} we then
   use our results to derive normal forms for singularities of type
   $T_{pq}$, $Q_{10}$, $W_{1,1}$ and $E_7$ in Arnol'd's notation in positive
   characteristic.

   \section{Quasi- and semi-quasihomogeneous singularities}\label{sec:sqh}

   A polynomial $f\in\K[\ux]$, $\ux=(x_1,\ldots,x_n)$, $n\geq 1$, 
   is called \emph{quasihomogeneous} with
   respect to the weight vector $w\in\Z_{>0}^n$ if all monomials 
   $\ux^\alpha$ of $f$ have the same \emph{weighted degree}
   $d:=\deg_w(\ux^\alpha)=w\cdot \alpha=\sum_{i=1}^nw_i\cdot
   \alpha_i$. We say for short that $f$ is \qh\  of type $(w;d)$. 
   By the \emph{Euler formula} a quasihomogeneous polynomial $f$
   of weighted degree $\deg_w(f):=d$ satisfies
   \begin{displaymath}
     d\cdot f=w_1\cdot x_1\cdot f_{x_1}+\ldots+w_n\cdot x_n\cdot f_{x_n},
   \end{displaymath}
   so that
   \begin{displaymath}
     \jj(f)=\tj(f)\;\;\;\mbox{ if }\;\Char(\K)\not|\;d.
   \end{displaymath}
   In particular, for a quasihomogeneous polynomial $f$ in
   characteristic zero the Milnor number and the Tjurina number
   coincide. A famous result of K.\ Saito states that
   over the complex numbers the converse is true as well
   (up to equivalence) (see \cite{Sai71}). One can check that his
   proof generalises to any algebraically 
   closed field of characteristic zero, a statement which we could not
   find in the literature. 

   \begin{theorem}[Saito, \cite{Sai71}]\label{thm:saito}
     Let $\K$ be an algebraically closed field of characteristic zero,
     and suppose that $f\in\Kx$ is an isolated singularity. Then
     the following are equivalent:
     \begin{enumerate}
     \item $f$ is right equivalent to a quasihomogeneous polynomial.
     \item $f$ is contact equivalent to a quasihomogeneous polynomial.
     \item $\mu(f)=\tau(f)$.
     \item $f\in\jj(f)$.
     \end{enumerate}
   \end{theorem}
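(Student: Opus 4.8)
The plan is to follow K.\ Saito's original argument and verify that each step survives the passage from $\C$ to an arbitrary algebraically closed field of characteristic zero. The implications $(1)\Rightarrow(2)$ and the Euler-formula consequence $(2)\Rightarrow(3)$ (via $(1)$, after replacing the quasihomogeneous representative's weighted degree by a unit-adjusted one, noting $\Char(\K)=0$ so $\Char(\K)\nmid d$ always) are essentially formal and will be disposed of quickly, as will $(3)\Rightarrow(4)$: if $\mu(f)=\tau(f)<\infty$ then the surjection $M_f\twoheadrightarrow T_f$ induced by $\jj(f)\subseteq\tj(f)$ is an isomorphism of finite-dimensional vector spaces, hence $\jj(f)=\tj(f)$, i.e.\ $f\in\jj(f)$. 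The real content is $(4)\Rightarrow(1)$.

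For $(4)\Rightarrow(1)$ I would reproduce Saito's construction. Starting from a relation $f=\sum_{i=1}^n a_i f_{x_i}$ with $a_i\in\Kx$, one builds a vector field (derivation) $\delta=\sum_i a_i\partial_{x_i}$ with $\delta(f)=f$. Passing to the linear part, one analyses the action of $\delta$ on $\m/\m^2$; after a formal coordinate change one may assume the linear part of $\delta$ is in Jordan form, and the eigenvalues turn out to be rational and positive. The key algebraic fact is that in a $\Q$-vector space (here the span of the eigenvalues) one can choose the coordinate change so that $\delta$ becomes, up to higher-order error that can be absorbed, the Euler vector field $\sum_i w_i x_i\partial_{x_i}$ for suitable positive integer weights $w_i$; then $\delta(f)=f$ together with the formal analogue of the flow/integration argument forces $f$ to be quasihomogeneous of type $(w;1)$ after rescaling, hence right equivalent to a quasihomogeneous \emph{polynomial} (finite determinacy, which holds since $\mu(f)<\infty$ by Theorem in \cite{BGM10}, truncates the series). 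The point to check carefully is that every place where Saito invokes convergence or the specific field $\C$ — diagonalisation of the linear part, solving the homological equations degree by degree, the formal integration of the vector field — uses only that we are in characteristic zero and that $\Kx$ is complete; none of it needs an absolute value or the Artin approximation machinery beyond formal power series manipulations.

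The main obstacle I anticipate is precisely the spot where one must show the eigenvalues of the linear part of $\delta$ are positive rationals and then realise $\delta$ as an honest Euler field: over $\C$ Saito can argue with the holomorphic flow $e^{t\delta}$ and growth estimates, whereas here I must replace this by a purely formal argument, solving at each order a linear system whose solvability hinges on no eigenvalue-combination $\sum m_i\lambda_i - 1$ vanishing for $m\in\N^n$, $|m|\geq 2$ — a resonance condition that in characteristic zero is automatic once the $\lambda_i$ are known to be positive. Establishing the positivity and rationality of the $\lambda_i$ without analytic input is the delicate step; I would do it by exploiting that $\mu(f)<\infty$ forces $\m^N\subseteq\jj(f)$, which bounds the weights combinatorially, and that the $\K$-algebra $M_f$ is graded by the $\delta$-action, so its finite-dimensionality over a characteristic-zero field pins the eigenvalues down to $\Q_{>0}$. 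Once that is in place, the rest is bookkeeping.
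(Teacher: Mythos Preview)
The paper does not actually prove this theorem: it simply cites \cite{Sai71} and adds the one-line remark preceding the statement that ``one can check that his proof generalises to any algebraically closed field of characteristic zero, a statement which we could not find in the literature.'' Your proposal is therefore already more detailed than anything in the paper, and at the level of strategy it matches exactly what the paper asserts---follow Saito and verify each step is purely formal.

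Your handling of the easy implications is fine. In the sketch of $(4)\Rightarrow(1)$, however, the resonance discussion is garbled. The condition you write, $\sum m_i\lambda_i-1\neq 0$ for $|m|\geq 2$, is not the obstruction to linearising the vector field (that would be $\sum m_i\lambda_i\neq\lambda_j$), and in any case positivity of the $\lambda_i$ does \emph{not} make either condition automatic---take $\lambda_1=\lambda_2=\tfrac12$ and $m=(1,1)$. More to the point, Saito's argument does not proceed by linearising $\delta$ via a non-resonance hypothesis at all. What one actually does is pass from $\delta$ to a derivation whose linear part is the \emph{semisimple} part $S$ of the Jordan decomposition of $\delta_0$ on $\m/\m^2$; this step is purely algebraic over any algebraically closed field of characteristic zero, and uses $\mu(f)<\infty$ only to control the nilpotent and higher-order pieces. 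Once one has a diagonal linear derivation $E=\sum w_i x_i\partial_{x_i}$ with $Ef=f$, the eigenspace decomposition of $\Kx$ under $E$ immediately forces $f$ to be quasihomogeneous of type $(w;1)$---no resonance avoidance is needed, because $E$ is already linear and diagonal. The positivity and rationality of the $w_i$ then follow from $\dim_\K M_f<\infty$, essentially as you indicate. So your skeleton is right, but the mechanism you describe for the linearisation step is not the one that works.
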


   The Milnor and the Tjurina number are important invariants which
   even characterise the singularities for values $0$ and $1$ in any
   characteristic. In fact, let $\ord(f):=\min\{k\;|\;f\in\m^k\}$,
   then,  by the implicit function theorem we see easily for $f\in\m$
   \begin{displaymath}
     \mu(f)=0\;\Longleftrightarrow\;
     \tau(f)=0\;\Longleftrightarrow\;
     \ord(f)=1\;\Longleftrightarrow\;
     f\sim_r x_1\;\Longleftrightarrow\;
     f\sim_c x_1.
   \end{displaymath}
   If $\ord(f)\geq 3$ we have $\mu(f)\geq\tau(f)\geq n+1\geq 2$. If
   $\mu(f)=1$, then $\ord(f)=2$ and we have the following lemma.
   \begin{lemma}
     For $f\in\m$ the following are equivalent:
     \begin{enumerate}
     \item $\mu(f)=1$.
     \item $\tau(f)=1$.
     \item
       \begin{enumerate}
       \item If $\Char(\K)\not=2$, then $f\sim_r x_1^2+\ldots+x_n^2$.
       \item If $\Char(\K)=2$, then $n=2k$ is even and $f\sim_rx_1x_2+\ldots+x_{2k-1}x_{2x}$. 
       \end{enumerate}
     \item The same statement as in (c) for contact equivalence $\sim_c$.
     \end{enumerate}
   \end{lemma}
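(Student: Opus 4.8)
The plan is to reduce everything to the classification of quadratic forms over the algebraically closed field $\K$, using the finite determinacy bound of \cite{BGM10} to discard higher order terms; concretely I would prove the cycle (a)$\Rightarrow$(c)$\Rightarrow$(d)$\Rightarrow$(b)$\Rightarrow$(a). By the remarks preceding the lemma, $\ord(f)=1$ iff $\mu(f)=0$ iff $\tau(f)=0$, while $\ord(f)\ge 3$ gives $\mu(f)\ge\tau(f)\ge 2$; hence each of (a) and (b) forces $\ord(f)=2$. Moreover (b) implies (a): since $f\in\m^2$ one has $\tj(f)=\langle f\rangle+\jj(f)\subseteq\m^2+\jj(f)$, so $\tau(f)=1$, i.e. $\tj(f)=\m$, gives $\m=\m^2+\jj(f)$, whence $\jj(f)=\m$ by Nakayama's lemma, i.e. $\mu(f)=1$. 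Thus it suffices to start from (a): as $\mu(f)=1$ and $\ord(f)=2$, the determinacy bound $2\mu(f)-\ord(f)+2=2$ shows $f$ is right $2$-determined, hence right equivalent to its $2$-jet $q$ (as $f-q\in\m^3$), which is a quadratic form because $\ord(f)=2$, with $\mu(q)=\mu(f)=1$ by right invariance of the Milnor number.

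The classification of quadratic forms $q$ with $\mu(q)=1$ is now pure linear algebra. By Nakayama, $\jj(q)=\m$ is equivalent to the linear forms $q_{x_1},\dots,q_{x_n}$ spanning $\m/\m^2$, i.e. to invertibility of the Hessian $H=(q_{x_ix_j})$. If $\Char(\K)\ne 2$ this says precisely that $q$ is a nondegenerate quadratic form, so diagonalising $q$ and rescaling the variables by square roots (available since $\K$ is algebraically closed) gives $q\sim_r x_1^2+\dots+x_n^2$. If $\Char(\K)=2$ then $H$ is alternating (symmetric with zero diagonal), so $H$ invertible forces $n=2k$ even and makes the polar form $b_q(x,y)=q(x+y)-q(x)-q(y)$ nondegenerate. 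I would then run a symplectic-type induction: starting from vectors $v_1,v_2$ with $b_q(v_1,v_2)=1$, correct them by suitable multiples of one another so as to obtain a pair $w_1,w_2$ with $q(w_1)=q(w_2)=0$ and $b_q(w_1,w_2)=1$ — this amounts to solving a quadratic equation in the correction coefficient, which has a root because $\K$ is algebraically closed — and then, since $b_q$ vanishes between $\langle w_1,w_2\rangle$ and its $b_q$-orthogonal complement, $q$ splits as the orthogonal sum of $x_1x_2$ and a quadratic form in $n-2$ variables whose polar form is again nondegenerate; induction gives $q\sim_r x_1x_2+\dots+x_{2k-1}x_{2k}$. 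This proves (a)$\Rightarrow$(c).

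Finally, (c)$\Rightarrow$(d) is immediate (take the unit $u=1$), and (d)$\Rightarrow$(b) holds because in each normal form the partial derivatives already generate $\m$, so $\tj=\m$ and $\tau=1$, and $\tau$ is invariant under contact equivalence; hence if $f$ is contact equivalent to one of the normal forms then $\tau(f)=1$. Together with (b)$\Rightarrow$(a) above this closes the cycle, so (a)--(d) are all equivalent. I expect the only genuinely delicate point to be the characteristic $2$ part of the second step: one must know that over an algebraically closed field of characteristic $2$ a quadratic form with nondegenerate polar form is equivalent to $x_1x_2+\dots+x_{2k-1}x_{2k}$ — in particular that $n$ is necessarily even — which is exactly where the Arf-invariant obstruction would normally enter and is the one place where the argument departs from the classical characteristic-zero Morse lemma; the rest is routine.
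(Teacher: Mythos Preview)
Your argument is correct. The cycle (a)$\Rightarrow$(c)$\Rightarrow$(d)$\Rightarrow$(b)$\Rightarrow$(a) is complete: the Nakayama step for (b)$\Rightarrow$(a) is clean, the use of the determinacy bound $2\mu(f)-\ord(f)+2=2$ from \cite{BGM10} to reduce to the $2$-jet is exactly right, and the classification of quadratic forms with invertible Hessian over an algebraically closed field is carried out correctly in both characteristics. In particular your symplectic induction in characteristic~$2$ is sound: the correction equations $\beta t^2+t+\alpha=0$ (respectively the linear equation when $\beta=0$) are solvable because $\K$ is algebraically closed, and this is precisely where the Arf obstruction would otherwise enter.

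The paper itself does not give an argument at all: its proof is simply a citation of \cite[3.5, Prop.~3]{GK90}. Your route is therefore genuinely different, and in a useful way --- it is self-contained within the framework the paper has already set up (the determinacy bound of \cite{BGM10} and the remarks immediately preceding the lemma), whereas the paper sends the reader to an external source. The cost is that you must do the characteristic-$2$ quadratic-form classification by hand, but as you note this is the only delicate point and you have handled it.
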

   \begin{proof}
     This follows from \cite[3.5, Prop.~3]{GK90}.
   \end{proof}

   The class
   of \emph{quasihomogeneous singularities}, i.\ e. of
   singularities with a quasihomogeneous polynomial representative under right (or contact)
   equivalence, is an important class of singularities in
   characteristic zero. 
   
   In positive characteristic we have to be more careful, since the
   Euler relation is not helpful when the characteristic divides the
   weighted degree. E.g.\ $f=x^p+y^{p-1}$ is quasihomogeneous of
   degree $p\cdot (p-1)$ with respect to $w=(p-1,p)$ with
   $\tau(f)=p\cdot (p-2)$ and $\mu(f)=\infty$ if $\Char(\K)=p$. However, when the
   characteristic does not divide the weighted degree some of the good
   properties still hold true.

   \begin{proposition}\label{prop:qh}
     Let $f\in\K[\ux]\setminus \K$ be \qh\  of type $(w;d)$ with $\gcd(w_1,\ldots,w_n)=1$.
     \begin{enumerate}
     \item If $f\in\m^3$ then
       \begin{displaymath}
         \mu(f)<\infty\;\;\;\Longleftrightarrow\;\;\;\tau(f)<\infty\text{ and }\Char(\K)\not|\;d.
       \end{displaymath}
       In this case obviously $\mu(f)=\tau(f)$.
     \item If $\Char(\K)\not|\;d$ and $g\in\Kx$, then
       \begin{displaymath}
         f\;\sim_r\; g\;\;\;\Longleftrightarrow\;\;\;f\;\sim_c\;g.
       \end{displaymath}
     \end{enumerate}
   \end{proposition}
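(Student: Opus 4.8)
For part (a) the plan is to exploit the Euler relation $d\cdot f = \sum_i w_i x_i f_{x_i}$, which is available for any quasihomogeneous polynomial regardless of characteristic. If $\Char(\K)\nmid d$, this relation shows $f\in\jj(f)$, hence $\tj(f)=\jj(f)$ and therefore $\mu(f)=\tau(f)$; in particular $\mu(f)<\infty \Leftrightarrow \tau(f)<\infty$. For the converse direction I would argue contrapositively: suppose $\Char(\K)=p$ divides $d$. Then each partial derivative $f_{x_i}$ is again quasihomogeneous of type $(w; d-w_i)$, and I want to produce a common zero of all the $f_{x_i}$ other than the origin, which forces $\mu(f)=\infty$. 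The key observation is that $f$ itself, being quasihomogeneous of degree $d\equiv 0$, satisfies: differentiating a monomial $\ux^\alpha$ of $f$ gives $\alpha_i\ux^{\alpha-e_i}$, and summing $w_i$ times these recovers $d\ux^\alpha=0$. More usefully, one shows directly that on the hypersurface $\{f=0\}\subset\K^n$ (which is nonempty of positive dimension since $n\geq 2$ and $f\in\m^3$ is not a unit — here one needs $\K$ algebraically closed to get a nontrivial zero, and the cone structure to get a whole punctured line of zeros), the scheme $V(\jj(f))$ cannot be finite: if it were, then by graded Nullstellensatz-type reasoning $\sqrt{\jj(f)}=\m$, so some power of each $x_i$ lies in $\jj(f)$; but then $f\in\jj(f)$ would follow from a weighted-degree bookkeeping argument, contradicting that $p\mid d$ obstructs the Euler identity from giving this (this last implication is the delicate point — see below).

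For part (b), assume $\Char(\K)\nmid d$. The direction $f\sim_r g\Rightarrow f\sim_c g$ is trivial since right equivalence is a special case of contact equivalence (take the unit $u=1$). For the nontrivial direction, suppose $f = u\cdot\varphi(g)$ with $\varphi\in\Aut(\Kx)$ and $u\in\Kx^*$. Replacing $g$ by $\varphi(g)$ we may assume $f = u\cdot g$ with $u$ a unit. The goal is to absorb $u$ into a coordinate change. Here I would use the standard trick of the Euler vector field: because $\Char(\K)\nmid d$, the quasihomogeneous structure lets us integrate the flow of a suitable vector field. More precisely, consider the path $f_t = ((1-t) + t u)\cdot g$ for $t\in[0,1]$ connecting $g$ to $f$; one shows via the homotopy method (solving $\dot f_t = X_t(f_t)$ for a vector field $X_t\in\m\cdot\mathrm{Der}(\Kx)$) that all $f_t$ are right equivalent, where solvability of the homological equation $\dot u\cdot g = X_t((1-t+tu)g)$ reduces to $\dot u\cdot g \in \langle g\rangle + \mathfrak{m}\cdot\jj((1-t+tu)g)$, and using $g\in\jj(g)$ (Euler, since $\Char(\K)\nmid d$) together with $\tj(g)=\jj(g)$ this membership holds. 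Formal integrability of the vector field in $\Kx$ follows since $X_t\in\m\cdot\mathrm{Der}$, so the flow converges in the $\m$-adic topology.

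The main obstacle I anticipate is the reverse implication in part (a): showing that $\tau(f)<\infty$ together with $\Char(\K)\nmid d$ is \emph{necessary} for $\mu(f)<\infty$, i.e. that $p\mid d$ forces $\mu(f)=\infty$ even when $\tau(f)<\infty$. The example $f=x^p+y^{p-1}$ in the text shows this phenomenon is real. The cleanest route is probably: if $\mu(f)<\infty$ then $\m^k\subseteq\jj(f)$ for some $k$, so in the graded ring the $f_{x_i}$ form a (homogeneous w.r.t. $\deg_w$) system of parameters, hence a regular sequence since $\Kx$ is Cohen--Macaulay; then a weighted Bezout / Koszul computation expresses the socle and forces a relation that, combined with $f$ being quasihomogeneous of degree $d$, yields $f\in\jj(f)$ — but $f\in\jj(f)$ for a quasihomogeneous $f$ of degree $d$ with $p\mid d$ can be shown impossible by comparing the quasihomogeneous degree-$d$ parts, since any element of $\jj(f)$ of weighted degree $d$ is a $\K$-linear combination $\sum c_i x_i f_{x_i}$ with $\deg_w(x_i f_{x_i})=d$, and the only such combination equal to a multiple of $f$ is (a multiple of) the Euler combination $\sum w_i x_i f_{x_i} = d f = 0$, so in fact $f\notin\jj(f)$ unless $f=0$. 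Making the step "regular sequence $\Rightarrow$ $f\in\jj(f)$" precise is the technical heart; it is essentially the positive-characteristic shadow of Saito's theorem and I would look to adapt the Cohen--Macaulay/duality argument behind Theorem \ref{thm:saito} to this graded setting.
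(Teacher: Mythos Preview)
You have identified the right dichotomy---$\mu(f)<\infty$ forces the $f_{x_i}$ to be a regular sequence in the Cohen--Macaulay ring $\Kx$---but you then head in the wrong direction. You try to deduce $f\in\jj(f)$ from regularity and then argue that $p\mid d$ forbids $f\in\jj(f)$. Both steps are shaky: the first you yourself flag as the ``technical heart'' without a proof, and the second is simply false in general (e.g.\ $f=xyz$ in characteristic $3$ with $w=(1,1,1)$ has $p\mid d=3$ yet $f=x\cdot f_x\in\jj(f)$). Your claim that the degree-$d$ part of $\jj(f)$ consists only of combinations $\sum c_i x_i f_{x_i}$ also fails whenever two weights coincide.

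The paper's argument is much more direct and uses the Euler relation the \emph{other} way. If $p\mid d$, Euler reads
\[
\sum_{i=1}^n w_i\, x_i\, f_{x_i}=d\cdot f=0.
\]
Since $\gcd(w_1,\ldots,w_n)=1$, some $w_n$ (say) is a unit mod $p$, so
\[
x_n\cdot f_{x_n}\in\langle f_{x_1},\ldots,f_{x_{n-1}}\rangle.
\]
Because $f\in\m^3$ we have $f_{x_i}\in\m^2$, hence $x_n\notin\langle f_{x_1},\ldots,f_{x_{n-1}}\rangle$, so $f_{x_n}$ is a zero-divisor modulo the others. Thus $(f_{x_1},\ldots,f_{x_n})$ is \emph{not} a regular sequence, and $\mu(f)=\dim_\K \Kx/\jj(f)=\infty$. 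No appeal to Saito-type duality is needed; the hypothesis $f\in\m^3$ enters exactly to ensure $x_n$ survives in the quotient.

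\textbf{Part (b).} Your homotopy argument (``consider the path $f_t$ for $t\in[0,1]$ and integrate a time-dependent vector field'') is a characteristic-zero technique: formally integrating the flow involves dividing by $k!$, which fails for $k\ge p$. The paper instead uses the existence of $d$-th roots of units in $\Kx^*$ when $p\nmid d$ (Hensel's lemma: $T^d-u$ has a simple root mod $\m$ since $d\,c_0^{d-1}\ne 0$). Given $f=u\cdot\varphi(g)$, reduce to $f=u\cdot g$; take $v\in\Kx^*$ with $v^d=u$ and define $\psi\in\Aut(\Kx)$ by $\psi(x_i)=v^{w_i}x_i$. Since every monomial of $f$ has $w$-degree exactly $d$, one computes $\psi(f)=v^d f=u\cdot f$, hence $f\sim_r g$. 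This is a one-line computation once $v$ exists, and it works in any characteristic not dividing $d$.
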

   \begin{proof}
     \begin{enumerate}
     \item If the characteristic does not divide $d$ and
       $\tau(f)<\infty$ then we are done
       by the Euler formula. Conversely, if $\mu(f)<\infty$ then
       $\tau(f)<\infty$ and we have to show that  the
       characteristic does not divide $d$. Assume the contrary. The
       Euler formula then gives the  identity
       \begin{displaymath}
         w_1\cdot x_1\cdot f_{x_1}+\ldots+w_n\cdot x_n\cdot f_{x_n}=0.
       \end{displaymath}
       Since $\gcd(w_1,\ldots,w_n)=1$ we may assume that $w_n$
       is not divisible by the characteristic, and we thus deduce
       \begin{displaymath}
         x_n\cdot f_{x_n}=-\frac{w_1}{w_n}\cdot x_1\cdot
         f_{x_1}-\ldots-\frac{w_{n-1}}{w_n}\cdot x_{n-1}\cdot
         f_{x_{n-1}}.
       \end{displaymath}
       $f$ being in $\m^3$ the variable $x_n$ is not zero in $M_f=\Kx/\jj(f)$, so
       that $f_{x_n}$ is a zero divisor in the $\Kx/\langle
       f_{x_1},\ldots,f_{x_{n-1}}\rangle$. Thus
       $f_{x_1},\ldots,f_{x_n}$ is not a regular sequence in the
       Cohen-Macaulay ring $\Kx$, and therefore the $\K$-algebra $M_f$
       is not zero-dimensional\lang{ (see e.g.\ \cite[Corollary
       B.8.3]{GLS07})}, i.e.\ we get the contradiction $\mu(f)=\infty$.
     \item The proof works as in characteristic zero since $d$-th roots
       exist in $\Kx^*$ if $d$ is not divisible by $\Char(\K)$, see
       e.g.\ \cite[Lemma~2.13]{GLS07}). 
     \end{enumerate}
   \end{proof}

   Note that the condition $f\in\m^3$ cannot be avoided\kurz{.}\lang{, though as
     seen in the proof it can be weakened to e.g.\
     \begin{displaymath}
       \exists\;i=1,\ldots,n\;:\Char(\K)\not|\;w_i\mbox{ and }\;x_i\not\in\langle
       f_{x_1},\ldots,f_{x_{i-1}},f_{x_{i+1}},\ldots,f_{x_n}\rangle.
     \end{displaymath}
     To see that we cannot avoid it completely }
   \kurz{ To see this } consider
   $f=xy\in\K[[x,y]]$ with $\Char(\K)=2$. It is \qh\  of type
   $\big((1,1);2\big)$ and the Milnor number is one, yet the
   characteristic divides the weighted degree.

   \medskip

   When classifying singularities with respect to right or contact
   equivalence the first classes one comes across
   have quasihomogeneous representatives. This is maybe the most
   important reason why they deserve attention. The next more
   complicated class of singularities are those which have a
   quasihomogeneous principal part that somehow governs its discrete
   part of the classification. 

   For a power series $f=\sum_\alpha a_\alpha\ux^\alpha\in\Kx$ and a
   weight vector $w\in\Z_{>0}^n$ we denote by
   \begin{displaymath}
     \In_w(f)=\sum_{w\cdot\alpha\text{ minimal}} a_\alpha\ux^\alpha
   \end{displaymath}
   the \emph{initial form} or \emph{principal part} of $f$ with
   respect to $w$. We call the power series $f$
   \emph{right semi-quasihomogeneous} \rsqh\  respectively 
   \emph{contact semi-quasihomogeneous} \csqh\  with
   respect to $w$ if $\mu\big(\In_w(f)\big)<\infty$ respectively 
   $\tau\big(\In_w(f)\big)<\infty$. A right resp.~contact equivalence class of
   singularities is called \emph{semi-quasihomogeneous}  if it has
   a semi-quasihomogeneous representative. Note that in characteristic
   zero the notions \rsqh\  and \csqh\  coincide. 
   
   Moreover, in characteristic zero it is known that semi-quasihomogeneous
   singularities are always isolated and that their Milnor number
   coincides with the Milnor number of the principal part\lang{, i.e.\ if
   $\K=\C$ their
   topology is governed by the principal part}. In positive
   characteristic we get an analogous statement.

   \begin{proposition}\label{prop:sqh}
     Let $f\in\Kx$ and $w\in\Z_{>0}^n$.
     \begin{enumerate}
     \item If $\mu\big(\In_w(f)\big)<\infty$ and $d=\deg_w\big(\In_w(f)\big)$, then 
       \begin{displaymath}
         \mu(f)=\mu(\In_w(f))=\left(\frac{d}{w_1}-1\right)\cdot\ldots\cdot\left(\frac{d}{w_n}-1\right)<\infty.
       \end{displaymath}
     \item $\tau(f)\leq\tau\big(\In_w(f)\big)$. 
     \end{enumerate}
     In particular, \lang{semi-quasihomogeneous singularities are
       isolated.}\kurz{if $f$ is \rsqh\ (resp.\ \csqh) then $f$
       (resp.\ $R_f$) is an isolated singularity.}
   \end{proposition}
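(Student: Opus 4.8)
We may assume $f\in\m$; write $f_0:=\In_w(f)$ and $f=f_0+g$ with $\ord_w(g)>d:=\deg_w(f_0)$, where $f_0$ is \qh\ of type $(w;d)$ (a polynomial because $w\in\Z_{>0}^n$). The plan is to pass to the associated graded ring of the \emph{weight filtration} $F_j\Kx=\{h\in\Kx\mid\ord_w(h)\geq j\}$. Since $\m^j\subseteq F_j\subseteq\m^{\lceil j/\max_i w_i\rceil}$, this filtration is cofinal with the $\m$-adic one; hence $\Kx$ is complete and separated for it, and $\gr_F\Kx\cong\K[\ux]$ --- graded by $\deg x_i=w_i$ --- is a polynomial ring, in particular Cohen--Macaulay of dimension $n$. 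For every ideal $I\unideal\Kx$ one has $\gr_F(\Kx/I)\cong(\gr_F\Kx)/\In_w(I)$, where $\In_w(I)$ denotes the ideal of $w$-initial forms; moreover $\dim_\K(\Kx/I)=\dim_\K\big(\gr_F(\Kx/I)\big)$ once $\Kx/I$ is finite dimensional, and $I$ is $\m$-primary as soon as $\In_w(I)$ is. Finally, differentiating a monomial of $w$-degree $m$ by $x_i$ gives $0$ or a monomial of $w$-degree $m-w_i$, so $\ord_w(h_{x_i})\geq\ord_w(h)-w_i$ for all $h$; applied to $g$ this yields $\ord_w(g_{x_i})>d-w_i=\deg_w\big((f_0)_{x_i}\big)$, hence $\In_w(f_{x_i})=(f_0)_{x_i}$ for every $i$ with $(f_0)_{x_i}\neq0$.

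\textbf{Part (a).} Since $\mu(f_0)<\infty$, the ideal $\jj(f_0)\unideal\K[\ux]$ is $\m$-primary, so by Krull's height theorem no $(f_0)_{x_i}$ vanishes and $(f_0)_{x_1},\ldots,(f_0)_{x_n}$ is a homogeneous system of parameters, hence --- $\K[\ux]$ being Cohen--Macaulay --- a regular sequence of degrees $d-w_1,\ldots,d-w_n$. These elements are exactly the $w$-initial forms of $f_{x_1},\ldots,f_{x_n}$, so by the standard fact that a regular sequence of initial forms lifts to a regular sequence (and computes the associated graded of the quotient), $f_{x_1},\ldots,f_{x_n}$ is a regular sequence in $\Kx$ with $\In_w\big(\jj(f)\big)=\jj(f_0)$, i.e.\ $\gr_F M_f\cong M_{f_0}$. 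Hence $\mu(f)=\dim_\K(M_f)=\dim_\K(\gr_F M_f)=\dim_\K(M_{f_0})=\mu(f_0)<\infty$. Moreover $M_{f_0}$ is a graded complete intersection cut out by forms of degrees $d-w_1,\ldots,d-w_n$, so its Hilbert series is $\prod_{i=1}^n\frac{1-t^{d-w_i}}{1-t^{w_i}}$; being a polynomial, its value at $t=1$ equals $\dim_\K(M_{f_0})=\prod_{i=1}^n\frac{d-w_i}{w_i}=\prod_{i=1}^n\big(\tfrac{d}{w_i}-1\big)$.

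\textbf{Part (b), and the consequence.} Here $\tau(f_0)<\infty$ means $\tj(f_0)=\langle f_0,(f_0)_{x_1},\ldots,(f_0)_{x_n}\rangle$ is $\m$-primary in $\K[\ux]$, and every generator of $\tj(f_0)$ lies in $\In_w\big(\tj(f)\big)$: indeed $f_0=\In_w(f)$, every nonzero $(f_0)_{x_i}$ equals $\In_w(f_{x_i})$, and the remaining generators are $0$. Thus $\tj(f_0)\subseteq\In_w\big(\tj(f)\big)$ is $\m$-primary, so $\tj(f)$ is $\m$-primary and $\tau(f)=\dim_\K(T_f)=\dim_\K(\gr_F T_f)=\dim_\K\big(\K[\ux]/\In_w(\tj(f))\big)\leq\dim_\K\big(\K[\ux]/\tj(f_0)\big)=\tau(f_0)<\infty$. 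The final assertion follows at once: if $f$ is \rsqh\ for some $w$ then $\mu(f)<\infty$ by~(a) and $f$ is an isolated singularity, and if $f$ is \csqh\ then $\tau(f)<\infty$ by~(b) and $R_f$ is an isolated hypersurface singularity. The step I expect to be delicate is the lifting of a regular sequence of initial forms through the weight filtration; it is tempting to replace this by upper semicontinuity of the Milnor number, but that is problematic in positive characteristic, whereas the associated-graded argument is characteristic-free --- everything else is routine bookkeeping with weighted orders.
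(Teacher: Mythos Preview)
Your proof is correct and follows a genuinely different route from the paper's. The paper argues by deformation: it substitutes $x_i\mapsto t^{w_i}x_i$, rescales by $t^{-d}$ to obtain $f'\in\K[[\ux,t]]$ with $f'|_{t=0}=\In_w(f)$, proves that $\K[[\ux,t]]$ is flat (hence free) over $\K[[\uz,t]]$ via $z_i\mapsto f'_{x_i}$ because $(f'_{x_1},\ldots,f'_{x_n},t)$ is a regular sequence, and then compares the special fibre $M_{\In_w(f)}$ with the generic fibre over $\K((t))$, which is $M_f$ after the invertible substitution; for part~(b) the analogous sequence with $f'$ adjoined is no longer regular, so the $\K[[t]]$-module may acquire torsion and only the inequality survives. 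You instead stay inside $\Kx$ and exploit the weight filtration directly: the associated graded is the weighted polynomial ring, the partials of $\In_w(f)$ form a homogeneous regular sequence there, and the Valabrega--Valla type lifting lemma gives $\In_w(\jj(f))=\jj(\In_w(f))$, hence $\gr_F M_f\cong M_{\In_w(f)}$; for part~(b) the inclusion $\tj(\In_w(f))\subseteq\In_w(\tj(f))$ immediately yields the inequality. The two arguments are cousins---the paper's one-parameter family is essentially the Rees degeneration attached to your filtration---but the bookkeeping is organised quite differently. Your approach has the bonus of producing the closed formula $\mu=\prod_i(d/w_i-1)$ from the Hilbert series of a graded complete intersection, which the paper outsources to \cite{BGM10}, and your treatment of~(b) via initial ideals is arguably cleaner than chasing torsion. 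The lifting lemma you flag as delicate is indeed the crux, but it is standard; in a final version you should give a precise reference (e.g.\ \cite[Thm.~18.16]{Eis96} together with the fact that the associated graded of the quotient by a sequence whose initial forms are regular is the expected quotient, or the original Valabrega--Valla theorem).
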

   \begin{proof}
     \begin{enumerate}
     \item Let $d$ be the degree of  $\In_w(f)$. Then
       \begin{displaymath}
         f':=\frac{f\big(t^{w_1}x_1,\ldots,t^{w_n}x_n\big)}{t^d}=
         \In_w(f)+t\cdot g'\in\K[[\ux,t]]
       \end{displaymath}
       for some power series $g'\in\K[[\ux,t]]$.
       We can use $f'$ to define the following local
       $\K$-algebra homomorphism
       \begin{displaymath}
         \K[[\uz,t]]\longrightarrow\K[[\ux,t]]:t\mapsto t,z_i\mapsto
         f'_{x_i}.
       \end{displaymath}
       This gives $\K[[\ux,t]]$ the structure of a
       $\K[[\uz,t]]$-algebra, and if we tensorise 
       $\K[[\ux,t]]$ with $\K=\K[[\uz,t]]/\langle\uz,t\rangle$ we get
       \begin{displaymath}
         \;\;\;\;\;\K[[\ux,t]]\otimes_{\K[[\ux,t]]}\K=
         \K[[\ux,t]]/\langle f'_{x_1},\ldots,f'_{x_n},t\rangle
         \cong
         \Kx/\jj(\In_w(f)),
       \end{displaymath}
       which by assumption is a finite dimensional $\K$-vector space
       of dimension $\mu(\In_w(f))$. Since
       $(f'_{x_1},\ldots,f'_{x_n},t)$ is a regular sequence, 
       $\K[[\ux,t]]$ is flat as a $\K[[\uz,t]]$-module \lang{(see
       e.g.\ \cite[Theorem~18.16]{Eis96})}
       and thus it is free of rank $\mu\big(\In_w(f)\big)$ by Nakayama's
       Lemma. Tensoring with $\K[[t]]=\K[[\uz,t]]/\langle \uz\rangle$
       we get that
       \begin{equation}\label{eq:sqh:1}
         \K[[\ux,t]]\otimes_{\K[[\uz,t]]}\K[[\uz,t]]/\langle\uz\rangle
         \cong \K[[\ux,t]]/\langle f'_{x_1},\ldots,f'_{x_n}\rangle
       \end{equation}
       is a free $\K[[t]]$-module of rank $\mu\big(\In_w(f)\big)$. 
       
       Passing to the field of fractions $\LL=\K((t))$ of $\K[[t]]$ we have the
       isomorphism of local $\LL$-algebras
       \bmath
         \varphi:\LL[[\ux]]\longrightarrow\LL[[\ux]]:x_i\mapsto t^{w_i}x_i.
       \emath
       Moreover
       \bmath
         f'=\frac{\varphi(f)}{t^d},
       \emath
       so that in $\LL[[\ux]]$ we have the equality of ideals
       \bmath
         \langle f'\rangle=\langle \varphi(f)\rangle
         \lang{\;\;\;}\text{ and }\lang{\;\;\;}
         \jj(f')=\jj\big(\varphi(f)\big)=\varphi\big(\jj(f)\big).
       \emath
       Extending scalars in \eqref{eq:sqh:1} to the field of fractions
       $\LL$ we get an isomorphism of $\LL$-vector spaces
       \begin{displaymath}
         \K[[\ux,t]]/\langle f'_{x_1},\ldots,f'_{x_n}\rangle\otimes_{\K[[t]]}\LL
         \cong \LL[[\ux]]/\jj(f')
         \cong \LL[[\ux]]/\jj(f).
       \end{displaymath}
       By freeness the left hand side is of dimension
       $\mu\big(\In_w(f)\big)$ while the right hand side has dimension
       $\mu(f)$. For the formula for $\mu(\In_w(f))$ see \cite[Prop.~3.8]{BGM10}.
     \item It suffices to consider the case
       $\tau\big(\In_w(f)\big)<\infty$, and the proof then is similar to (a), using the map
       \begin{displaymath}
         \K[[\uz,t]]\longrightarrow\K[[\ux,t]]:t\mapsto t,z_0\mapsto
         f', z_i\mapsto f'_{x_i}
       \end{displaymath}
       for $i=1,\ldots,n$, where $\uz=(z_0,\ldots,z_n)$. Since
       $(f',f'_{x_1},\ldots,f'_{x_n},t)$ is not a regular sequence,
       $\K[[\ux,t]]/\langle f',f'_{x_1},\ldots,f'_{x_n}\rangle$ is
       finitely generated but may have torsion as a
       $\K[[t]]$-module. Tensoring this module with $\K$ over
       $\K[[t]]$ gives a $\K$-vector space of dimension
       $\tau(\In_w(f))$. Tensoring with $\LL=\K((t))$ over $\K[[t]]$
       kills the torsion and gives an $\LL$-vector space of dimension
       $\tau(f)\leq \tau(\In_w(f))$.
     \end{enumerate}
   \end{proof}

   Note that the condition on the finiteness of $\mu\big(\In_w(f)\big)$ in
   Proposition~\ref{prop:sqh} (a) cannot be avoided, and $\tau(f)$
   will in general not coincide with $\tau\big(\In_w(f)\big)$.  
   Moreover, if $f\in\m^3$ we get from Proposition \ref{prop:qh} that
   $\mu(\In_w(f))<\infty$ is equivalent to $\tau(\In_w(f))<\infty$ and
   $\Char(\K)\nmid d=\deg_w(\In_w(f))$. 

   \lang{ 
     Consider $f=x^7+x^6y+y^4\in\K[[x,y]]$ with $\Char(\K)=7$. $f$ is
     \csqh\  with principal part $\In_w(f)=x^7+y^4$ which is \qh\  of type
     $\big((4,7);28\big)$ with
     \bmath
     \tau\big(\In_w(f)\big)=21>17=\tau(f).
     \emath
     Moreover,
     $\mu\big(\In_w(f)\big)=\infty$, but $\mu(f)=21$. Note, that here of course
     the characteristic of the base field divides the weighted degree
     of $\In_w(f)$.
   }

   \section{Piecewise filtrations and graded algebras}\label{sec:piecewisefiltration}

   Fixing a weight vector $w\in\Z_{>0}^n$ we get in a natural way
   a filtration on $\Kx$. If a singularity is semi-quasihomogeneous
   with respect to $w$ then this 
   filtration is perfectly suited to study the singularity and in
   general $w$ singles out a unique facet of the Newton diagram of
   the defining power series.
   However, in general we will have to consider more complicated
   filtrations since there is no single facet of the Newton
   diagram which captures enough information on the singularity. This was noted by
   Arnold and he introduced in \cite{Arn75}
   piecewise filtrations which are used to study non-degeneracy
   conditions by Kouchnirenko in \cite{Kou76}. 

   Given weight vectors $w_i\in\Q_{>0}^n$ with positive entries,
   $i=1,\ldots,k$, they define linear functions
   \begin{displaymath}
     \lambda_i:\R^n\longrightarrow\R:r\mapsto w_i\cdot r:=\sum_{j=1}^n
     w_{i,j}\cdot r_j,
   \end{displaymath}
   and their minimum defines a convex piecewise linear function
   \begin{displaymath}
     \lambda:\R^n\longrightarrow\R:r\mapsto\min\{\lambda_1(r),\ldots,\lambda_k(r)\}.
   \end{displaymath}
   We will always assume that the set of weights is
   \emph{irredundant}, i.e. that none of the $\lambda_i$ is superfluous
   in the definition of $\lambda$.
   The set
   \begin{displaymath}
     P_\lambda=\{r\in\R_{\geq 0}^n\;|\;\lambda(r)=1\}
   \end{displaymath}
   is a compact rational polytope of dimension $n-1$ in the positive orthant $\R_{\geq 0}^n$,
   and its facets are given by
   \begin{displaymath}
     \Delta_i=\{r\in P_\lambda\;|\;\lambda_i(r)=1\}.
   \end{displaymath}
   $P_\lambda$ has the property that each ray in $\R_{\geq 0}^n$ emanating
   from the origin meets $P_\lambda$ in precisely one point and that
   the region in $\R_{\geq 0}^n$ lying above $P_\lambda$ is convex. Following
   the convention of Wall (see \cite{Wal99}) we call such polytopes
   \emph{$C$-polytopes}. \label{page:cpolytope} Thus, irredundant sets of weight vectors
   define $C$-polytopes. 

   Conversely, given a $C$-polytope $P$ the suitably
   scaled inner normal vectors of its facets define an irredundant set
   of weight vectors such that $P=P_\lambda$ for the  corresponding piecewise
   linear function $\lambda$. We denote by $\lambda_P$ the piecewise linear function
   defined by $P$, and by $\lambda_\Delta$ the linear function
   corresponding to a facet  $\Delta$ of $P$.

   \smallskip

     To each power series $f=\sum_\alpha a_\alpha\ux^\alpha\in\Kx$ we
     can associate its \emph{Newton polyhedron}
     $\Gamma_+(f)$ as the convex hull of the set
     \bmath
       \bigcup_{\alpha\in\supp(f)}\big(\alpha+\R_{\geq 0}^n\big)
     \emath
     where $\supp(f)=\{\alpha\;|\;a_\alpha\not=0\}$ denotes the
     support of $f$. \lang{This is an unbounded polyhedron in $\R^n$.}
     Following Arnol'd we call
     the union $\Gamma(f)$ of its compact faces the \emph{Newton
       diagram} of $f$, some authors call it the \emph{Newton polytope}
     resp.~ the \emph{Newton polygon} if $n=2$. Note that the Newton diagram of a \qh\ or
     \sqh\ polynomial has exactly one facet, where a facet is a
     face of dimension $n-1$. For later use we denote by $\Gamma_-(f)$ the
     union of line segments joining points on $\Gamma(f)$ with the
     origin. (See Figure~\ref{fig:np} for an example.)      

     \begin{figure}[h]
       \centering
       \setlength{\unitlength}{0.35mm}
       \begin{tabular}{ccc}
         \begin{picture}(100,80)
           \put(10,10){\line(1,0){100}}
           \put(10,20){\line(1,0){100}}
           \put(10,30){\line(1,0){100}}
           \put(10,40){\line(1,0){100}}
           \put(10,50){\line(1,0){100}}
           \put(10,60){\line(1,0){100}}
           \put(10,70){\line(1,0){100}}         
           \put(10,10){\line(0,1){70}}
           \put(20,10){\line(0,1){70}}
           \put(30,10){\line(0,1){70}}
           \put(40,10){\line(0,1){70}}
           \put(50,10){\line(0,1){70}}
           \put(60,10){\line(0,1){70}}
           \put(70,10){\line(0,1){70}}
           \put(80,10){\line(0,1){70}}
           \put(90,10){\line(0,1){70}}
           \put(100,10){\line(0,1){70}}
           \thicklines\drawline[12](20,50)(40,30)
           \thicklines\drawline[12](40,30)(80,10)         
           \linethickness{0.2mm}\scriptsize
           \Thicklines
           \put(20,50){\line(0,1){30}}
           \put(80,10){\line(1,0){30}}
           \put(20,70){\line(1,1){10}}
           \put(20,60){\line(1,1){20}}
           \put(20,50){\line(1,1){30}}
           \put(25,45){\line(1,1){35}}
           \put(30,40){\line(1,1){40}}
           \put(35,35){\line(1,1){45}}
           \put(40,30){\line(1,1){50}}
           \put(46.5,26.5){\line(1,1){52.5}}
           \put(53.5,23.5){\line(1,1){55.5}}
           \put(60,20){\line(1,1){50}}
           \put(66.5,16.5){\line(1,1){43.5}}
           \put(72.5,12.5){\line(1,1){37}}
           \put(80,10){\line(1,1){30}}
           \put(90,10){\line(1,1){20}}
           \put(100,10){\line(1,1){10}}

           \put(20,50){\circle*{4}}
           \put(30,40){\circle*{4}}
           \put(40,30){\circle*{4}}
           \put(80,10){\circle*{4}}
           \put(50,30){\circle{4}}
           \put(35,-7){\mbox{${\Gamma}_{_+}(f)$}}
         \end{picture}
         &
         \begin{picture}(100,80)
           \put(10,10){\line(1,0){90}}
           \put(10,20){\line(1,0){90}}
           \put(10,30){\line(1,0){90}}
           \put(10,40){\line(1,0){90}}
           \put(10,50){\line(1,0){90}}
           \put(10,60){\line(1,0){90}}
           \put(10,70){\line(1,0){90}}
           \put(10,10){\line(0,1){70}}
           \put(20,10){\line(0,1){70}}
           \put(30,10){\line(0,1){70}}
           \put(40,10){\line(0,1){70}}
           \put(50,10){\line(0,1){70}}
           \put(60,10){\line(0,1){70}}
           \put(70,10){\line(0,1){70}}
           \put(80,10){\line(0,1){70}}
           \put(90,10){\line(0,1){70}}
           \thicklines\drawline[12](20,50)(40,30)
           \thicklines\drawline[12](40,30)(80,10)
           \put(20,50){\circle*{4}}
           \put(30,40){\circle*{4}}
           \put(40,30){\circle*{4}}
           \put(80,10){\circle*{4}}
           \put(25,-7){\mbox{ ${\Gamma(f)}$}}
         \end{picture}
         &
         \begin{picture}(100,80)
           \put(10,10){\line(1,0){90}}
           \put(10,20){\line(1,0){90}}
           \put(10,30){\line(1,0){90}}
           \put(10,40){\line(1,0){90}}
           \put(10,50){\line(1,0){90}}
           \put(10,60){\line(1,0){90}}
           \put(10,70){\line(1,0){90}}
           \put(10,10){\line(0,1){70}}
           \put(20,10){\line(0,1){70}}
           \put(30,10){\line(0,1){70}}
           \put(40,10){\line(0,1){70}}
           \put(50,10){\line(0,1){70}}
           \put(60,10){\line(0,1){70}}
           \put(70,10){\line(0,1){70}}
           \put(80,10){\line(0,1){70}}
           \put(90,10){\line(0,1){70}}
           \thicklines\drawline[12](20,50)(40,30)
           \thicklines\drawline[12](40,30)(80,10)
           \thicklines\drawline[12](10,10)(20,50)
           \linethickness{0.2mm}\scriptsize
           \Thicklines
           \put(10,10){\line(1,0){70}}
           \put(70,10){\line(1,1){3.3}}
           \put(60,10){\line(1,1){6.7}}
           \put(50,10){\line(1,1){10}}
           \put(40,10){\line(1,1){13.3}}
           \put(30,10){\line(1,1){16.7}}
           \put(20,10){\line(1,1){20}}
           \put(10,10){\line(1,1){25}}
           \put(13.1,23.1){\line(1,1){17}}
           \put(17,37){\line(1,1){8}}
           \put(20,50){\circle*{4}}
           \put(30,40){\circle*{4}}
           \put(40,30){\circle*{4}}
           \put(80,10){\circle*{4}}
           \put(35,-7)
           {\mbox{ ${\Gamma}_{_{-}}(f)$}}
         \end{picture}
       \end{tabular}
       
       \caption{The Newton diagram of $x\cdot(y^4+xy^3+x^2y^2-x^3y^2+x^6)$.}
       \label{fig:np}
     \end{figure}
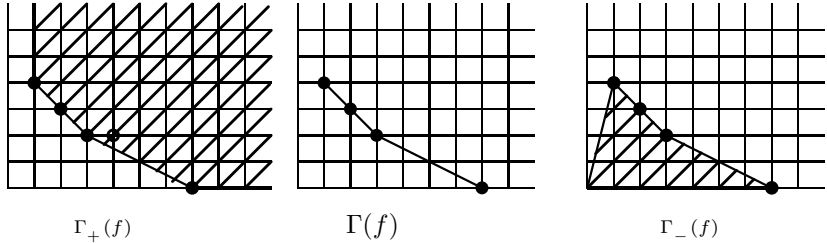

   Note that $\Gamma(f)$ is a $C$-polytope if and only if $f\in\Kx$ is
   a \emph{convenient} power series, i.e.\ if the 
   support of $f$ contains a point on each coordinate axis. $C$-Polytopes should
   thus be thought of as generalising the Newton diagram, and in our
   applications they will basically arise by extending Newton
   diagrams of non-convenient power series in a suitable way.

   For a $C$-polytope $P$ we denote by $N_P$ the lowest common multiple
   of the denominators of all entries in the weight vectors
   corresponding to $P$, so that $N_P\cdot \lambda_P$ takes non-negative
   integer values on $\N^n$. We then define a \emph{piecewise valuation} on $\Kx$ by
   \begin{displaymath}
     v_P(f):=\min\{N_P\cdot \lambda_P(\alpha)\;|\;a_\alpha\not=0\}\in\N
   \end{displaymath}
   for $0\not=f=\sum_{\alpha}a_\alpha \ux^\alpha\in\Kx$ and $v_P(0):=\infty$. $v_P$ satisfies
   \begin{displaymath}
     v_P(f\cdot g)\geq v_P(f)+v_P(g)
     \;\;\;\mbox{ and }\;\;\;
     v_P(f+g)\geq \min\{v_P(f),v_P(g)\}.
   \end{displaymath}
   Indeed we \lang{should like to point out that}\kurz{have}
   \begin{equation}
     \label{eq:vp=vd}
     v_P(f\cdot g)=v_P(f)+v_P(g)
     \;\;\;\Longleftrightarrow\;\;\;
     v_P(f)=v_\Delta(f)\;\;\mbox{ and }\;\;
     v_P(g)=v_\Delta(g)     
   \end{equation}
   for some facet $\Delta$ of $P$\tom{ (see e.g.\
     \cite[Lemma~2.1.22]{Bou09})}. 
   The sets
   \begin{displaymath}
     F_d:=F_{P,d}:=
     \{f\in\Kx\;|\;v_P(f)\geq d\}
   \end{displaymath}
   with $d\in\N$ are thus ideals in $\Kx$ and satisfy
   \bmath
     F_d\cdot F_e\subseteq F_{d+e},
   \emath
   i.e.\ they form a \emph{filtration} on $\Kx$. Note also that
   $F_0=\Kx$ and $F_1=\m$. Moreover, since all weight vectors
   corresponding to $P$ have only positive entries for each $d$ there
   is a positive integer $m$ such that 
   \begin{equation}
     \label{eq:filtration:1}
     \m^m\subseteq F_d,
   \end{equation}
   and also for any $k$ there is a $d$ such that each monomial of
   valuation degree $d$ has a degree bigger than or equal to $k$,
   i.e.\ such that
   \begin{equation}
     \label{eq:filtration:2}
     F_d\subseteq \m^k.
   \end{equation}
   Given any $C$-polytope $P$ and a power
   series $f\in\Kx$, we call the polynomial
   \begin{displaymath}
     \In_P(f)=\sum_{\stackrel{\lambda_P(\alpha) \text{ minimal}}{\alpha\in\supp(f)}}a_\alpha \ux^\alpha
   \end{displaymath}
   the \emph{initial form} or the \emph{principal part} of $f$ with
   respect to $P$. 
   $f$ is said to be \emph{piecewise homogeneous} \ph\  of
   degree $d\in\Q_{\geq 0}$ with respect to $P$ if $\lambda_P(\alpha)=d$ 
   for all $\alpha\in\supp(f)$. Note that then $f=\In_P(f)$ is a polynomial. 
   The power series $f$ is called 
   \emph{right semi-piecewise homogeneous} \rsph\  respectively
   \emph{contact semi-piecewise homogeneous} \csph\  with respect to $P$ if
   $\mu\big(\In_P(f)\big)<\infty$ respectively $\tau\big(\In_P(f)\big)<\infty$.

   Even though \ph, \rsph\  and \csph\  are straight forward generalisations of
   \qh, \rsqh\  and \csqh\  things get more complicated. One of the
   reasons is 
   that the product of two \ph\  polynomials need no longer be so,
   as  Example~\ref{ex:ph} shows.

   \begin{example}\label{ex:ph}
     Consider the weights $w_1=(1,2)$ and $w_2=(3,1)$ together with
     the polynomials $f=x^7+y^7$ and $g=x$. The
     corresponding $C$-polytope $P$ is the black polygon shown in
     Figure~\ref{fig:cpoly}. 
     \begin{figure}[h]
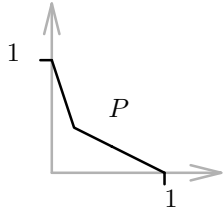

       \centering
       \begin{texdraw}
         \arrowheadtype t:V
         \drawdim cm \relunitscale 0.75 \linewd 0.04 
         \setgray 0.7
         \move (0 0) \avec (3 0)
         \move (0 0) \avec (0 3)
         \lpatt (2 0) \setgray 0
         \move (0 2) \lvec (0.4 0.8) \lvec (2 0)
         \setgray 0
         \move (-0.2 2) \lvec (0 2)
         \move (2 -0.2) \lvec (2 0)
         \htext (-0.8 2){$1$}
         \htext (2 -0.6){$1$}
         \htext (1 1){$P$}
       \end{texdraw}
       \caption{The $C$-polytope to $w_1=(1,2)$ and $w_2=(3,1)$.}
       \label{fig:cpoly}
     \end{figure}
     Both $f$ and $g$ are \ph\  with respect to $P$ of degree $7$
     respectively $1$. However,
     \begin{displaymath}
       \In_P(f\cdot g)=x^8
       \not=x^8+xy^7=f\cdot g\in F_{P,8},
     \end{displaymath}
     so that $f\cdot g$ is no longer piecewise homogeneous.     

     This example shows also that there cannot be any \emph{monomial
       ordering} $>$ (see \cite{GP08})
     which refines the piecewise degree with respect to $P$ if $P$ has
     more than one side. In fact, suppose there is, then either $x^7$ or $y^7$
     is the leading term of $f$. However, since $>$ refines the
     piecewise degree, $xf$ definitely will have
     leading term $x^8$ and $yf$ will have leading term $y^8$, in
     contradiction to the fact that the leading term must be compatible with the
     multiplication by monomials. This makes computations with
     piecewise filtrations difficult, in particular, we cannot use
     Gr\"obner basis methods directly.\hfill$\Box$
   \end{example}


   We also should like to point out, that a polynomial can be \ph\  with
   respect to many different $C$-polytopes. E.g.\ consider for
   $f=x^5+x^2y^2+y^5$ the two $C$-polytopes shown in Figure
   \ref{fig:C-polytopes}.
   \begin{figure}[h]
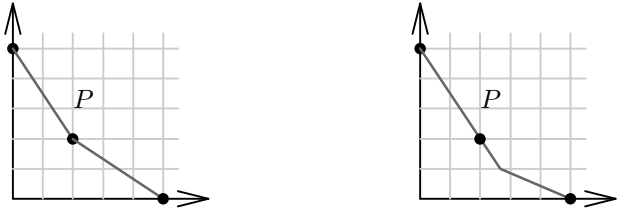

     \centering
     \begin{texdraw}
       \arrowheadtype t:V
       \drawdim cm \relunitscale 0.4 \linewd 0.06
       \setgray 0        
       \avec (6.5 0)
       \move (0 0) \avec (0 6.5)
       \setgray 0.8      
       \move (1 0) \lvec (1 5.5)
       \move (2 0) \lvec (2 5.5)
       \move (3 0) \lvec (3 5.5)
       \move (4 0) \lvec (4 5.5)
       \move (5 0) \lvec (5 5.5)
       \move (0 1) \lvec (5.5 1)
       \move (0 2) \lvec (5.5 2)
       \move (0 3) \lvec (5.5 3)
       \move (0 4) \lvec (5.5 4)
       \move (0 5) \lvec (5.5 5)
       \setgray 0.4
       \linewd 0.08
       \move (0 5) \fcir f:0 r:0.2
       \lvec (2 2)
       \move (2 2) \fcir f:0 r:0.2
       \lvec (5 0)
       \move (5 0) \fcir f:0 r:0.2
       \htext (2 3){$P$}
     \end{texdraw}
     \hspace{2.5cm}
     \begin{texdraw}
       \arrowheadtype t:V
       \drawdim cm \relunitscale 0.4 \linewd 0.06
       \setgray 0        
       \avec (6.5 0)
       \move (0 0) \avec (0 6.5)
       \setgray 0.8      
       \move (1 0) \lvec (1 5.5)
       \move (2 0) \lvec (2 5.5)
       \move (3 0) \lvec (3 5.5)
       \move (4 0) \lvec (4 5.5)
       \move (5 0) \lvec (5 5.5)
       \move (0 1) \lvec (5.5 1)
       \move (0 2) \lvec (5.5 2)
       \move (0 3) \lvec (5.5 3)
       \move (0 4) \lvec (5.5 4)
       \move (0 5) \lvec (5.5 5)
       \setgray 0.4
       \linewd 0.08
       \move (0 5) \fcir f:0 r:0.2
       \lvec (2.66 1)
       \lvec (5 0)
       \move (2 2) \fcir f:0 r:0.2
       \move (5 0) \fcir f:0 r:0.2
       \htext (2 3){$P$}
     \end{texdraw}
     \caption{Two $C$-polytopes w.r.t.\ which $x^5+x^2y^2+y^5$ is \ph.}
     \label{fig:C-polytopes}
   \end{figure}

   If $I\unideal\Kx$ is an ideal in $\Kx$ and $P$ is a $C$-polytope then
   the filtration induced by $P$ on $\Kx$ leads to the filtration
   \begin{displaymath}
     F_0+I/I\;\supseteq\;F_1+I/I\;\supseteq\;F_2+I/I\;\supseteq\;\ldots
   \end{displaymath}
   on $\Kx/I$, and induces thus the \emph{associated graded
     $\K$-algebra}
   \begin{displaymath}
     \gr_P\big(\Kx/I\big)
     =\bigoplus_{d\in\N} (F_d+I)/(F_{d+1}+I)\big)
     \cong\bigoplus_{d\in\N} F_d/\big((I\cap F_d)+F_{d+1}\big).
   \end{displaymath}
   The product of the classes of two monomials $\ux^\alpha$ and
   $\ux^\beta$ in $\gr_P(\Kx/I)$ satisfies
   \begin{equation}\label{eq:prodmon}
     \ux^\alpha\cdot\ux^\beta=\left\{
       \begin{array}[m]{ll}
         \ux^{\alpha+\beta}, & \mbox{ if
         }v_P(\ux^{\alpha+\beta})=v_P(\ux^\alpha)+v_P(\ux^\beta),\\
         0, & \mbox{ else.}
       \end{array}
     \right.
   \end{equation}

   We will show next that there are isomorphisms of vector spaces, 
   \begin{displaymath}
     M_f\cong \gr_P(M_f) 
     \;\;\;\mbox{ respectively }\;\;\;
     T_f\cong \gr_P(T_f),
   \end{displaymath}
   if the graded algebras are finite dimensional.
   Therfore, these graded algebras are natural means to study the
   singularity defined by $f$.  Arnol'd \cite{Arn75} has
   shown how to use a monomial basis of  $\gr_P(M_f)$ under
   suitable conditions on $f$ to compute a normal form for $f$. We
   will generalise this in Section~\ref{sec:normalforms}.

   \begin{proposition}\label{prop:grfinite}\label{prop:regularbasis}
     Let $I\unideal\Kx$ be an ideal and let $P$ be a $C$-polytope.
     \begin{enumerate}
     \item Then 
       \bmath
       \dim_\K(\gr_P(\Kx/I))=\dim_\K(\Kx/I).
       \emath
     \item 
       If $\dim_\K(\gr_P(\Kx/I))<\infty$,
       then any monomial basis of
       $\gr_P(\Kx/I)$ is a basis $\Kx/I$ as $\K$-vector space. 
     \end{enumerate}
   \end{proposition}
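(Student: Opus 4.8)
The plan is to prove both parts simultaneously by producing a vector space isomorphism $\Kx/I \cong \gr_P(\Kx/I)$ and deriving the basis statement from it. First I would reduce to the finite-dimensional case for (a): if $\dim_\K(\Kx/I)=\infty$, then by \eqref{eq:filtration:1} every $\m^m$ contains some $F_d$; combining this with \eqref{eq:filtration:2}, the filtration $\{F_d + I/I\}_d$ is exhaustive and separated (its intersection is $\bigcap_d (F_d+I)/I = 0$ because any element of all $F_d+I$ lies, modulo $I$, in arbitrarily high powers of $\m$ by \eqref{eq:filtration:2}, hence in $\bigcap_k(\m^k+I)/I=0$ by Krull's intersection theorem in the Noetherian local ring $\Kx/I$). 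A separated exhaustive filtration on an infinite-dimensional space forces the associated graded to be infinite-dimensional as well, so both sides are $\infty$ and (a) holds trivially. So assume $\dim_\K(\Kx/I)=d<\infty$; then $\m^k\subseteq I$ for some $k$, hence $F_e\subseteq\m^k\subseteq I$ for some $e$ by \eqref{eq:filtration:2}, so the filtration on $\Kx/I$ is finite: $F_e + I/I = 0$. A finite filtration has $\dim_\K(\Kx/I) = \sum_{j\geq 0}\dim_\K\big((F_j+I)/(F_{j+1}+I)\big) = \dim_\K(\gr_P(\Kx/I))$, which is exactly (a).

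For part (b), suppose $\dim_\K(\gr_P(\Kx/I))<\infty$, so by (a) also $\dim_\K(\Kx/I)<\infty$ and the filtration on $\Kx/I$ is finite as above. Let $\bar m_1,\ldots,\bar m_d$ be monomials whose classes form a basis of $\gr_P(\Kx/I)$, grouped by their filtration degree $v_P$. I claim their images in $\Kx/I$ are a basis. They span: one shows by descending induction on the filtration degree that every class in $F_j + I/I$ is a $\K$-linear combination of the $\bar m_i$ plus an element of $F_{j+1}+I/I$; since the filtration is finite this terminates, giving spanning. They are linearly independent: a nontrivial relation $\sum c_i \bar m_i = 0$ in $\Kx/I$ with not all $c_i$ zero would, taking the minimal filtration degree $j$ occurring among the $\bar m_i$ with $c_i\neq 0$, yield a nontrivial relation among the corresponding classes in $(F_j+I)/(F_{j+1}+I)\subseteq \gr_P(\Kx/I)$, contradicting that the $\bar m_i$ form a basis there. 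Hence the $\bar m_i$ form a $\K$-basis of $\Kx/I$, which is (b).

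The only genuine subtlety is the infinite-dimensional case of (a): one must be careful that the associated graded of a separated exhaustive filtration on an infinite-dimensional space is itself infinite-dimensional. This is where I would invoke \eqref{eq:filtration:1} and \eqref{eq:filtration:2} together with Krull's intersection theorem; concretely, if $\gr_P(\Kx/I)$ were finite-dimensional, then only finitely many graded pieces are nonzero, so $F_N + I = F_{N+1} + I = \cdots$ for $N\gg 0$, forcing $F_N + I = \bigcap_d(F_d+I)$; but $\bigcap_d(F_d+I)\subseteq \bigcap_k(\m^k+I)= I$ by \eqref{eq:filtration:2} and Krull, so $F_N\subseteq I$, whence $\m^m\subseteq F_N\subseteq I$ for suitable $m$ by \eqref{eq:filtration:1}, making $\Kx/I$ finite-dimensional --- a contradiction. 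Everything else is the standard, routine bookkeeping for finite filtrations.
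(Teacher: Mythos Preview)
Your proposal is correct and essentially parallels the paper's argument, with a mild organisational difference worth noting. For (a) both you and the paper reduce to the dichotomy finite/infinite via the chain $F_0+I\supseteq F_1+I\supseteq\cdots$, and both ultimately rely on Krull's intersection theorem to see that $\bigcap_d(F_d+I)=I$; the paper is terse about this point whereas you spell it out carefully. For (b) the paper proves only that a monomial basis of $\gr_P(\Kx/I)$ \emph{spans} $\Kx/I$ (linear independence then follows from the dimension equality in (a)), and does so by iterating the decomposition $f=\sum c_\alpha\ux^\alpha+g_d+h_d$ indefinitely and invoking Krull again to conclude that $f-\sum c_\alpha\ux^\alpha\in\bigcap_k(I+F_k)=I$. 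You instead observe that once (a) is known the induced filtration on $\Kx/I$ is \emph{finite}, so your descending induction terminates without a second appeal to Krull, and you verify linear independence directly via the minimal-degree argument. Your route is a little cleaner; the paper's route has the minor advantage that its spanning argument in (b) does not logically depend on (a).
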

   \begin{proof}
     \begin{enumerate}
     \item The sequence of ideals
       \begin{displaymath}
         \Kx=F_0+I\supseteq F_1+I\supseteq \ldots\supseteq
         F_d+I\supseteq F_{d+1}+I\supseteq\ldots\supseteq I
       \end{displaymath}
       shows that $\dim_K(\Kx/I)<\infty$ if and only if there are only
       finitely many $d$ such that $0<\dim_K(F_d+I/F_{d+1}+I)<\infty$,
       and this is equivalent to $\dim_K\big(\gr_P(\Kx/I)\big)<\infty$. In
       this case the dimensions obviously coincide.
     \item 
       Let $B$ be any set of monomials whose residue classes in
       $\gr_P(\Kx/I)$ form a $\K$-vector space basis. 
       We have to show that the residue classes of the elements of $B$ in
       $\Kx/I$ generate $\Kx/I$ as a $\K$-vector space.
       \\
       Let $f\in\Kx$ be given and let $d=v_P(f)$ be its piecewise
       valuation. We then can write $f$ as
       \begin{displaymath}
         f=\sum_{\stackrel{\ux^\alpha\in B}{v_P(\ux^\alpha)=d}} c_\alpha
         \ux^\alpha + g_d+h_d       
       \end{displaymath}
       with $c_\alpha\in\K$, $g_d\in I\cap F_d$ and $h_d\in F_{d+1}$.
       \\
       We continue with $h_d$ in the same way, and thus for any
       $k\geq d$ there are $c_\alpha\in\K$, $g_k\in I\cap F_k$ and $h_k\in F_{k+1}$ such
       that
       \begin{displaymath}
         f=\sum_{\stackrel{\ux^\alpha\in B}{d\leq
             v_P(\ux^\alpha)\leq k}}c_\alpha \ux^\alpha 
         +(g_d+g_{d+1}+\ldots+g_k) +h_k,
       \end{displaymath}
       where $g=g_d+g_{d+1}+\ldots+g_k\in I$.
       Since $\dim_\K\big(\gr_P(\Kx/I)\big)<\infty$ there is a $d_0$ such that
       \bmath
       (I\cap F_d)+F_{d+1}=F_d\;\;\;\mbox{ for all }\;d\geq d_0,
       \emath
       i.e.\ $B\cap F_{d_0}=\emptyset$.
       For $k\geq d_0$ we thus have
       \begin{displaymath}
         f-\sum_{\stackrel{\ux^\alpha\in B}{d\leq
             v_P(\ux^\alpha)< d_0}}c_\alpha \ux^\alpha 
         =(g_d+g_{d+1}+\ldots+g_k) +h_k\in I+F_{k+1},
       \end{displaymath}
       where the left hand side does not depend on $k$.
       Using Krull's Intersection Theorem \tom{ (see
         e.g.\ \cite[Corollary~10.18]{AM69}) } this shows that
       \begin{displaymath}
         f-\sum_{\stackrel{\ux^\alpha\in B}{d\leq
             v_P(\ux^\alpha)< d_0}}c_\alpha \ux^\alpha \in
         \bigcap_{k\geq 0} (I+F_k)\stackrel{\eqref{eq:filtration:1},\eqref{eq:filtration:2}}{=}
         \bigcap_{k\geq 0} (I+\m^k)=I,
       \end{displaymath}
       and hence the claim.
     \end{enumerate}
   \end{proof}

   If we apply Proposition~\ref{prop:grfinite} to $M_f$ and $T_f$ we get the following
   corollary. 

   \begin{corollary}\label{cor:mutaugr}
     Let $f\in\Kx$ be a power series and $P$ a $C$-polytope.
     \begin{enumerate}
     \item $\mu(f)=\dim_\K(\gr_P(M_f))$.
     \item $\tau(f)=\dim_\K(\gr_P(T_f))$. 
     \end{enumerate}
   \end{corollary}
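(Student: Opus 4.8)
The plan is to obtain both statements as immediate specialisations of Proposition~\ref{prop:grfinite}~(a) to suitable ideals. For part~(a) I would take $I=\jj(f)$, so that by definition $\Kx/I=M_f$, and consequently the associated graded algebra $\gr_P(\Kx/I)$ is exactly $\gr_P(M_f)$. Applying Proposition~\ref{prop:grfinite}~(a) then yields
\begin{displaymath}
  \dim_\K\big(\gr_P(M_f)\big)=\dim_\K(M_f)=\mu(f),
\end{displaymath}
where the last equality is just the definition of the Milnor number. Since the cited proposition is phrased as an equality of (possibly infinite) $\K$-dimensions, this also covers the case $\mu(f)=\infty$, i.e.\ $\mu(f)<\infty$ if and only if $\gr_P(M_f)$ is finite dimensional.

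For part~(b) the argument is verbatim the same with $I=\tj(f)$, which gives $\Kx/I=T_f$, hence $\gr_P(\Kx/I)=\gr_P(T_f)$, and therefore
\begin{displaymath}
  \dim_\K\big(\gr_P(T_f)\big)=\dim_\K(T_f)=\tau(f).
\end{displaymath}

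There is no genuine obstacle at this point: all the substantive work — in particular the use of Krull's Intersection Theorem together with the two boundedness properties \eqref{eq:filtration:1} and \eqref{eq:filtration:2} of the piecewise filtration, which force the filtration on $\Kx/I$ to stabilise in each degree — has already been carried out in the proof of Proposition~\ref{prop:grfinite}. The corollary is merely the translation of that statement into the language of the Milnor algebra $M_f=\Kx/\jj(f)$ and the Tjurina algebra $T_f=\Kx/\tj(f)$, and I would present it as such.
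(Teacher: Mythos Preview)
Your proposal is correct and matches the paper's approach exactly: the corollary is stated immediately after Proposition~\ref{prop:grfinite} with the one-line justification ``If we apply Proposition~\ref{prop:grfinite} to $M_f$ and $T_f$ we get the following corollary.'' One small inaccuracy in your commentary: Krull's Intersection Theorem and the properties \eqref{eq:filtration:1}, \eqref{eq:filtration:2} are used in part~(b) of Proposition~\ref{prop:grfinite}, not in part~(a), which is the only part you invoke here; part~(a) is just the elementary observation that the dimensions of the successive quotients $(F_d+I)/(F_{d+1}+I)$ sum to $\dim_\K(\Kx/I)$.
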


   \medskip

   \lang{
     $v_P$ induces a $\K$-linear decomposition of the polynomial ring
     $\K[\ux]=\bigoplus_{d\geq 0} \K[\ux]_d$ with
     \begin{displaymath}
       \K[\ux]_d=\langle \ux^\alpha\;|\;v_P(\ux^\alpha)=d\rangle_\K,
     \end{displaymath}
     and for any ideal $J\unideal\Kx$ we can consider
     \begin{displaymath}
       J_d=\K[\ux]_d\cap J.
     \end{displaymath}
     Note that in general $J\not=\bigoplus_{d\geq 0}J_d$ if the
     $C$-polytope $P$ has more than one facet, even if $J$ is generated
     as an ideal by piecewise homogeneous elements. See e.g.\ $J=\langle
     \In_P(h\cdot f)\;|\;h\in\K[[x,y]]\rangle$ with $P$ and $f$ as in
     Example~\ref{ex:ph} then it is easy to see that $x\cdot
     f=x^8+xy^7=f_8+f_{10}\in J$ but $f_{10}=xy^7\not\in J$. 
     
     \begin{proposition}\label{prop:grin}
       Let $I\unideal\Kx$ be an ideal in $\Kx$ and let $P$ be a
       $C$-polytope. Then there is a natural isomorphism of $\K$-vector spaces
       \begin{displaymath}
         \gr_P(\Kx/I)\cong\bigoplus_{d\geq 0}\K[\ux]_d/\In_P(I)_d,
       \end{displaymath}
       where $\In_P(I)=\langle \In_P(f)\;|\;f\in I\rangle$ is the
       initial ideal of $I$ with respect to $P$.
     \end{proposition}
     \begin{proof}
       Consider the $\K$-linear map
       \begin{displaymath}
         \varphi_d:\K[\ux]_d\longrightarrow F_d/\big((I\cap
         F_d)+F_{d+1}\big):f\mapsto \overline{f}
       \end{displaymath}
       sending a polynomial $f$ to its residue class. This map is
       obviously surjective, and we claim that
       $\ker(\varphi_d)=\In_P(I)_d$. If $f=\In_P(g)\in\In_P(I)_d$ with
       $g\in I$ then $f-g\in F_{d+1}$ and thus
       $\varphi_d(f)=\overline{g+(f-g)}=0$. If $f\in\ker(\varphi_d)$
       then $f=g+h$ with $g\in I\cap F_d$ and $h\in F_{d+1}$, so that
       $f=\In_P(g)\in\In_P(I)_d$. Thus $\varphi_d$ induces an
       isomorphism as desired.
     \end{proof}
     
     \begin{remark}\label{rem:inP}
       If the $C$-polytope $P$ in Proposition~\ref{prop:grin} has only
       \emph{one} facet, i.e.\ $P$ induces a weighted filtration, then we have
       a natural isomorphism 
       \begin{displaymath}
         \gr_P(\Kx/I)\cong\Kx/\In_P(I).
       \end{displaymath}
       The reason for this is that if $P$ has only one facet then
       \begin{displaymath}
         \In_P(I)=\bigoplus_{d\geq 0}\In_P(I)_d,
       \end{displaymath}
       since a weighted homogeneous polynomial lies in the weighted
       homogeneous ideal $\In_P(I)$ if and only if its weighted
       homogeneous summands belong to $\In_P(I)$.
       The isomorphism is
       thus induced by
       \begin{displaymath}
         \Kx\longrightarrow \gr_P(\Kx/I):f=\sum_d f_d\mapsto \sum_d \overline{f_d},
       \end{displaymath}
       where $f=\sum_d f_d$ is the decomposition of $f$ into its
       weighted homogeneous parts and $\overline{f_d}$ is the the
       residue class of $f_d$ in $F_d/\big((I\cap f_d)+F_{d+1}\big)$. 
       
       This fact can be used to compute a monomial basis for
       $\gr_P(\Kx/I)$. $P$ determines a weight vector $w$ and we can fix
       a local weighted degree ordering with respect to this weight
       vector $w$. If we then compute a standard basis of $I$ with
       respect to this ordering, the $w$-initial forms of the basis
       elements generate $\In_P(I)$. Moreover, we can compute the
       standard monomials of of $\Kx/\In_P(I)$ via the leading ideal and
       they are a monomial basis of both, $\Kx/\In_P(I)$ and of
       $\gr_P(\Kx/I)$.
     \end{remark}
   }

   For any $C$-polytope $P$ the piecewise valuation $v_P$ on $\Kx$ can
   easily be extended to the $\Kx$-module $\Der_{\K}(\Kx)$ of
   derivations on $\Kx$. For this we define
   \begin{displaymath}
     v_P(\xi)=\min\{\lambda_P(\alpha-e_i)\;|\;a_{i,\alpha}\not=0\}\kurz{,
       \text{ where }}
   \end{displaymath}
   \lang{where}
   \begin{displaymath}
     \xi=\sum_{i=1}^n\sum_{\alpha\in\N^n} a_{i,\alpha}\cdot
     \ux^{\alpha} \cdot \partial_{x_i}\not=0
   \end{displaymath}
   and    
   where $e_i$ is the $i$-th standard basis vector of $\Z^n$, i.e.\ we
   naturally extend
   \bmath
     v_P(\ux^\alpha \partial_{x_i})=\lambda_P(\alpha-e_i)
   \emath
   to all derivations where the derivation $\partial_{x_i}$ lowers the exponent of $x_i$
   in $\ux^{\alpha}$ by one. Note that $v_P(\partial_{x_i})$ is negative.

   Straight forward computations show that $v_P$ then satisfies 
   \lang{(see e.g.\ \cite[Lemma~2.2.3]{Bou09})}
   \begin{equation}
     \label{eq:derivation:1}
     v_P(\xi f)\geq v_P(\xi)+v_P(f)
   \end{equation}
   for any $0\not=f\in\Kx$ and any $0\not=\xi\in\Der_{\K}(\Kx)$.
   Moreover (see \lang{\cite[Lemma~2.2.5]{Bou09} or} \cite[Lemma~6.6]{Arn75}), 
   if $f\in\m^2$ and $g_1,\ldots,g_n\in\m$ with $v_P(g_i)>v_P(x_i)$ then
   $\varphi:\Kx\longrightarrow\Kx:x_i\mapsto x_i+g_i$ is an
   isomorphism and
   \begin{equation}
     \label{eq:derivation:2}
     \varphi(f)=f+\xi f+ h
   \end{equation}
   where
   \begin{displaymath}
     \xi=\sum_{i=1}^n g_i\partial_{x_i}
     \;\;\;
     \mbox{ and }
     \;\;\;
     v_P(h)>v_P(\xi)+v_P(f).
   \end{displaymath}

   The fact that we do not always have
   $v_P(\xi f)=v_P(\xi)+v_P(f)$ is somewhat annoying and forces us
   to adapt the filtrations induced by $v_P$ on
   the ideals
   \begin{displaymath}
     \jj(f)=\{ \xi f\;|\;\xi\in\Der_\K(\Kx)\}\kurz{ \text{
         respectively }}
   \end{displaymath}
   \lang{respectively}
   \begin{displaymath}
     \tj(f)=\{g\cdot f+\xi f\;|\;g\in\Kx,\xi\in\Der_\K(\Kx)\}.
   \end{displaymath}
   In the following definitions we will restrict our attention in $\jj(f)\cap F_d$ respectively
   $\tj(f)\cap F_d$ to those elements whose valuation is  \emph{expected} to
   be at least $d$, avoiding those who do so simply by bad luck.

   \kurz{\begin{definition}
     \begin{enumerate}
     \item} 
     For $d\geq 0$ we define the ideals
     \begin{displaymath}
       \jj_P^A(f)_d:=\{ \xi f\;|\; v_P(\xi)+v_P(f)\geq d\}\ideal \Kx
     \end{displaymath}
     respectively
     \begin{displaymath}
       \tj_P^{AC}(f)_d:=\{ g\cdot f+\xi f\;|\; 
       \min\{v_P(g)+v_P(f),v_P(\xi)+v_P(f)\}\geq d\}\ideal \Kx.
     \end{displaymath}
   \kurz{\item }
     Replacing $\jj(f)\cap F_d$ resp.\ $\tj(f)\cap F_d$ in the
     definition of $\gr_P(M_f)$ resp.\ $\gr_P(T_f)$ by $\jj_P^A(f)_d$
     resp.\ $\tj_P^{AC}(f)_d$ we define the graded $\K$-algebras
     \begin{displaymath}
       \gr_P^A(M_f):=\bigoplus_{d\geq 0} F_d\big/\big(\jj_P^A(f)_d+F_{d+1}\big)
     \end{displaymath}
     respectively
     \begin{displaymath}
       \gr_P^{AC}(T_f):=\bigoplus_{d\geq 0} F_d\big/\big(\tj_P^{AC}(f)_d+F_{d+1}\big).
     \end{displaymath}
   \kurz{\item
     A monomial basis of $\gr_P^A(M_f)$ respectively $\gr_P^{AC}(T_f)$ 
     is called a \emph{regular basis} for $M_f$ respectively $T_f$. 
     \end{enumerate}
   \end{definition}}

   \kurz{\begin{remark}
     \begin{enumerate}
     \item} These notions were introduced for $M_f$ and $\jj(f)$
       implicitly by Arnol'd \cite{Arn75} and explicitly by Wall
       \cite{Wal99}. We introduce in this paper the modification to $T_f$ and
       $\tj(f)$ in order to treat contact equivalence. The upper index A
       refers to Arnol'd's condition \Aa\ and AC to our condition \AC\
       (see Definition \ref{def:AAAC}).
     \kurz{\item}    
     We obviously have the inclusions
       \begin{displaymath}
         \tj(f)\cap F_d\supseteq \tj_P^{AC}(f)_d \supseteq  \jj_P^A(f)_d  \subseteq
         \jj(f)\cap F_d,
       \end{displaymath}
       and hence canonical surjections
       \begin{displaymath}
         \gr_P^A(M_f)\twoheadrightarrow\gr_P(M_f),\;\;\;\;
         \gr_P^A(T_f)\twoheadrightarrow\gr_P(T_f).
       \end{displaymath}
     \kurz{\end{enumerate}
   \end{remark}}

   Due to Proposition~\ref{prop:regularbasis} this yields together
   with Corollary \ref{cor:mutaugr}  the
   following result.

   \begin{corollary}\label{cor:regularbasis}
     Let $f\in\Kx$ be a power series
     and let $P$ be a $C$-polytope.
     \begin{enumerate}
     \item Any monomial basis $B$ of $\gr_P^A(M_f)$ generates 
       $\gr_P(M_f)$, and if
       $\mu(f)<\infty$ then $B$ also
       generates $M_f$. In particular,
       \begin{displaymath}
         \mu(f)=\dim_\K(\gr_P(M_f))\leq\dim_K(\gr_P^A(M_f)).
       \end{displaymath}
     \item Any monomial basis $B$ of
       $\gr_P^{AC}(T_f)$ generates $\gr_P(T_f)$, and if
       $\tau(f)<\infty$ then $B$ also
       generates $T_f$. In particular,
       \begin{displaymath}
         \tau(f)=\dim_\K(\gr_P(T_f))\leq\dim_K(\gr_P^A(T_f)).
       \end{displaymath}
     \end{enumerate}     
   \end{corollary}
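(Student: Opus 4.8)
The plan is to reduce everything to the two preceding results, namely Proposition~\ref{prop:regularbasis}(b) (any monomial basis of $\gr_P(\Kx/I)$ is a $\K$-basis of $\Kx/I$ when the former is finite-dimensional) and Corollary~\ref{cor:mutaugr} ($\mu(f)=\dim_\K\gr_P(M_f)$ and $\tau(f)=\dim_\K\gr_P(T_f)$), together with the canonical surjections $\gr_P^A(M_f)\twoheadrightarrow\gr_P(M_f)$ and $\gr_P^{AC}(T_f)\twoheadrightarrow\gr_P(T_f)$ recorded in the Remark. Since the two parts are entirely parallel (replace $\jj$, $M_f$, $\mu$, $\gr_P^A$ by $\tj$, $T_f$, $\tau$, $\gr_P^{AC}$ throughout), I would write out part~(a) and then say ``the proof of (b) is word for word the same''.

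For part~(a), let $B$ be a monomial basis of $\gr_P^A(M_f)$. The surjection $\gr_P^A(M_f)\twoheadrightarrow\gr_P(M_f)$ is a homomorphism of graded $\K$-algebras which, crucially, sends the class of a monomial $\ux^\alpha$ in degree $v_P(\ux^\alpha)$ to the class of the same monomial $\ux^\alpha$ in the same degree of $\gr_P(M_f)$ (it is induced by the inclusion $\jj_P^A(f)_d\subseteq\jj(f)\cap F_d$ of the pieces, so it is the identity on the underlying filtered pieces $F_d$ before quotienting). Hence the images of the elements of $B$ span $\gr_P(M_f)$ as a $\K$-vector space — this is just surjectivity. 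So the monomials in $B$ generate $\gr_P(M_f)$, which is the first assertion. For the numerical inequality, spanning gives $\dim_\K\gr_P(M_f)\le |B|=\dim_\K\gr_P^A(M_f)$, and $\dim_\K\gr_P(M_f)=\mu(f)$ by Corollary~\ref{cor:mutaugr}(a).

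Finally, suppose $\mu(f)<\infty$. Then $\dim_\K\gr_P(M_f)=\mu(f)<\infty$, so Proposition~\ref{prop:regularbasis}(b) applies with $I=\jj(f)$: any \emph{monomial} basis of $\gr_P(M_f)$ is already a $\K$-basis of $M_f$. The only gap to close is that the images of $B$ in $\gr_P(M_f)$ need not themselves be a basis of $\gr_P(M_f)$ — a priori they only span, and $B$ could be larger than $\dim_\K\gr_P(M_f)$ or map non-injectively. But the images of the elements of $B$ under the composite $\gr_P^A(M_f)\twoheadrightarrow\gr_P(M_f)\twoheadrightarrow M_f$ (the second map again being the identity on filtered pieces) span $M_f$ as a $\K$-vector space, directly: any $f\in\Kx$ reduces, degree by degree in the filtration, modulo $\jj_P^A(f)_d\subseteq\jj(f)$ against finitely many monomials of $B$ plus a tail in $F_{d+1}$, and one concludes exactly as in the proof of Proposition~\ref{prop:regularbasis}(b) using that $B\cap F_{d_0}=\emptyset$ for $d_0$ large (which holds because $\gr_P^A(M_f)$, being spanned by the finitely many monomials of $B$ mapping onto the finite-dimensional $\gr_P(M_f)$, has $|B|<\infty$) together with Krull's intersection theorem. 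I expect this last point — making precise that $B$ generates $M_f$ and not merely $\gr_P(M_f)$, i.e.\ rerunning the filtration-reduction argument of Proposition~\ref{prop:regularbasis}(b) with the smaller ideals $\jj_P^A(f)_d$ in place of $\jj(f)\cap F_d$ — to be the only real content; everything else is formal manipulation of the two given surjections.
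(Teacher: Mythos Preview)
Your overall strategy---use the surjection $\gr_P^A(M_f)\twoheadrightarrow\gr_P(M_f)$ together with Proposition~\ref{prop:regularbasis} and Corollary~\ref{cor:mutaugr}---is exactly the paper's, and the first two claims (that $B$ spans $\gr_P(M_f)$ and the dimension inequality) are handled correctly.

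The gap is in your argument that $B$ generates $M_f$ when $\mu(f)<\infty$. You try to rerun the reduction argument of Proposition~\ref{prop:regularbasis}(b) and for this you need $B\cap F_{d_0}=\emptyset$ for some $d_0$, which is equivalent to $|B|<\infty$, i.e.\ to $\dim_\K\gr_P^A(M_f)<\infty$. Your justification for this is circular (you assume ``the finitely many monomials of $B$'' in order to conclude $|B|<\infty$), and in fact the conclusion is \emph{false} in general: Example~\ref{ex:t45} exhibits a case with $\tau(f)=16<\infty$ but $\dim_\K\gr_P^{AC}(T_f)=\infty$, so the corresponding regular basis $B$ is infinite. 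The same phenomenon can occur for $\gr_P^A(M_f)$; the whole point of distinguishing conditions \Aa/\AAa\ from mere finiteness of $\mu$ is that the surjection can have nontrivial (even infinite-dimensional) kernel.

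The fix is shorter than your attempted workaround. Since $\mu(f)<\infty$, Corollary~\ref{cor:mutaugr} gives $\dim_\K\gr_P(M_f)<\infty$. The images of the monomials in $B$ span $\gr_P(M_f)$, so you can extract a \emph{finite} subset $B'\subseteq B$ whose (nonzero) images form a monomial basis of $\gr_P(M_f)$. Now Proposition~\ref{prop:regularbasis}(b) applies directly to $B'$ and $I=\jj(f)$: the monomials in $B'$ form a $\K$-basis of $M_f$. Since $B'\subseteq B$, the set $B$ generates $M_f$. No rerun of the Krull argument is needed.
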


   \lang{Following Arnol'd \cite{Arn75} and Wall \cite{Wal99}, who
   considered this notion for $M_f$,  we call a
   monomial basis of $\gr_P^A(M_f)$ respectively $\gr_P^{AC}(T_f)$ 
   a \emph{regular basis} for $M_f$ respectively $T_f$. }

   We should point out that the finiteness of $\mu(f)$ respectively of
   $\tau(f)$ does \emph{not} suffice in general to guarantee the finite
   dimensionality of $\gr_P^A(M_f)$ respectively of
   $\gr_P^{AC}(T_f)$. The reason for this is that elements of
   valuation $d$ in $\jj(f)$ respectively in $\tj(f)$ may not be
   contained in $\jj_P^A(f)_d$ respectively in $\tj_P^A(f)_d$, as in
   the following example. 

   \begin{example}[$T_{45}$-Singularity in characteristic $2$]\label{ex:t45}
     Let $\Char(\K)=2$ and let the $C$-polytope $P$ be defined by the
     weights $w_1=(4,6)$ and $w_2=(5,5)$. The polynomial
     $f=x^5+x^2y^2+y^4$ is \ph\  of degree $20$ with respect to $P$ with
     Tjurina number $\tau(f)=16$. For $n\geq 4$ we have
     \begin{displaymath}
       y^{4n}=g\cdot f+\xi f\in \tj(f),
     \end{displaymath}
     where $g=y^{4n-4}+x^2y^{4n-6}+x^4y^{4n-8}$ is \qh\  of degree
     $20n-20=v_P(y^4n)-v_P(f)$ and $\xi=(x\cdot g+x^2y^{4n-6})\cdot
     \partial_x$. Thus $g$ 
     guarantees that $y^{4n}$ is indeed in
     $\tj_P^{AC}(f)_{20n}$, however,
     \begin{displaymath}
       v_P(\xi)=v_P(x^2y^{4n-6}\partial_x)=20n-25\;<\;v_P(y^{4n})-v_P(f)
     \end{displaymath}
     has a valuation which is too small. Moreover, we cannot do any
     better, i.e.\
     \bmath
       y^{4n}\not\in\tj_P^{AC}(f)+F_{20n+1}
     \emath
     and thus
     \bmath
       \dim_\K\big(\gr_P^{AC}(T_f)\big)=\infty.
     \emath
     See also Example \ref{ex:t45-2}.
     \hfill$\Box$
   \end{example}

   The following lemma shows that $\gr_P^A(M_f)$ and $\gr_P^{AC}(T_f)$
   depend only on the initial part of $f$ w.r.t.\ the $C$-polytope $P$.

   \begin{lemma}\label{lem:grpp}
     If $P$ is a $C$-polytope and $f\in\Kx$  then
     \begin{displaymath}
       \gr_P^A(M_f)=\gr_P^A(M_{\In_P(f)})
       \;\;\text{ and }\;\;
       \gr_P^{AC}(T_f)=\gr_P^{AC}(T_{\In_P(f)}).
     \end{displaymath}
   \end{lemma}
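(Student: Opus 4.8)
The plan is to compare, degree by degree, the graded pieces of $\gr_P^A(M_f)$ and $\gr_P^A(M_{\In_P(f)})$, using that $f$ and $\In_P(f)$ differ by a power series of strictly larger piecewise valuation. Write $f_0 := \In_P(f)$ and $h := f - f_0$, so $v_P(h) > v_P(f) =: v_P(f_0)$. The $d$-th graded piece of $\gr_P^A(M_f)$ is $F_d/(\jj_P^A(f)_d + F_{d+1})$, so it suffices to show that for every $d$ we have the equality of subspaces of $F_d$
\begin{displaymath}
  \jj_P^A(f)_d + F_{d+1} = \jj_P^A(f_0)_d + F_{d+1}.
\end{displaymath}
By symmetry (the roles of $f$ and $f_0 = f - h$ are interchangeable, since $\In_P(f_0) = f_0$ and $h$ has larger valuation) it is enough to prove one inclusion, say $\jj_P^A(f)_d \subseteq \jj_P^A(f_0)_d + F_{d+1}$. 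The same argument applies verbatim to $\tj_P^{AC}$ once one also tracks the multiplier term $g\cdot f$; I would present the Jacobian case in detail and remark that the Tjurina case is identical.

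The key computation is the following. Take $\xi \in \Der_\K(\Kx)$ with $v_P(\xi) + v_P(f) \geq d$; I must show $\xi f \in \jj_P^A(f_0)_d + F_{d+1}$. Write $\xi f = \xi f_0 + \xi h$. Since $v_P(f_0) = v_P(f)$, the first summand $\xi f_0$ lies in $\jj_P^A(f_0)_d$ by definition. For the second summand, the extension of $v_P$ to derivations satisfies $v_P(\xi h) \geq v_P(\xi) + v_P(h)$ by \eqref{eq:derivation:1}, and $v_P(h) > v_P(f)$ gives
\begin{displaymath}
  v_P(\xi h) \geq v_P(\xi) + v_P(h) > v_P(\xi) + v_P(f) \geq d,
\end{displaymath}
hence $v_P(\xi h) \geq d+1$, i.e. $\xi h \in F_{d+1}$. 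Therefore $\xi f \in \jj_P^A(f_0)_d + F_{d+1}$, which is the desired inclusion. The reverse inclusion is obtained by the symmetric argument: given $\xi$ with $v_P(\xi) + v_P(f_0) \geq d$, write $\xi f_0 = \xi f - \xi h$, note $\xi f \in \jj_P^A(f)_d$ since $v_P(f) = v_P(f_0)$, and $\xi h \in F_{d+1}$ exactly as above.

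The Tjurina case needs one extra observation: an element of $\tj_P^{AC}(f)_d$ has the form $g f + \xi f$ with $\min\{v_P(g) + v_P(f),\, v_P(\xi) + v_P(f)\} \geq d$; then $gf + \xi f = (g f_0 + \xi f_0) + (g h + \xi h)$, where $g f_0 + \xi f_0 \in \tj_P^{AC}(f_0)_d$ and, since $v_P(gh) \geq v_P(g) + v_P(h) > v_P(g) + v_P(f) \geq d$ together with $v_P(\xi h) \geq d+1$ as before, the remainder $gh + \xi h$ lies in $F_{d+1}$. I do not anticipate a genuine obstacle here; the only point requiring care is to invoke \eqref{eq:derivation:1} rather than hoping for the (false in general) equality $v_P(\xi h) = v_P(\xi) + v_P(h)$ — but since we only need the inequality, and $v_P(h)$ is \emph{strictly} larger than $v_P(f)$, the strictness is exactly what buys the one unit of valuation needed to land in $F_{d+1}$. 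Summing over $d$ then gives the claimed equality of graded algebras.
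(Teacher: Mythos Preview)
Your proof is correct and essentially identical to the paper's own argument: the paper also writes $f=\In_P(f)+h$ with $v_P(h)\geq v_P(f)+1$, reduces to showing $\jj_P^A(f)_d+F_{d+1}=\jj_P^A(\In_P(f))_d+F_{d+1}$ for all $d$, and verifies both inclusions via the decomposition $\xi f=\xi\In_P(f)+\xi h$ together with \eqref{eq:derivation:1}, then remarks that the Tjurina case is analogous.
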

   \begin{proof}
     For this we write $f=\In_P(f)+h$ for some $h\in\Kx$ with
     $v_P(h)\geq v_P(f)+1$, and we note that for any
     $\xi\in\Der_\K(\Kx)$
     \begin{equation}\label{eq:grpp:5}
       v_P(\xi h)\geq v_P(\xi)+v_P(h)\geq v_P(\xi)+v_P(f)+1.
     \end{equation}
     In order to show $\gr_P^A(M_f)=\gr_P^A(M_{\In_P(f)})$ we have
     to show
     \begin{equation}\label{eq:grpp:6}
       \jj^A_P(f)_d+F_{d+1}=\jj^A_P\big(\In_P(f)\big)_d+F_{d+1}
     \end{equation}
     for all $d\geq 0$.

     If $g\in\jj^A_P(f)_d$ then there is a derivation $\xi$
     such that $g=\xi f$ with
     \begin{displaymath}
       d\leq v_P(\xi)+v_P(f)=v_P(\xi)+v_P\big(\In_P(f)\big).
     \end{displaymath}
     In view of \eqref{eq:grpp:5} we thus have
     \begin{displaymath}
       g=\xi \In_P(f)+\xi h\in \jj_P^A\big(\In_P(f)\big)_d+F_{d+1},
     \end{displaymath}
     which shows that the left hand side in \eqref{eq:grpp:6} is
     contained in the right  hand side.

     On the other hand, if $g\in \jj_P^A\big(\In_P(f)\big)_d$ then
     there is a derivative $\xi$ such that $g=\xi \In_P(f)$ with
     \begin{displaymath}
       d\leq v_P(\xi)+v_P\big(\In_P(f)\big)=v_P(\xi)+v_P(f).
     \end{displaymath}
     Again, in view of \eqref{eq:grpp:5} we thus have
     \begin{displaymath}
       g=\xi f-\xi h\in \jj_P^A\big(\In_P(f)\big)_d+F_{d+1},
     \end{displaymath}
     which shows that the left hand side in \eqref{eq:grpp:6} is
     contained in the right  hand side.

     The proof for $\gr_P^{AC}(T_f)=\gr_P^{AC}(T_{\In_P(f)})$ works analogously.
   \end{proof}

   \begin{corollary}\label{cor:qhgr}
     Let $f\in\K[\ux]$ be \qh\  of type $(w;d)$ 
     and let $P$ be the $C$-polytope defined by the single weight
     vector $w$.
     \begin{enumerate}
     \item Then $\Gamma(f)\subseteq P$,
       $\gr_P^A(M_f)=\gr_P(M_f)$ and
       $\gr_P^{AC}(T_f)=\gr_P(T_f)$
     \item If moreover $\mu(f)<\infty$ respectively $\tau(f)<\infty$,
       then a set of monomials is a $\K$-vector space basis
       for $\gr_P^A(M_f)$ respectively for $\gr_P^{AC}(T_f)$ if and only
       if it is one for $M_f$ respectively for $T_f$.
     \end{enumerate}
   \end{corollary}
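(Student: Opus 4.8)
The plan is the following. First, the inclusion $\Gamma(f)\subseteq P$ is immediate: since $f$ is \qh\ of type $(w;d)$, every exponent of $\supp(f)$ has the same weighted degree and hence lies on the unique facet of $P$, so the unique compact face of $\Gamma_+(f)$ lies in that facet and in particular $\In_P(f)=f$. Here $F_\bullet$ is simply the weighted filtration attached to $w$, and on $\K[\ux]$ the valuation $v_P$ is $\deg_w$ up to a fixed positive rational factor. For the two graded-algebra identities in (a) I would use the inclusions $\jj_P^A(f)_d\subseteq\jj(f)\cap F_d$ and $\tj_P^{AC}(f)_d\subseteq\tj(f)\cap F_d$ noted earlier, reducing everything to the reverse inclusions $\jj(f)\cap F_d\subseteq\jj_P^A(f)_d$ and $\tj(f)\cap F_d\subseteq\tj_P^{AC}(f)_d$ for all $d$; granting these, the defining summands of the two graded algebras coincide.

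To prove $\jj(f)\cap F_d\subseteq\jj_P^A(f)_d$ I would start from $g=\xi f$ with $v_P(g)\geq d$ and exploit that, $f$ being \qh, each $f_{x_i}$ is \qh\ of weighted degree $d-w_i$; hence for a single monomial derivation $\ux^\beta\partial_{x_i}$ the element $\ux^\beta\partial_{x_i}f=\ux^\beta f_{x_i}$ is weighted homogeneous and $v_P(\ux^\beta\partial_{x_i}f)=v_P(\ux^\beta\partial_{x_i})+v_P(f)$ whenever it is nonzero. Decomposing $\xi=\sum_e\xi_e$ into its $v_P$-homogeneous parts, each $\xi_ef$ is then weighted homogeneous of $v_P$-degree $e+v_P(f)$ or zero, so $g=\sum_e\xi_ef$ is precisely the splitting of $g$ into weighted homogeneous parts; thus $v_P(g)\geq d$ forces $\xi_ef=0$ whenever $e+v_P(f)<d$, and $\xi'=\sum_{e+v_P(f)\geq d}\xi_e$ satisfies $\xi'f=g$ with $v_P(\xi')\geq d-v_P(f)$, i.e.\ $g\in\jj_P^A(f)_d$. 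The $\tj$-case runs identically: for $u=g\cdot f+\xi f$ with $v_P(u)\geq d$, decompose $g=\sum_eg_e$ and $\xi=\sum_e\xi_e$; the $v_P$-degree-$(e+v_P(f))$ part of $u$ is $g_ef+\xi_ef$ (the term $g_ef$ does not collapse, $\Kx$ being a domain), which vanishes for $e+v_P(f)<d$, and discarding those components yields $g',\xi'$ with $g'f+\xi'f=u$ and $\min\{v_P(g'),v_P(\xi')\}\geq d-v_P(f)$. That proves (a).

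For (b) I would first observe that, $f$ being \qh, the ideals $\jj(f)$ and $\tj(f)$ are weighted homogeneous, so $M_f$ and $T_f$ carry the $\deg_w$-grading. If $B$ is a finite set of monomials that is a basis of $\gr_P^A(M_f)$, then by (a) it is a basis of $\gr_P(M_f)$, and since $\dim_\K\gr_P(M_f)=\mu(f)<\infty$ by Corollary~\ref{cor:mutaugr}(a), Proposition~\ref{prop:regularbasis}(b) shows $B$ is a basis of $M_f$. Conversely, if $B$ is a basis of $M_f$ then $B$ has $\mu(f)=\dim_\K\gr_P(M_f)$ elements, so it suffices to show $B$ is linearly independent in $\gr_P(M_f)$; a relation among the classes of the $\ux^\alpha\in B$ splits into $v_P$-homogeneous relations, so we may assume $\sum_{v_P(\ux^\alpha)=d}c_\alpha\ux^\alpha=j+h$ with $j\in\jj(f)\cap F_d$ and $h\in F_{d+1}$, and since the left-hand side is weighted homogeneous of degree $d$ while $h$ contributes nothing in that degree, it equals the weighted-degree-$d$ part of $j$, which lies in $\jj(f)$ as $\jj(f)$ is weighted homogeneous, forcing all $c_\alpha=0$. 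The identical argument with $\tj(f)$, Corollary~\ref{cor:mutaugr}(b), and the identity $\gr_P^{AC}(T_f)=\gr_P(T_f)$ from (a) settles $T_f$.

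I expect the main obstacle to be exactly the step in (a) asserting that \emph{no valuation is lost}: that any representation $g=\xi f$ (or $u=gf+\xi f$) of an element sitting deep in the filtration can be repaired to one in which $v_P(\xi)+v_P(f)$ (respectively $\min\{v_P(g)+v_P(f),v_P(\xi)+v_P(f)\}$) is equally deep. The point is that the $v_P$-homogeneous components of $\xi$ responsible for the discrepancy must annihilate the \qh\ polynomial $f$ outright, so they can simply be dropped — which is precisely what fails once $P$ acquires a second facet (or $f\neq\In_P(f)$), as Example~\ref{ex:t45} shows, and is why the quasihomogeneous case is so transparent.
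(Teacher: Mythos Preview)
Your proposal is correct and follows essentially the same approach as the paper's proof: for (a) you exploit that the partial derivatives $f_{x_i}$ are weighted homogeneous of degree $d-w_i$, decompose a given $\xi$ (and $g$) into $v_P$-homogeneous pieces, and discard those pieces that must annihilate $f$; for (b) you combine part (a) with the dimension equality of Corollary~\ref{cor:mutaugr} and the weighted homogeneity of $\jj(f)$ (resp.\ $\tj(f)$). The only cosmetic difference is that in the converse of (b) you check linear independence of $B$ in $\gr_P(M_f)$ while the paper checks that $B$ spans --- with the dimension count these are interchangeable.
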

   \begin{proof}
     Since  $P$ has only one side it induces a grading
     on $\K[\ux]$ and a \lang{homogeneous} filtration on $\Kx$.
     \begin{enumerate}
     \item We note that the partial derivative $f_{x_i}$ is \qh\  of type
       $(w;d-w_i)$ if it does not vanish. Thus the ideals $\jj(f)$ and
       $\tj(f)$ are generated by weighted homogeneous elements. This
       implies that
       \bmath
         (\jj(f)\cap F_k)+F_{k+1}=\jj_P^A(f)_k+F_{k+1}
       \emath
       and
       \bmath
         (\tj(f)\cap F_k)+F_{k+1}=\tj_P^{AC}(f)_k+F_{k+1}
       \emath
       for all $k\geq 0$ as required. \lang{Let us elaborate this argument for
       the ideal $\jj(f)$. 

       Suppose that $h=\sum_{i=1}^n g_i\cdot f_{x_i}\in\jj(f)$ is
       given. We can decompose the $g_i$ into their quasihomogeneous
       parts
       \begin{displaymath}
         g_i=\sum_{j\geq 0}g_{i,j}
       \end{displaymath}
       with $g_{i,j}$ \qh\  of type $(w;j)$. Then $h$ decomposes into
       quasihomogeneous parts $h=\sum_{j\geq 0} h_j$ with
       \begin{displaymath}
         h_j=\sum_{f_{x_i}\not=0} g_{i,j-d+w_i}\cdot f_{x_i}.
       \end{displaymath}
       If we now suppose that $h\in F_k$ then we can replace $g_{i,j}$
       by zero for $j<k+d-w_i$, i.e.\ we may assume that
       \begin{displaymath}
         g_i=\sum_{j\geq k+d-w_i} g_{i,j}\in F_{k+d-w_i}.
       \end{displaymath}
       Setting
       \begin{displaymath}
         \xi=\sum_{f_{x_i}\not=0}g_i\cdot \partial_{x_i}
       \end{displaymath}
       we have $h=\xi f$ and necessarily
       $v_P(\xi)+v_P(f)=v_P(h)=k$. Thus $h\in\jj(f)_k$.}       
     \item  By Corollary \ref{cor:regularbasis} any monomial basis $B$
       of $\gr_P^A(M_f)$  is a generating
       set of $M_f$. However, by (a) and Corollary
       \ref{cor:mutaugr} these vector spaces have the same
       dimension. Hence, $B$ is a basis of $M_f$. 

       For the converse we note that $\jj(f)$ 
       is generated by weighted homogeneous polynomials. If $B$
       is a monomial basis $M_f$ and $\ux^\beta$ is
       any monomial, then there are $c_\alpha\in\K$ and a $g\in\jj(f)$  such that
       \begin{displaymath}
         \ux^\beta=\sum_{\ux^\alpha\in B}c_\alpha\cdot\ux^\alpha+g,
       \end{displaymath}
       and all $\ux^\alpha$ as well as $g$ are weighted homogeneous
       polynomials of the same weighted degree as $\ux^\beta$. 
       In particular $g\in \jj(f)\cap F_d$ with $d={v_P(\ux^\beta)}$, and thus $\ux^\beta$ is a 
       linear combination of the elements of $B$ in $\gr_P(M_f)$. This
       shows that $B$ generates $\gr_P(M_f)=\gr_P^A(M_f)$, and since
       $M_f$ and $\gr_P(M_f)$ have the same dimension by Corollary \ref{cor:mutaugr} $B$
       must be a basis of $\gr^A_P(M_f)$.

       The proof for $\gr_P^{AC}(T_f)$ and $T_f$ works in the same way.
     \end{enumerate}
   \end{proof}

   \section{Normal forms}\label{sec:normalforms}

   To obtain normal forms of  power series which are not
   right semi-quasi\-homo\-ge\-neous the only
   known method was introduced by Arnol'd in \cite{Arn75} over the
   complex numbers and slightly generalised by Wall in \cite{Wal99}.
   It requires the principal part $\In_P(f)$ of the power series (with
   respect to some $C$-polytope $P$) to be an \emph{isolated} singularity and
   its Milnor algebra to have a \emph{finite regular basis}. Arnol'd
   actually gives a more restrictive condition but his proof shows
   that this suffices as was pointed out by Wall. We generalise
   Arnol'd's condition both in the strict and in the weak form to the
   situation of contact equivalence and derive normal forms for right
   as well as for contact equivalence in arbitrary characteristic. 

   \begin{definition}\label{def:AAAC}
     Let $P$ be a $C$-polytope and let $f\in\Kx$ be a power series.
     \begin{enumerate}
     \item       
       We say that 
       $f$ satisfies \emph{condition} \Aa\  w.r.t.\ $P$, if for any
       $g\in\jj(f)$ there exists a derivation $\xi\in\Der_\K(\Kx)$ such
       that
       \begin{displaymath}
         v_P(g)=v_P(\xi)+v_P(f)<v_P(g-\xi f).
       \end{displaymath}
     \item
       $f$ satisfies
       \emph{condition} \AAa\ w.r.t. $P$, if
       $\dim_\K\big(\gr_P^A(M_f)\big)<\infty$.
     \item
       We  say
       $f$ satisfies \emph{condition} \AC\  w.r.t.\ $P$, if for all $h\in\tj(f)$ there exists a $g\in
       \Kx$ and a derivation $\xi\in\Der_\K(\Kx)$ such that
       \begin{displaymath}
         v_P(h)=\min\{v_P(g)+v_P(f),v_P(\xi)+v_P(f)\}\;<\;v_P(h-g\cdot
         f-\xi f).
       \end{displaymath}
     \item 
       Finally, we say $f$ satisfies
       \emph{condition} \AAC\  w.r.t.\ $P$, if 
       $\dim_\K\big(\gr_P^{AC}(T_f)\big)<\infty$.
     \end{enumerate}
   \end{definition}

   \begin{remark}
     \begin{enumerate}
     \item Condition \Aa\ is due to Arnol'd and is equivalent to
       \begin{displaymath}
         \big(\jj(f)\cap F_d\big)+F_{d+1}
         =
         \jj_P^A(f)_d+F_{d+1},
         \;\;\;\mbox{ for all }d\geq 0,
       \end{displaymath}
       i.e.\ if
       \begin{displaymath}
         \gr_P(M_f)=\gr_P^A(M_f).
       \end{displaymath}
       i.e.\ if
       \begin{displaymath}
         \mu(f)=\dim_\K\big(\gr_P^A(M_f)\big).
       \end{displaymath}
     \item Condition \AAa\ (\AAa\ stands for \emph{almost \Aa}), which
       is obviously weaker than \Aa, is due to Wall \cite{Wal99} and
       says that the Milnor algebra of $f$ has a regular basis.
     \item While condition \Aa\ is meant to deal with right
       equivalence, our condition \AC\ deals with contact
       equivalence. \AC\ is equivalent to
       \begin{displaymath}
         \big(\tj(f)\cap F_d\big)+F_{d+1}
         =
         \tj_P^{AC}(f)_d+F_{d+1}
         \;\;\;\mbox{ for all }d\geq 0,
       \end{displaymath}
       i.e.\ if
       \begin{displaymath}
         \gr_P(T_f)=\gr_P^{AC}(T_f).
       \end{displaymath}
       i.e.\ if
       \begin{displaymath}
         \tau(f)=\dim_\K\big(\gr_P^{AC}(T_f)\big).
       \end{displaymath}
     \item \AAC\ means \emph{almost \AC}\ and says that the Tjurina
       algebra of $f$ has a regular basis.
     \end{enumerate}
   \end{remark}

   \lang{The naming of the above conditions should explain the \emph{superscripts} in
     $\gr_P^A(M_f)$ respectively in $\gr_P^{AC}(T_f)$ and in the
     corresponding ideals.}
   
   \begin{remark}
     The above equivalence of the characterisations of condition \Aa\
     respectively \AC\ uses Corollary \ref{cor:regularbasis}.
     Corollary~\ref{cor:mutaugr} and~\ref{cor:regularbasis} together
     with Example \ref{ex:e33}  show that for isolated
     singularities the \emph{almost} conditions are indeed strictly
     weaker. Moreover,
     \begin{equation}\label{eq:AAA}
       \mu(f)<\infty \text{ and }f \text{ satisfies \Aa\ } \;\;\; \Longrightarrow\;\;\;
       f \text{ satisfies \AAa\ } \;\;\; \Longrightarrow \;\;\; \mu(f)<\infty
     \end{equation}
     and
     \begin{equation}\label{eq:ACAAC}
       \tau(f)<\infty\text{ and }f\text{ satisfies \AC\ } \; \Longrightarrow \;
       f\text{ satisfies \AAC\ } \; \Longrightarrow \; \tau(f)<\infty.
     \end{equation}
     
     We point out that by Lemma~\ref{lem:grpp} 
     \begin{equation}\label{eq:AAfinf}
       f \mbox{ satisfies \AAa\  resp.\ \AAC\ }
       \;\;\;\Longleftrightarrow\;\;\;
       \In_P(f) \mbox{ satisfies \AAa\  resp.\ \AAC\ },
     \end{equation}
     i.e.\ the conditions \AAa\  and \AAC\  only depend on the principal
     part of $f$.
   \end{remark}

   We now formulate our main result on normal forms without refering
   to the conditions \AAa\  respectively \AAC, and we will see later
   how they come in useful. The statement for right equivalence in
   this form without refering to condition \Aa\  or \AAa\  was first stated
   over the complex numbers in \cite[Theorem~2.1]{Wal99}, but was already
   proved by Arnol'd in \cite[Theorem~9.5]{Arn75}. We generalise the
   statement to contact equivalence and give a different proof which works for
   any algebraically closed field making an ``Ansatz'' with power series.
   Recall that $\ord(f)$ denotes the order of the power series $f$.

   \begin{theorem}[Normal forms with respect to right equivalence]\label{thm:nfre}
     Let $f\in\m$, $P$ be a $C$-polytope and
     $B=\{\ux^\alpha\;|\;\alpha\in\Lambda\}$ a regular basis for
     $M_{\In_P(f)}$. 
     
     If $\m^{k+2}\subseteq\m^2\cdot\jj(f)$ then
     \begin{equation}\label{eq:nfre:1}
       f\;\sim_r\;\In_P(f)+\sum_{\alpha\in\Lambda_f} c_\alpha \ux^\alpha.
     \end{equation}
     for suitable $c_\alpha\in\K$, where $\Lambda_f$ is the finite set
     \begin{displaymath}
       \Lambda_f=\big\{\alpha\in\Lambda\;\big|\;\deg(\ux^\alpha)\leq
       2k-\ord(f)+2,\; v_P(\ux^\alpha)\geq v_P\big(f-\In_P(f)\big)\big\}.
     \end{displaymath}
   \end{theorem}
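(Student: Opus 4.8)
The plan is to produce the isomorphism in \eqref{eq:nfre:1} by an iterative ``Ansatz with power series'' that successively removes all terms of $f$ which are \emph{not} spanned by the regular basis $B$, working one piecewise-valuation degree at a time. Write $f = \In_P(f) + h_0$ with $v_P(h_0) = v_P(f-\In_P(f)) =: d_0 > v_P(f)$. The key structural input is Corollary~\ref{cor:regularbasis}: since $B$ is a regular basis for $M_{\In_P(f)}$, the monomials in $B$ generate $\gr_P(M_{\In_P(f)})$, hence every monomial $\ux^\beta$ of piecewise degree $d$ can, modulo $F_{d+1}$, be written as a $\K$-linear combination of basis monomials of the same degree plus an element of $\jj^A_P(\In_P(f))_d$; by Lemma~\ref{lem:grpp} we may replace $\jj^A_P(\In_P(f))_d$ by $\jj^A_P(f)_d$ throughout, so the relevant elements are genuinely of the form $\xi f$ with $v_P(\xi)+v_P(f)\geq d$. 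Formula \eqref{eq:derivation:2} then tells us that applying the coordinate change $\varphi:x_i\mapsto x_i+g_i$ with $\xi=\sum g_i\partial_{x_i}$ changes $f$ by $\xi f$ plus strictly higher-valuation terms, so each step trades a non-basis term of valuation $d$ for basis terms of valuation $d$ together with correction terms of valuation $\geq d+1$.

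Concretely, I would organise the induction as follows. Suppose after finitely many coordinate changes we have arrived at $f^{(m)} \sim_r f$ with $f^{(m)} = \In_P(f) + \sum_{\alpha\in\Lambda_f,\ v_P(\ux^\alpha)<d} c_\alpha\ux^\alpha + h_m$ where $v_P(h_m)=d$. Decompose the piecewise-degree-$d$ part of $h_m$ using the regular basis: the non-basis monomials get absorbed into $\jj_P^A(f)_d$, contributing a derivation $\xi$ with $v_P(\xi) + v_P(f) \geq d$, hence (as $v_P(f)\geq 1$, $f\in\m$) with $v_P(\xi)\geq d-v_P(f) > v_P(x_i)$ on the relevant monomials, so that \eqref{eq:derivation:2} applies and the associated $\varphi$ is an automorphism; applying $\varphi$ replaces those terms by basis terms of degree $d$ plus a tail in $F_{d+1}$. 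The basis monomials of degree $d$ are simply added to the running sum with their coefficients $c_\alpha$. This raises $v_P(h_{m+1})$ to at least $d+1$, so the process terminates in finitely many steps within each degree and — using \eqref{eq:filtration:1}, i.e.\ $\m^m\subseteq F_d$ — the composite of all these coordinate changes is a well-defined automorphism of $\Kx$ (an infinite but convergent product in the $\m$-adic topology).

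Two points need care, and the genuinely delicate one is the \emph{termination/finiteness} of $\Lambda_f$ together with the bound $\deg(\ux^\alpha)\leq 2k-\ord(f)+2$. The hypothesis $\m^{k+2}\subseteq\m^2\cdot\jj(f)$ is exactly what is needed here: it is the finite-determinacy input (cf.\ \cite{BGM10}) guaranteeing that $f$ is right $(2k-\ord(f)+2)$-determined, so once we have removed all non-basis terms of degree $\leq 2k-\ord(f)+2$ the remaining tail, lying in a high power of $\m$, can be absorbed by a further automorphism without reintroducing low-degree terms; this is what lets us truncate $\Lambda$ to the finite set $\Lambda_f$. The condition $v_P(\ux^\alpha)\geq v_P(f-\In_P(f))$ simply records that the iteration never touches monomials of valuation $\leq v_P(f)$ (those already sit in $\In_P(f)$) and only produces terms of valuation $\geq d_0$. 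I expect the main obstacle to be bookkeeping: verifying that the correction terms produced at stage $d$ never have valuation $\leq d$ (so degrees are strictly increasing and the product of automorphisms converges) and that the determinacy bound interacts correctly with the $\m^2$ in $\m^2\cdot\jj(f)$, which is why $\ord(f)$ enters the degree bound; the algebraic heart, by contrast, is the routine combination of Corollary~\ref{cor:regularbasis}, Lemma~\ref{lem:grpp} and the estimate \eqref{eq:derivation:2}.
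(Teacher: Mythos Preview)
Your proposal is correct and follows essentially the same route as the paper: iterate coordinate changes one piecewise degree at a time, using the regular-basis decomposition in $\gr_P^A(M_{\In_P(f)})$ together with \eqref{eq:derivation:2} to realise the Jacobian part as an automorphism, and then invoke the right-determinacy bound $2k-\ord(f)+2$ from \cite{BGM10} (supplied by $\m^{k+2}\subseteq\m^2\cdot\jj(f)$) to truncate. One small slip: for convergence of the product of automorphisms you want \eqref{eq:filtration:2} (i.e.\ $F_d\subseteq\m^k$), not \eqref{eq:filtration:1}; the paper in fact sidesteps any infinite product by stopping as soon as $F_{d_m}\subseteq\m^{2k-\ord(f)+3}$ and applying determinacy to the finite partial result, exactly as your final paragraph anticipates.
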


   \begin{theorem}[Normal forms with respect to contact equivalence]\label{thm:nfce}
     Let $f\in\m$, $P$ be a $C$-polytope and
     $B=\{\ux^\alpha\;|\;\alpha\in\Lambda\}$ a regular basis for
     $T_{\In_P(f)}$. 
     
     If $\m^{k+2}\subseteq\m\cdot\langle f\rangle+\m^2\cdot\jj(f)$ then
     \begin{equation}\label{eq:nfce:1}
       f\;\sim_c\;\In_P(f)+\sum_{\alpha\in\Lambda_f} c_\alpha \ux^\alpha.
     \end{equation}
     for suitable $c_\alpha\in\K$, where $\Lambda_f$ is the finite set
     \begin{displaymath}
       \Lambda_f=\big\{\alpha\in\Lambda\;\big|\;\deg(\ux^\alpha)\leq
       2k-\ord(f)+2,\; v_P(\ux^\alpha)\geq v_P\big(f-\In_P(f)\big)\big\}.
     \end{displaymath}
   \end{theorem}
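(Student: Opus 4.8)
The plan is to prove the statement by a step-by-step elimination argument along the piecewise filtration induced by $P$, running parallel to the proof of Theorem~\ref{thm:nfre} but keeping track of the extra freedom provided by multiplication with a unit. Write $f_0=\In_P(f)$ and $f=f_0+h$ with $v_P(h)\geq d_0+1$, where $d_0=v_P(f_0)$; more generally set $d_1=v_P\bigl(f-\In_P(f)\bigr)\geq d_0+1$. The goal is to construct inductively, for increasing valuation levels $d\geq d_1$, automorphisms $\varphi\in\Aut(\Kx)$ and units $u\in\Kx^*$ with $\varphi(x_i)=x_i+g_i$, $v_P(g_i)>v_P(x_i)$, and $u=1+(\text{higher terms})$, so that after applying $u\cdot\varphi(-)$ the power series becomes $f_0+\sum_{\alpha\in\Lambda,\,v_P(\ux^\alpha)\geq d_1}c_\alpha\ux^\alpha$ modulo $F_{d+1}$, with the sum ranging only over the regular-basis monomials. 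The key local computation is formula~\eqref{eq:derivation:2}: if $\varphi(x_i)=x_i+g_i$ and $u=1+a$ then
\begin{displaymath}
  u\cdot\varphi(f)=f+g\cdot f+\xi f+(\text{terms of strictly larger }v_P),
\end{displaymath}
where $\xi=\sum_i g_i\partial_{x_i}$ and $g=a$, and $v_P(g\cdot f+\xi f)>v_P(f)$ provided $v_P(a)>0$ and $v_P(g_i)>v_P(x_i)$. Thus at each level $d$ the correction term we can remove from the current power series lies precisely in $\tj_P^{AC}(f_0)_d+F_{d+1}$ (using Lemma~\ref{lem:grpp} to replace $f$ by $\In_P(f)$), and what survives in $F_d/(\tj_P^{AC}(f_0)_d+F_{d+1})$ is spanned by the regular-basis monomials $\ux^\alpha$ of valuation $d$ — this is exactly what a monomial basis $B$ of $\gr_P^{AC}(T_{\In_P(f)})$ records.

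First I would set up the induction: assuming the power series has, after previous coordinate changes and unit multiplications, the shape $f_0+\sum_{\alpha}c_\alpha\ux^\alpha+(\text{error of }v_P\geq d)$ with the visible sum already reduced to regular-basis monomials of valuation $<d$, I write the error's valuation-$d$ part as an element of $F_d$ and decompose it, modulo $F_{d+1}$, as a linear combination of regular-basis monomials of valuation $d$ plus an element of $\tj_P^{AC}(f_0)_d$; the latter is of the form $g\cdot f_0+\xi f_0$ with $v_P(g)+v_P(f_0)\geq d$ and $v_P(\xi)+v_P(f_0)\geq d$, and I cancel it by the substitution $x_i\mapsto x_i+(\text{the }x_i\text{-component of }\xi)$ together with the unit $1-g$, invoking~\eqref{eq:derivation:2}. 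The degree truncation is what makes this terminate in finitely many steps and yields a \emph{polynomial} normal form: by the hypothesis $\m^{k+2}\subseteq\m\cdot\langle f\rangle+\m^2\cdot\jj(f)$, $f$ is contact $(k+1)$-determined in the sense of \cite{BGM10} up to the sharper bound, so only monomials of degree $\leq 2k-\ord(f)+2$ can matter; combining this with~\eqref{eq:filtration:2} (large $v_P$ forces large $\m$-degree) shows that beyond some valuation level there is nothing left to do, and only finitely many $\alpha\in\Lambda$ with $\deg(\ux^\alpha)\leq 2k-\ord(f)+2$ and $v_P(\ux^\alpha)\geq d_1$ appear, which is precisely the set $\Lambda_f$. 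The infinite composition of the coordinate changes and the infinite product of units converge in $\Kx$ because each factor differs from the identity by terms of strictly increasing $v_P$, hence of strictly increasing $\m$-order by~\eqref{eq:filtration:2}; this gives the genuine automorphism and unit realizing~\eqref{eq:nfce:1}.

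The main obstacle is the interplay between the two filtrations: the $v_P$-filtration, along which the elimination is organized, and the $\m$-adic filtration, which controls convergence and the determinacy truncation. Concretely, one must check that the element to be eliminated at level $d$ really lies in $\tj_P^{AC}(f_0)_d$ and not merely in $\tj(f_0)\cap F_d$ — that is, that the required derivation $\xi$ and unit-correction $g$ have the \emph{expected} valuations $v_P(\xi)+v_P(f_0)\geq d$ and $v_P(g)+v_P(f_0)\geq d$, rather than being forced down by cancellation (the phenomenon illustrated in Example~\ref{ex:t45}). This is guaranteed here because $B$ is a regular basis, i.e.\ $\gr_P^{AC}(T_{\In_P(f)})$ is finite-dimensional with monomial basis $B$, so by Corollary~\ref{cor:regularbasis} and Corollary~\ref{cor:mutaugr} the quotients $F_d/(\tj_P^{AC}(f_0)_d+F_{d+1})$ are spanned by the monomials of $B$ at each level and vanish for $d$ large; the decomposition step is then exactly the statement that $B$ spans these graded pieces. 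A secondary technical point is making sure the unit $u$ can always be chosen with $v_P(u-1)>0$, i.e.\ that the "$g$-part" of an element of $\tj_P^{AC}(f_0)_d$ genuinely contributes a correction of positive valuation — this follows from $v_P(f_0)=d_0\geq 1$ and $v_P(g)\geq d-d_0\geq d_1-d_0\geq 1$. Once these valuation bookkeeping facts are in place, the argument is a routine, if lengthy, induction; the proof for right equivalence (Theorem~\ref{thm:nfre}) is the special case in which the unit is taken to be $1$ throughout, and $\tj_P^{AC}$, $\gr_P^{AC}(T_f)$ are replaced by $\jj_P^A$, $\gr_P^A(M_f)$.
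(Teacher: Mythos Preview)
Your proposal is correct and follows essentially the same route as the paper's proof: inductive elimination along the $v_P$-filtration, using the regular basis of $\gr_P^{AC}(T_{\In_P(f)})$ to decompose the level-$d$ error as basis monomials plus an element of $\tj_P^{AC}(f_0)_d$, then removing the latter via the automorphism $x_i\mapsto x_i-b_i$ and a unit $1+b$ as dictated by~\eqref{eq:derivation:2}, and terminating via the determinacy bound $2k-\ord(f)+2$ from \cite{BGM10}.

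One small slip: you write that the graded pieces $F_d/(\tj_P^{AC}(f_0)_d+F_{d+1})$ ``vanish for $d$ large''. The hypotheses do \emph{not} guarantee this---$B$ is merely a regular basis, not assumed finite, so condition \AAC\ need not hold (cf.\ Example~\ref{ex:t45}). Termination comes solely from the determinacy bound together with~\eqref{eq:filtration:2}, which you do invoke correctly elsewhere; the vanishing claim is unnecessary and should be dropped. Relatedly, your remark about the ``infinite composition'' converging is superfluous once you have already argued that finitely many steps suffice modulo $\m^{2k-\ord(f)+3}$.
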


   We will only prove Theorem~\ref{thm:nfce} since the proof of
   Theorem~\ref{thm:nfre} works along the same lines.

   \begin{proof}[Proof of Theorem~\ref{thm:nfce}]
     In the proof we will write $f_P$ instead of $\In_P(f)$ to shorten
     the notation.
     The basic idea is to construct a finite sequence $(f_i)_{i=0}^m$
     with $f_0=f$ such that $f_i\sim_c f$ for all $i$ and that
     \begin{displaymath}
       f_m\;\equiv\;f_P+\sum_{\alpha\in\Lambda_f} c_\alpha \ux^\alpha
       \;\big(\mod \m^{2k-\ord(f)+3}\big).
     \end{displaymath}
     We try to do so by eliminating terms in $f$ (piecewise) degree by
     degree. If we succeed then by
     \cite[Theorem~2.1]{BGM10} we have 
     \begin{displaymath}
       f\;\sim_c\;f_m\;\sim_c\;f_P+\sum_{\alpha\in\Lambda_f} c_\alpha \ux^\alpha
     \end{displaymath}
     as desired since $2k-\ord(f)+2$ is a bound for the
     determinacy of $f$.

     We start our construction by denoting by $g=\In_P(f-f_P)$ the principal part
     of $g_1=f-f_P$ with respect to $P$ and setting $h=f-f_P-g$. If we set
     $d_0=v_P(f)=v_P(f_P)$ and $d_1=v_P(f-f_P)>d_0$ then $f_P\in F_{d_0}$
     is \ph\  of degree $d_0$, $g\in F_{d_1}$ is \ph\  of degree $d_1$ and
     $h\in F_{d_1+1}$. Moreover, since $B$ is a $\K$-vector space
     basis of $\gr_P^{AC}(T_{f_P})$ we have
     \begin{displaymath}
       g=\sum_{\stackrel{\alpha\in \Lambda}{\lambda_P(\alpha)=d_1}}
       c_\alpha \ux^\alpha + b_0\cdot f_P+\xi f_P+h'
     \end{displaymath}
     for suitable
     \bmath
       c_\alpha\in\K,\;
       b_0\in\Kx,\; 
       \xi=\sum_{i=1}^n b_i\cdot\partial_{x_i}\in\Der_\K(\Kx),\;
       \mbox{ and } h'\in\Kx
     \emath
     satisfying
     \begin{equation}
       \label{eq:nfce:2}
       d_1=\min\{v_P(b_0)+d_0,v_P(\xi)+d_0\}
       \;\;\;\mbox{ and }\;\;\;
       v_P(h')>d_1.
     \end{equation}
     Form \eqref{eq:nfce:2} we deduce that
     \bmath
       v_P(b_0)\geq d_1-d_0>0 
     \emath
     and thus
     \begin{equation}
       \label{eq:nfce:3}
       b_0\in\m,
     \end{equation}
     and also
     \begin{equation}\label{eq:nfce:4}
       v_P(b_i)-v_P(x_i)=v_P(b_i\partial_{x_i})\geq v_P(\xi)\geq d_1-d_0>0.
     \end{equation}
     From \eqref{eq:derivation:2} we know that then
     \begin{displaymath}
       \varphi:\Kx\longrightarrow\Kx:x_i\mapsto x_i-b_i
     \end{displaymath}
     is a local $\K$-algebra isomorphism of $\Kx$.
     
     Moreover, applying
     \eqref{eq:derivation:2} to $\varphi(f_P)$ and to $\varphi(g)$ we
     get
     \begin{align*}
       \varphi(f)=&\varphi(f_P)+\varphi(g)+\varphi(h)\\
       =&f_P-\xi f_P+h_1+g-\xi g+h_2+\varphi(h)\\
       =&(1+b_0)\cdot f_P+\sum_{\stackrel{\alpha\in \Lambda}{\lambda_P(\alpha)=d_1}}
       c_\alpha \ux^\alpha +(h'+h_1+h_2+\varphi(h)-\xi g)
     \end{align*}
     with $h',h_1,h_2,\varphi(h),\xi g\in F_{d_1+1}$ taking
     \eqref{eq:derivation:1} and \eqref{eq:nfce:4} into account. 
     
     Since by \eqref{eq:nfce:3} $b_0\in\m$ we can multiply by the
     inverse of the unit $1+b_0$ which is of the form $1+b$ with
     $v_P(b)\geq d_1-d_0>0$ so that we get
     \begin{displaymath}
       (1+b)\cdot \varphi(f)=f_P+\sum_{\stackrel{\alpha\in \Lambda}{\lambda_P(\alpha)=d_1}}
       c_\alpha \ux^\alpha+g_2
       \;\;\;\mbox{ with }\;\;\;v_P(g_2)>d_1.
     \end{displaymath}
     Setting $f_1=(1+b)\cdot \varphi(f)$ we have $f_1\sim_c f$,
     and we can go on inductively treating $g_2$ as we have treated
     $g_1=f-f_P$ before. That way we construct power series
     \begin{displaymath}
       f_m=f_P+\sum_{\stackrel{\alpha\in \Lambda}{d_1\leq
           \lambda_P(\alpha)\leq d_m}}
       c_\alpha \ux^\alpha +g_{m+1}
     \end{displaymath}
     with $g_{m+1}\in F_{d_m}$ and $d_1<d_2<\ldots<d_m$, $m\geq 0$. By
     \eqref{eq:filtration:2} we eventually have that
     \bmath
       F_{d_m}\subseteq \m^{2k-\ord(f)+3},
     \emath
     and we are done.
   \end{proof}

   Theorem~\ref{thm:nfre} has as easy corollary the result of Arnol'd
   which he proved over the complex numbers in
   \cite[Theorem~9.5]{Arn75}, even though he used condition \Aa\  and
   $\mu\big(\In_P(f)\big)<\infty$\lang{ (see also
   \cite[Theorem~2.1]{Wal99})}.

   \begin{corollary}[Normal forms for right equivalence]\label{cor:nfre}
     Let $P$ be a $C$-polytope and $f\in\m$ be a power series such
     that $\In_P(f)$ satisfies \AAa\  then $f$ is finitely right
     determined, \rsph\  and
     \begin{displaymath}
       f\;\sim_r\;\In_P(f)+\sum_{\stackrel{\ux^\alpha\in
           B}{v_P(\ux^\alpha)>d}} c_\alpha \ux^\alpha
     \end{displaymath}
     for suitable $c_\alpha\in \K$, where $B$ is a finite regular
     basis for $M_{\In_P(f)}$ and $d=v_P(\In_P(f))$. 
   \end{corollary}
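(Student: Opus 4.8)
The plan is to deduce the statement from Theorem~\ref{thm:nfre} by verifying its hypotheses. Since $\In_P(f)$ satisfies \AAa, the Milnor algebra $M_{\In_P(f)}$ has a finite regular basis $B=\{\ux^\alpha\mid\alpha\in\Lambda\}$ by definition of \AAa\ (here $\Lambda$ is finite). By \eqref{eq:ACAAC}'s analogue \eqref{eq:AAA}, condition \AAa\ for $\In_P(f)$ forces $\mu(\In_P(f))<\infty$, so $\In_P(f)$ is an isolated singularity; in particular $\mu\big(\In_P(f)\big)<\infty$ means $f$ is \rsph\ with respect to $P$ by definition. The first real step is therefore to produce a $k$ with $\m^{k+2}\subseteq\m^2\cdot\jj(f)$: since $\mu\big(\In_P(f)\big)<\infty$, Proposition~\ref{prop:sqh}(a) gives $\mu(f)=\mu(\In_P(f))<\infty$, so $f$ is an isolated singularity, hence $\m^{k_0}\subseteq\jj(f)$ for some $k_0$, and then $\m^{k_0+2}\subseteq\m^2\cdot\jj(f)$, so we may take $k=k_0$. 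Finite right determinacy of $f$ then follows from $\mu(f)<\infty$ together with the results of \cite{BGM10} quoted in the introduction.

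With these hypotheses in place, Theorem~\ref{thm:nfre} applies and yields
\begin{displaymath}
  f\;\sim_r\;\In_P(f)+\sum_{\alpha\in\Lambda_f}c_\alpha\ux^\alpha,
\end{displaymath}
where $\Lambda_f=\{\alpha\in\Lambda\mid\deg(\ux^\alpha)\leq 2k-\ord(f)+2,\ v_P(\ux^\alpha)\geq v_P(f-\In_P(f))\}$. The remaining task is purely cosmetic: to rewrite the index set of the sum in the form claimed in the corollary, namely $\{\ux^\alpha\in B\mid v_P(\ux^\alpha)>d\}$ with $d=v_P(\In_P(f))$. For this I would argue as follows. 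First, the degree bound $\deg(\ux^\alpha)\leq 2k-\ord(f)+2$ in the definition of $\Lambda_f$ can be dropped: the basis $B$ is finite, and any basis monomial not satisfying this bound may simply be absorbed with coefficient $c_\alpha=0$, so summing over all of $B$ (subject only to the valuation condition) gives the same equivalence. Second, the condition $v_P(\ux^\alpha)\geq v_P(f-\In_P(f))$ can be replaced by the condition $v_P(\ux^\alpha)>d$: on the one hand $v_P(f-\In_P(f))>v_P(f)=v_P(\In_P(f))=d$, so the original condition already implies $v_P(\ux^\alpha)>d$; on the other hand, any basis monomial $\ux^\alpha$ with $d<v_P(\ux^\alpha)<v_P(f-\In_P(f))$ can again be included harmlessly with coefficient $0$, and any basis monomial with $v_P(\ux^\alpha)\le d$ does not occur among the $\ux^\alpha$ produced by the construction in the proof of Theorem~\ref{thm:nfce}/\ref{thm:nfre} (there the exponents $\alpha$ always satisfy $\lambda_P(\alpha)=d_j>d_0=d$). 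Hence the sum over $\{\ux^\alpha\in B\mid v_P(\ux^\alpha)>d\}$ produces exactly the same normal form.

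I expect the main obstacle to be the bookkeeping in the last paragraph: one has to check carefully that enlarging the index set from $\Lambda_f$ to $\{\ux^\alpha\in B\mid v_P(\ux^\alpha)>d\}$ does not change the right equivalence class, i.e.\ that the extra monomials genuinely appear with coefficient zero in the construction and are not silently needed. This is where one must invoke that $B$ is a \emph{regular} basis (a monomial basis of $\gr_P^A(M_{\In_P(f)})$) so that the elimination step in the proof of Theorem~\ref{thm:nfre} only ever introduces monomials $\ux^\alpha\in B$ of piecewise valuation strictly above $d$ and below the current degree being cleared. Apart from this, the argument is a direct specialisation of the main theorem, and I would keep the write-up short accordingly.
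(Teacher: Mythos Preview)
Your overall strategy matches the paper's: verify the hypotheses of Theorem~\ref{thm:nfre} and invoke it. The bookkeeping on the index set at the end is fine (and more explicit than the paper bothers to be).

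There is, however, a genuine gap in how you obtain $\mu(f)<\infty$. You invoke Proposition~\ref{prop:sqh}(a), but that result is stated and proved only for a \emph{single} weight vector $w\in\Z_{>0}^n$, i.e.\ for the \rsqh\ case where $P$ has one facet. For a general $C$-polytope $P$ with several facets, $\In_P(f)$ is not of the form $\In_w(f)$, and the one-parameter deformation argument in the proof of Proposition~\ref{prop:sqh}(a) does not apply. Example~\ref{ex:t45-2} shows that the analogous implication for $\tau$ actually fails for multi-faceted $P$, so you cannot simply assert the Milnor-number version without proof.

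The paper closes this gap differently and more directly: by Lemma~\ref{lem:grpp} (equivalently \eqref{eq:AAfinf}) one has $\gr_P^A(M_f)=\gr_P^A(M_{\In_P(f)})$, so $f$ itself satisfies \AAa\ whenever $\In_P(f)$ does; then \eqref{eq:AAA} applied to $f$ gives $\mu(f)<\infty$ immediately. Replace your appeal to Proposition~\ref{prop:sqh}(a) with this two-line argument and the proof is complete.
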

   \begin{proof}
     If $\In_P(f)$ satisfies \AAa\  then $f$ does so as well by
     Lemma~\ref{lem:grpp}. By \eqref{eq:AAA} thus $\mu(f)$ is finite
     and 
     therefore $\jj(f)$ contains some power of the maximal ideal. Hence we
     are done by Theorem~\ref{thm:nfre} since a regular basis for
     $M_{\In_P(f)}$ is finite due condition \AAa. 
   \end{proof}

   Using Lemma~\ref{lem:grpp}, \eqref{eq:ACAAC} and
   Theorem~\ref{thm:nfce} we get the analogous statement for contact 
   equivalence. 

   \begin{corollary}[Normal forms for contact equivalence]\label{cor:nfce}
     Let $P$ be a $C$-polytope and $f\in\m$ be a power series such
     that $\In_P(f)$ satisfies \AAC\  then $f$ is finitely contact
     determined, \csph\  and
     \begin{displaymath}
       f\;\sim_c\;\In_P(f)+\sum_{\stackrel{\ux^\alpha\in
           B}{v_P(\ux^\alpha)>d}} c_\alpha \ux^\alpha
     \end{displaymath}
     for suitable $c_\alpha\in \K$, where $B$ is a finite regular
     basis for $T_{\In_P(f)}$ and $d=v_P(\In_P(f))$. 
   \end{corollary}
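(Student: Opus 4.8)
The plan is to derive Corollary~\ref{cor:nfce} from Theorem~\ref{thm:nfce} in exactly the same way that Corollary~\ref{cor:nfre} is derived from Theorem~\ref{thm:nfre}. First I would observe that by Lemma~\ref{lem:grpp} the condition \AAC\ is a property of the principal part only, so the hypothesis that $\In_P(f)$ satisfies \AAC\ implies that $f$ itself satisfies \AAC\ w.r.t.\ $P$; indeed $\gr_P^{AC}(T_f)=\gr_P^{AC}(T_{\In_P(f)})$ and the latter is finite dimensional by assumption. By the implication \eqref{eq:ACAAC}, $f$ satisfying \AAC\ yields $\tau(f)<\infty$, hence $R_f$ is an isolated hypersurface singularity and therefore $\m^{k}\subseteq\tj(f)=\langle f\rangle+\jj(f)$ for some $k\geq 0$. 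This gives the inclusion $\m^{k+2}\subseteq\m\cdot\langle f\rangle+\m^2\cdot\jj(f)$ needed to invoke Theorem~\ref{thm:nfce}: one multiplies $\m^k\subseteq\langle f\rangle+\jj(f)$ by $\m^2$ and notes $\m^2\langle f\rangle\subseteq\m\langle f\rangle$.

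Next I would apply Theorem~\ref{thm:nfce} directly. It provides a finite regular basis $B=\{\ux^\alpha\mid\alpha\in\Lambda\}$ for $T_{\In_P(f)}$ — finite precisely because \AAC\ says $\gr_P^{AC}(T_{\In_P(f)})$ is finite dimensional — and yields
\begin{displaymath}
  f\;\sim_c\;\In_P(f)+\sum_{\alpha\in\Lambda_f}c_\alpha\ux^\alpha
\end{displaymath}
with $\Lambda_f=\{\alpha\in\Lambda\mid\deg(\ux^\alpha)\leq 2k-\ord(f)+2,\;v_P(\ux^\alpha)\geq v_P(f-\In_P(f))\}$. Since $v_P(f-\In_P(f))>v_P(\In_P(f))=d$, the degree-bound constraint in $\Lambda_f$ is redundant for the purpose of the statement (or rather, since $B$ is already finite we may simply enlarge the summation index set to all $\ux^\alpha\in B$ with $v_P(\ux^\alpha)>d$, absorbing the extra coefficients into $c_\alpha=0$). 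This rewrites the normal form in the form asserted in the corollary, namely a sum over $\ux^\alpha\in B$ with $v_P(\ux^\alpha)>d$.

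Finally I would record the two remaining assertions. That $f$ is finitely contact determined is immediate from $\tau(f)<\infty$ together with the determinacy result of \cite{BGM10} (equivalently, it is built into the proof of Theorem~\ref{thm:nfce}, where $2k-\ord(f)+2$ is used as a determinacy bound). That $f$ is \csph\ with respect to $P$ means $\tau(\In_P(f))<\infty$; this follows since by Corollary~\ref{cor:regularbasis}(b) $\tau(\In_P(f))=\dim_\K(\gr_P(T_{\In_P(f)}))\leq\dim_\K(\gr_P^{AC}(T_{\In_P(f)}))<\infty$, the last inequality being condition \AAC. I do not expect any genuine obstacle here: the corollary is a formal consequence of Theorem~\ref{thm:nfce}, Lemma~\ref{lem:grpp}, the implication \eqref{eq:ACAAC}, and Corollary~\ref{cor:regularbasis}; the only mild care needed is the bookkeeping that turns the index set $\Lambda_f$ of Theorem~\ref{thm:nfce} into the cleaner index set $\{\ux^\alpha\in B\mid v_P(\ux^\alpha)>d\}$ of the corollary, which is harmless because $B$ is finite.
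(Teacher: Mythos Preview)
Your proposal is correct and follows essentially the same route as the paper, which simply says the corollary follows from Lemma~\ref{lem:grpp}, \eqref{eq:ACAAC} and Theorem~\ref{thm:nfce} in analogy with the proof of Corollary~\ref{cor:nfre}. You have merely made explicit the details the paper leaves implicit (the passage from $\m^k\subseteq\tj(f)$ to $\m^{k+2}\subseteq\m\langle f\rangle+\m^2\jj(f)$, the bookkeeping on $\Lambda_f$, and the \csph\ conclusion via Corollary~\ref{cor:regularbasis}).
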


   The proof of Theorem~\ref{thm:nfre} and~\ref{thm:nfce} actually
   gives a more precise bound on the determinacy if \AAa\ 
   respectively \AAC\  is fulfilled.

   \begin{corollary}[Finite determinacy bound for right equivalence]\label{cor:fdre}
     Let $P$ be a $C$-polytope, $f\in\m$ be a power series such
     that $\In_P(f)$ satisfies \AAa\  and let $B$ be a regular basis for
     $M_{\In_P(f)}$. Then
     \begin{displaymath}
       d:=\max\big\{v_P\big(\In_P(f)\big),v_P\big(\ux^\alpha\big)\;\big|\; \ux^\alpha\in B\}
     \end{displaymath}
     is finite and $f\sim_r g$ for any $g\in\Kx$ with $v_P(f-g)>d$.
     \lang{

     }In particular, if $\m^{k+1}\subseteq F_{d+1}$, then $f$ is right $k$-determined.
   \end{corollary}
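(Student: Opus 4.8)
The plan is to reduce the statement directly to Theorem~\ref{thm:nfre}. First I would observe that $B$ is finite: this is precisely the content of condition \AAa\ for $\In_P(f)$, which says $\dim_\K\big(\gr_P^A(M_{\In_P(f)})\big)<\infty$, so a monomial basis $B$ of this algebra is a finite set; hence $d=\max\big\{v_P\big(\In_P(f)\big),v_P(\ux^\alpha)\;\big|\;\ux^\alpha\in B\big\}$ is a finite natural number. Next, since $\In_P(f)$ satisfies \AAa, Lemma~\ref{lem:grpp} gives that $f$ satisfies \AAa\ too, and by the implication \eqref{eq:AAA} this forces $\mu(f)<\infty$; equivalently there is a $k\geq 0$ with $\m^{k}\subseteq\jj(f)$, hence $\m^{k+2}\subseteq\m^2\cdot\jj(f)$, so the hypothesis of Theorem~\ref{thm:nfre} is met for this $k$.

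The heart of the argument is then to re-examine the proof of Theorem~\ref{thm:nfre} (equivalently, the proof of Theorem~\ref{thm:nfce} which is the one written out, the right case being identical) and extract the sharper bound. In that proof one starts from any $g$ with $v_P(f-g)>v_P\big(f-\In_P(f)\big)$ — or more simply one produces, degree by degree in the piecewise valuation $v_P$, automorphisms and (in the contact case) units eliminating all terms that are not in the regular basis $B$ and not equal to $\In_P(f)$. The key point is that each elimination step at piecewise degree $e$ only modifies $f$ by terms of strictly larger $v_P$-degree, and the process terminates once the piecewise degree exceeds $d$, because beyond $d$ the graded pieces $F_e/\big(\jj_P^A(f)_e+F_{e+1}\big)$ vanish (there are no basis monomials of $v_P$-degree $>d$). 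Therefore, if $g\in\Kx$ satisfies $v_P(f-g)>d$, then running the same elimination procedure simultaneously on $f$ and on $g$ brings both to the \emph{same} normal form $\In_P(f)+\sum_{\ux^\alpha\in B,\,v_P(\ux^\alpha)>d}c_\alpha\ux^\alpha$ — the coefficients $c_\alpha$ are the same for $f$ and $g$ precisely because $f$ and $g$ agree modulo $F_{d+1}$ and all the $c_\alpha$ that occur are determined by the part of the power series in $v_P$-degree $\le d$. Hence $f\sim_r g$.

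The last assertion is immediate: if $\m^{k+1}\subseteq F_{d+1}$, then any $g$ coinciding with $f$ up to order $k$ satisfies $f-g\in\m^{k+1}\subseteq F_{d+1}$, i.e.\ $v_P(f-g)>d$, so by what was just shown $f\sim_r g$; that is, $f$ is right $k$-determined. The main obstacle is the middle step: one must be careful that the bookkeeping in the proof of Theorem~\ref{thm:nfre} really shows the normal form depends only on the residue of $f$ modulo $F_{d+1}$ (so that $f$ and $g$ land on identical normal forms), rather than merely on finitely many \emph{orders} of $f$; this uses that $F_{d+1}\subseteq\m^{2k-\ord(f)+3}$ eventually by \eqref{eq:filtration:2}, together with the determinacy bound $2k-\ord(f)+2$ from \cite[Theorem~2.1]{BGM10}, to close the induction and conclude $f\sim_r f_m\sim_r g$.
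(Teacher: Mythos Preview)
Your proposal is correct and follows essentially the same route as the paper's own proof (given there for the contact case, Corollary~\ref{cor:fdce}): finiteness of $d$ from \AAa, finite determinacy via Lemma~\ref{lem:grpp} and \eqref{eq:AAA}, and then the observation that in the inductive construction of Theorem~\ref{thm:nfre} the coefficients $c_\alpha$ depend only on the residue of $f$ modulo $F_{d+1}$, so $f$ and $g$ yield the same normal form. One slip to fix: in your displayed normal form the index set should be $\{\ux^\alpha\in B:\,v_P(\ux^\alpha)>v_P(\In_P(f))\}$, not $v_P(\ux^\alpha)>d$ (the latter is empty by definition of $d$); and your closing sentence about ``$F_{d+1}\subseteq\m^{2k-\ord(f)+3}$'' is garbled --- what you need is that for $l$ sufficiently large $F_l\subseteq\m^{2k-\ord(f)+3}$, which lets you discard the tail $g_l$ by the determinacy bound from \cite{BGM10}.
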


   \begin{corollary}[Finite determinacy bound for contact equivalence]\label{cor:fdce}
     Let $P$ be a $C$-polytope, $f\in\m$ be a power series such
     that $\In_P(f)$ satisfies \AAC\  and let $B$ be a regular basis for
     $T_{\In_P(f)}$. Then
     \begin{displaymath}
       d:=\max\big\{v_P\big(\In_P(f)\big),v_P\big(\ux^\alpha\big)\;\big|\; \ux^\alpha\in B\}
     \end{displaymath}
     is fintie and $f\sim_c g$ for any $g\in\Kx$ with $v_P(f-g)>d$.
     \lang{

     }In particular, if $\m^{k+1}\subseteq F_{d+1}$, then $f$ is contact $k$-determined.
   \end{corollary}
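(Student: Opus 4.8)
The plan is to revisit the proof of Theorem~\ref{thm:nfce}, keeping track of the precise piecewise degrees at which regular basis monomials can enter. First, $d$ is finite: since $\In_P(f)$ satisfies \AAC, the graded algebra $\gr_P^{AC}\big(T_{\In_P(f)}\big)$ is finite dimensional, so any monomial basis $B$ is a finite set, and $d$ is the maximum over the finite set $\{v_P(\In_P(f))\}\cup\{v_P(\ux^\alpha)\mid\ux^\alpha\in B\}$ of non-negative integers. By Lemma~\ref{lem:grpp} we have $\gr_P^{AC}(T_f)=\gr_P^{AC}\big(T_{\In_P(f)}\big)$, so $f$ itself satisfies \AAC; by \eqref{eq:ACAAC} this forces $\tau(f)<\infty$, hence $f$ is finitely contact determined, say $N$-determined with $N=2\cdot\tau(f)-\ord(f)+2$ by \cite[Theorem~2.1]{BGM10}.

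The key observation is that, since $B$ is a $\K$-basis of $\gr_P^{AC}\big(T_{\In_P(f)}\big)$ and every $\ux^\alpha\in B$ satisfies $v_P(\ux^\alpha)\leq d$, the homogeneous piece of $\gr_P^{AC}\big(T_{\In_P(f)}\big)$ in degree $e$ vanishes for every $e>d$; equivalently $F_e=\tj_P^{AC}\big(\In_P(f)\big)_e+F_{e+1}$ for all $e>d$. The idea now is to run the normal form algorithm from the proof of Theorem~\ref{thm:nfce} on $f$ and on $g$ in parallel. Since $v_P(f-g)>d\geq v_P(f)$ we have $\In_P(g)=\In_P(f)$, so $g$ has the same principal part and the same regular basis $B$, and $f\equiv g\pmod{F_{d+1}}$. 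Processing the degrees $d_1<d_2<\cdots$ one at a time: as long as the degree currently being treated is $\leq d$, the two remainders agree modulo $F_{d+1}$, so one can make the same choices and obtains the same coordinate changes $\varphi$, the same unit factors $1+b$, and the same basis coefficients $c_\alpha$; the transformed series again agree modulo $F_{d+1}$, since the transformations preserve $v_P$ up to a unit. Once a degree $e>d$ is reached, the displayed equality shows that no basis monomial of degree $e$ occurs, so the basis part of the normal form is already complete for both series, and every further step strictly raises the piecewise valuation of the remaining tail. By \eqref{eq:filtration:2} the tail eventually lies in $\m^{N+1}$, so $f$ and $g$ are both contact equivalent to the same polynomial $\In_P(f)+\sum_{\ux^\alpha\in B,\,v_P(\ux^\alpha)>v_P(\In_P(f))}c_\alpha\ux^\alpha$, using $N$-determinacy. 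Hence $f\sim_c g$.

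For the last assertion, if $\m^{k+1}\subseteq F_{d+1}$ and $g$ coincides with $f$ up to order $k$, then $f-g\in\m^{k+1}\subseteq F_{d+1}$, i.e.\ $v_P(f-g)>d$, so $f\sim_c g$ by what we just proved; thus $f$ is contact $k$-determined. The proof of Corollary~\ref{cor:fdre} is verbatim the same, with $T_f$, $\tj$, \AAC\ and \eqref{eq:ACAAC} replaced by $M_f$, $\jj$, \AAa\ and \eqref{eq:AAA}.

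The main obstacle I expect is the bookkeeping in the parallel run: one has to check that through the degrees $\leq d$ the two series stay congruent modulo $F_{d+1}$ and that the (non-canonical) choices of $b_0$, $\xi$ and $h'$ in expressing a principal part through the regular basis can be made consistently for $f$ and for $g$. Beyond degree $d$ the basis contributes nothing, so the remaining steps are forced and cause no further trouble; the only external input needed there is the finiteness of $N$, which we already secured from $\tau(f)<\infty$.
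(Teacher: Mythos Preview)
Your proposal is correct and follows essentially the same approach as the paper's own proof: revisit the algorithm from Theorem~\ref{thm:nfce} and observe that the coefficients $c_\alpha$ of the normal form depend only on the part of the input of piecewise valuation at most $d$, so that $f$ and any $g$ with $v_P(f-g)>d$ produce the same normal form. The paper phrases this in one sentence (``terms of piecewise valuation larger than $d$ do not have any influence on the coefficients $c_\alpha$''), while you spell it out as a parallel run; the only point you should make precise in a final write-up is the claim that the automorphism $\varphi$ and the unit $1+b$ send $F_{d+1}$ to itself, which follows from \eqref{eq:derivation:2} applied to the difference $f-g$ (and the fact that $F_{d+1}$ is an ideal).
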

   We present the proof for these corollaries only in the case of contact equivalence.
   \begin{proof}
     By Corollary~\ref{cor:nfce} $f$ is
     finitely determined, and thus by \cite[Theorem~2.5]{BGM10} some
     power of $\m$ lies in $\langle 
     f\rangle +\m\cdot \jj(f)$, so that we are in the situation of
     Theorem~\ref{thm:nfce}. The proof of Theorem~\ref{thm:nfce} shows that
     \begin{displaymath}
       f\;\sim_c\;\In_P(f)+\sum_{\stackrel{\ux^\alpha\in B}{v_P(\ux^\alpha)>v_P(\In_P(f))}}c_\alpha \ux^\alpha + g_l
     \end{displaymath}
     for suitable $c_\alpha\in\K$ and with $g_l\in F_l$ for $l$
     arbitrarily large. Moreover, in the 
     process of constructing the transformations we see that terms of
     piecewise valuation larger than $d$ do not have any influence on the
     coefficients $c_\alpha$ of the above normal form. Thus any power
     series $g$ which coincides with $f$ up to valuation $d$ will give
     the same normal form and is thus contact equivalent to $f$. 
   \end{proof}

   The determinacy bounds from Corollaries \ref{cor:fdre} resp.\
   \ref{cor:fdce} for power series satisfying conditions \AAa\ resp.\
   \AC\ are in general much better than those for arbitrary isolated
   singularities given in \cite{BGM10} (see Example \ref{ex:q10}),
   hence the conditions \AAa\  and \AAC\  are
   desirable. In the following we give many examples of
   power series satisfying these conditions. We will first consider
   quasihomogeneous polynomials.

   \begin{proposition}\label{prop:qhgr}
     If $f\in\K[\ux]$ is \qh\  of type $(w;d)$ and $P$ is the
     $C$-polytope defined by the single weight vector $w$, then $f$
     satisfies the conditions \Aa\  and \AC\  with respect to $P$.
   \end{proposition}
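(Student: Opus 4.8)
The plan is to reduce condition \Aa\ (and analogously \AC) to the homogeneous structure of the ideals $\jj(f)$ and $\tj(f)$, which we already have at hand from Corollary~\ref{cor:qhgr}. Since $f$ is \qh\ of type $(w;d)$ and $P$ is the single-facet $C$-polytope defined by $w$, the valuation $v_P$ is (up to the scaling factor $N_P$) just the weighted degree $\deg_w$, and the filtration it induces on $\Kx$ is the usual homogeneous weighted filtration: $F_k$ is spanned by all monomials of weighted degree at least $k$. The key observation is that each partial derivative $f_{x_i}$ is again \qh, of type $(w;d-w_i)$, so $\jj(f)$ is generated by weighted homogeneous elements, and likewise $\tj(f)=\langle f\rangle+\jj(f)$ is generated by weighted homogeneous elements.

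First I would take an arbitrary $g\in\jj(f)$, write $g=\sum_{i=1}^n g_i\cdot f_{x_i}$, and decompose each $g_i=\sum_{j}g_{i,j}$ into its weighted homogeneous parts $g_{i,j}$ of weighted degree $j$. Then $g=\sum_e g^{(e)}$ decomposes into weighted homogeneous parts $g^{(e)}=\sum_i g_{i,e-d+w_i}\cdot f_{x_i}$, and $v_P(g)=\min\{e\mid g^{(e)}\neq 0\}$; call this minimal value $d_0$. Setting $\xi:=\sum_{i} g_{i,d_0-d+w_i}\cdot\partial_{x_i}$, we get $\xi f=g^{(d_0)}$, which is precisely the weighted homogeneous part of $g$ of degree $d_0=v_P(g)$, while $g-\xi f=\sum_{e>d_0}g^{(e)}$ has $v_P(g-\xi f)>d_0$. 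It remains to check $v_P(\xi)=d_0-d$: by construction $v_P(\xi)\geq d_0-d$ since every term $g_{i,d_0-d+w_i}\ux^\alpha\partial_{x_i}$ has $v_P=\deg_w(\ux^\alpha)-w_i\geq(d_0-d+w_i)-w_i=d_0-d$ (using that $v_P(\ux^\alpha\partial_{x_i})=\lambda_P(\alpha-e_i)$); and $v_P(\xi)\leq d_0-d$ because $v_P(\xi)+v_P(f)\geq v_P(\xi f)=v_P(g^{(d_0)})=d_0$ by \eqref{eq:derivation:1}. Hence $v_P(\xi)+v_P(f)=d_0=v_P(g)<v_P(g-\xi f)$, which is exactly condition \Aa. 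The verification for \AC\ is identical after allowing an extra term $g_0\cdot f$ with $g_0$ weighted homogeneous of degree $v_P(h)-d$, since $f$ itself is weighted homogeneous of degree $d$ and $\tj(f)$ is generated by weighted homogeneous elements.

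Alternatively — and this is the cleaner route given the machinery already assembled — one can simply invoke Corollary~\ref{cor:qhgr}(a), which states $\gr_P^A(M_f)=\gr_P(M_f)$ and $\gr_P^{AC}(T_f)=\gr_P(T_f)$ for exactly this $f$ and $P$, together with the remark following Definition~\ref{def:AAAC} that condition \Aa\ is \emph{equivalent} to $\gr_P(M_f)=\gr_P^A(M_f)$ and condition \AC\ to $\gr_P(T_f)=\gr_P^{AC}(T_f)$. So the proof is essentially a one-line consequence of Corollary~\ref{cor:qhgr}. I do not expect any serious obstacle here; the only mild subtlety is the bookkeeping around the scaling factor $N_P$ and the convention $v_P(\ux^\alpha\partial_{x_i})=\lambda_P(\alpha-e_i)$ relating $v_P$ on derivations to weighted degrees, but nothing beyond that. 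Note this proposition does not require $\mu(f)$ or $\tau(f)$ to be finite — the homogeneity argument is purely formal.
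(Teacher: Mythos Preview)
Your proposal is correct, and your ``cleaner route'' is exactly the paper's proof: the paper simply says this was proved in Corollary~\ref{cor:qhgr}(a), relying on the equivalence in the remark after Definition~\ref{def:AAAC}. Your first, direct argument is just an unpacking of the proof of Corollary~\ref{cor:qhgr}(a) itself; one small slip there is that you quote \eqref{eq:derivation:1} as $v_P(\xi)+v_P(f)\geq v_P(\xi f)$ when it actually reads $v_P(\xi f)\geq v_P(\xi)+v_P(f)$ --- but the correct direction is precisely what yields $v_P(\xi)\leq d_0-d$, and in any case the exact equality $v_P(\xi)=d_0-d$ follows directly from the fact that each nonzero $g_{i,d_0-d+w_i}$ is weighted homogeneous of degree exactly $d_0-d+w_i$.
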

   \begin{proof}
     This was proved in from Corollary~\ref{cor:qhgr} (a).
   \end{proof}

   From \eqref{eq:AAA} and \eqref{eq:ACAAC} together with
   \eqref{eq:AAfinf} it follows that any power
   series with an isolated quasihomogeneous principal part satisfies
   \AAa\  and \AAC.


   \begin{corollary}\label{cor:sqh-aa}\label{cor:qhAAAAC}
     If $f\in\Kx$ is \rsqh\  respectively \csqh\ w.r.t.\ $w$ then $f$ is
     \AAa\  respectively \AAC\ w.r.t.\ the $C$-polytope defined by $w$.
   \end{corollary}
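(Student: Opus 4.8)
The plan is to deduce this corollary formally from Proposition~\ref{prop:qhgr} together with the chains of implications \eqref{eq:AAA}, \eqref{eq:ACAAC} and the reduction \eqref{eq:AAfinf}. The first thing I would record is that when $P$ is the $C$-polytope defined by the single weight vector $w$, the piecewise linear function $\lambda_P$ is just a positive scalar multiple of the linear function $r\mapsto w\cdot r$; consequently the monomials of $f$ of minimal $\lambda_P$-value are exactly those of minimal $w$-weight, so $\In_P(f)=\In_w(f)$. Moreover this principal part is a genuine polynomial (for each $d$ the set $\{\alpha\in\N^n\mid w\cdot\alpha=d\}$ is finite since $w$ has positive entries) and it is \qh\ of type $(w;d)$ with $d=\deg_w(\In_w(f))$ by construction, so Proposition~\ref{prop:qhgr} is applicable to it.

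Now write $g:=\In_w(f)=\In_P(f)$. If $f$ is \rsqh\ w.r.t.\ $w$, then by definition $\mu(g)<\infty$. By Proposition~\ref{prop:qhgr}, $g$ satisfies condition \Aa\ w.r.t.\ $P$, so the first implication in \eqref{eq:AAA}, applied to $g$, shows that $g$ satisfies \AAa\ w.r.t.\ $P$. Since $\In_P(f)=g$ satisfies \AAa, the equivalence \eqref{eq:AAfinf} (which rests on Lemma~\ref{lem:grpp}) yields that $f$ satisfies \AAa\ w.r.t.\ $P$. The \csqh\ case is entirely parallel: if $\tau(g)<\infty$, then $g$ satisfies \AC\ w.r.t.\ $P$ by Proposition~\ref{prop:qhgr}, the first implication in \eqref{eq:ACAAC} gives that $g$ satisfies \AAC\ w.r.t.\ $P$, and \eqref{eq:AAfinf} transfers this to $f$.

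There is essentially no obstacle: the statement is a bookkeeping consequence of results already proved. The only points that genuinely need attention are the reductions mentioned in the first paragraph --- that $\In_P(f)$ coincides with the weighted initial form $\In_w(f)$ for a one-facet $C$-polytope, and that this initial form is a quasihomogeneous \emph{polynomial} of the correct type --- so that Proposition~\ref{prop:qhgr} and the implications \eqref{eq:AAA}/\eqref{eq:ACAAC}/\eqref{eq:AAfinf} can be chained together.
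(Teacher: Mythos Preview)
Your proof is correct and follows exactly the route sketched in the paper: apply Proposition~\ref{prop:qhgr} to the quasihomogeneous principal part $g=\In_P(f)=\In_w(f)$ to get conditions \Aa\ resp.\ \AC, combine with $\mu(g)<\infty$ resp.\ $\tau(g)<\infty$ via \eqref{eq:AAA} resp.\ \eqref{eq:ACAAC} to obtain \AAa\ resp.\ \AAC\ for $g$, and then transfer to $f$ by \eqref{eq:AAfinf}. The extra care you take in identifying $\In_P(f)$ with $\In_w(f)$ and checking that it is a \qh\ polynomial is appropriate and only makes the argument more explicit.
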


   \begin{example}[$T_{45}$-Singularity in characteristic $2$]\label{ex:t45-2}
     The condition in Corollary~\ref{cor:qhAAAAC} that the $C$-polytope has only
     \emph{one} facet, i.e.\ that the 
     principal part is quasihomogeneous, is essential. Let
     $\Char(\K)=2$ and $f=x^5+x^2y^2+y^4+x^3y^2\in\K[[x,y]]$. Then $f$
     is \csph\  with respect to $P=\Gamma(f)$ with principal part
     $\In_P(f)=x^5+x^2y^2+y^4$ and 
     $\tau\big(\In_P(f)\big)=16$. However, $\tau(f)=\infty$, which is an
     alternative proof of the fact $f$ is not \AAC, as we have
     already seen in Example~\ref{ex:t45}.\hfill$\Box$
   \end{example}

   \begin{remark}\label{rem:snnd}
     In \cite{Wal99} Wall introduces the notion of inner Newton
     non-degeneracy which turns out to be a sufficient condition for
     \AAa\  and \AAC. Let us recall the definition here.
     A face $\Delta$ of $P$ is called an \emph{inner face} if it is not
     contained in any coordinate hyperplane.
     Each point $q\in\K^n$ determines a coordinate hyperspace
     $H_q=\bigcap_{q_i=0}\{x_i=0\}\subseteq\R^n$ in $\R^n$.
     We call $f$ \emph{inner non-degenerate \SND\ along
       $\Delta$} if for no common zero $q$ of $\jj(\In_\Delta(f))$ the
     polytope $\Delta$ contains a point on $H_q$,
     and we call $f$ \emph{inner Newton non-degenerate} \SNND\
     w.r.t.\ $P$ if $f$ is non-degenerate of type \SND\ along each
     \emph{inner} face of $P$. 

     Inner Newton non-degeneracy can be formulated differently so
     that the connection to $\gr_P^A(M_f)$ is more evident.
     Each face $\Delta$ of the Newton diagram of $f$ determines
     a finitely generated semigroup $C_\Delta$ in $\Z^n$ by considering those
     lattice points which lie in the cone over $\Delta$ with the
     origin as base. This semigroup then determines a finitely
     generated $\K$-algebra
     $\K[C_\Delta]=\K[\ux^\alpha|\alpha\in C_\Delta]$ and
     a $\K[C_\Delta]$-module
     \begin{displaymath}
       D_\Delta=\langle
       \ux^\alpha\cdot\partial_{x_i}\;|\;\ux^\alpha\cdot\partial_{x_i}\ux^\gamma\in K[C_\Delta]\;\;\forall\;\gamma\in C_\Delta\rangle_{\K[\C_\Delta]}
     \end{displaymath}
     generated by monomial derivations which leave $\K[C_\Delta]$
     invariant. Applying all elements in $D_\Delta$ to $\In_\Delta(f)$
     leads to an ideal $J_\Delta$ in $\K[C_\Delta]$, and
     Wall then shows that (see \cite[Prop.~2.2]{Wal99})
     \begin{displaymath}
       \dim_\K(\K[C_\Delta]/J_{\Delta})<\infty
       \;\;\;\Longleftrightarrow\;\;\;
       f\mbox{ is \SND\ along all \emph{inner} faces of }\Delta.
     \end{displaymath}
     The rings $\K[C_\Delta]/J_\Delta$ can be stacked neatly in an
     exact sequence of complexes whose homology Wall uses to show that
     (see \cite[Prop.~2.3]{Wal99})
     \begin{displaymath}
       f\mbox{ is \SNND }\;\;\;\Longrightarrow\;\;\;
       \dim_\K\big(\gr_P^A(M_f)\big)<\infty.
     \end{displaymath}
     Wall's arguments use only standard facts from toric geometry and
     homolgoical algebra and do not depend on the characteristic of
     the base field. This proves  Theorem \ref{thm:snnd-aa}, which shows
     that inner Newton non-degenerate singularities possess good
     normal forms w.r.t.\ right equivalence and also w.r.t.\ contact
     equivalence (see Corollaries \ref{cor:nfre} and \ref{cor:nfce}). 

     We refer to \cite{Wal99} and
     \cite[Sec.~3]{BGM10} for more information on inner Newton
     non-degeneracy. 
     \hfill$\Box$
   \end{remark}

   \begin{theorem}[Wall, \cite{Wal99}]\label{thm:snnd-aa}
     If $f\in\Kx$ is \SNND\ w.r.t.\ $P$, then $f$ is \AAa\  and \AAC\
     w.r.t.\ $P$, and hence
     $$\tau(f)\leq\mu(f)<\infty.$$
   \end{theorem}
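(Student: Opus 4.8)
The plan is to derive the theorem from Wall's toric--homological machinery recalled in Remark~\ref{rem:snnd}, after first reducing condition \AAC\ to condition \AAa\ by an elementary observation. For the reduction: from the inclusion $\jj_P^A(f)_d\subseteq\tj_P^{AC}(f)_d$ noted in Section~\ref{sec:piecewisefiltration} we obtain, for each $d\geq 0$, a surjection
$$F_d\big/\big(\jj_P^A(f)_d+F_{d+1}\big)\;\twoheadrightarrow\;F_d\big/\big(\tj_P^{AC}(f)_d+F_{d+1}\big),$$
hence after summing over $d$ a surjection of graded $\K$-algebras $\gr_P^A(M_f)\twoheadrightarrow\gr_P^{AC}(T_f)$. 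Consequently $\dim_\K\gr_P^{AC}(T_f)\leq\dim_\K\gr_P^A(M_f)$, so it suffices to show that \SNND\ forces $\dim_\K\gr_P^A(M_f)<\infty$, i.e.\ that $f$ satisfies \AAa; condition \AAC\ then comes for free.

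For \AAa\ I would simply run Wall's argument as outlined in Remark~\ref{rem:snnd}. For each inner face $\Delta$ of $P$ one has the finitely generated $\K$-algebra $\K[C_\Delta]$, the module $D_\Delta$ of monomial derivations leaving it invariant, and the ideal $J_\Delta\subseteq\K[C_\Delta]$ obtained by applying $D_\Delta$ to $\In_\Delta(f)$. Wall's Proposition~2.2 identifies
$$\dim_\K\big(\K[C_\Delta]/J_\Delta\big)<\infty\;\;\Longleftrightarrow\;\;f\text{ is \SND\ along all inner faces of }\Delta,$$
so the hypothesis \SNND\ says exactly that all these quotients are finite-dimensional. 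One then assembles the rings $\K[C_\Delta]/J_\Delta$, as $\Delta$ runs over the faces of $P$, into an exact sequence of complexes (Wall's Proposition~2.3) whose homology computes $\gr_P^A(M_f)$, and finiteness of the constituent pieces forces $\dim_\K\gr_P^A(M_f)<\infty$. The crucial point — and the only thing we add to Wall's result — is that these constructions and the homology computations use nothing but standard toric geometry and homological algebra, never the hypothesis $\Char(\K)=0$, so Wall's proof transfers verbatim to arbitrary characteristic.

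Finally, the numerical statement: once $f$ satisfies \AAa, implication \eqref{eq:AAA} gives $\mu(f)<\infty$, and since $\tj(f)\supseteq\jj(f)$ the Tjurina algebra $T_f$ is a quotient of the Milnor algebra $M_f$, whence $\tau(f)\leq\mu(f)<\infty$ (alternatively $\tau(f)<\infty$ follows from \AAC\ via \eqref{eq:ACAAC}). The main obstacle in making this self-contained is the homological bookkeeping behind Wall's Proposition~2.3 — setting up the exact sequence of complexes of toric modules indexed by the faces of $P$ and verifying that its homology really recovers $\gr_P^A(M_f)$ — but since we are entitled to cite \cite{Wal99} and only need the routine observation that nothing there depends on the characteristic, this need not be reproduced in detail.
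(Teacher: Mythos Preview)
Your proposal is correct and follows essentially the same approach as the paper: the paper's proof is contained in Remark~\ref{rem:snnd}, which cites Wall's Propositions~2.2 and~2.3 to get \AAa\ and observes that Wall's toric and homological arguments are characteristic-free, with \AAC\ following from the inclusion $\jj_P^A(f)_d\subseteq\tj_P^{AC}(f)_d$ exactly as you spell out. Your treatment is in fact slightly more explicit than the paper's about the reduction of \AAC\ to \AAa\ and about the inequality $\tau(f)\leq\mu(f)$, but the substance is identical.
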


   \section{Examples}\label{sec:examples}

   In this section we apply the results of the previous sections to
   the classification of singularities of low modality in positive
   characteristic. A full classification of hypersurface singularities
   of right modality at most $2$ and of contact modality at most $1$
   is still missing in positive characteristic, although a big part of
   this classification was achieved in \cite{GK90} and \cite{Bou02}. 

   \begin{example}[$Q_{10}$-Singularity in characteristic $2$]\label{ex:q10}
     Let $\Char(\K)=2$ and assume that $f\in\K[[x,y,z]]$ is \csqh\  with
     respect to the $C$-polytope $P$ containing $\Gamma(f)$ and with
     principal part $\In_P(f)=x^2z+y^3+z^4$. Using \textsc{Singular}
     (\cite{DGPS10})
     we see that
     \begin{displaymath}
       B=\{1,x,y,z,xy,xz,yz,z^2,xyz,xz^2,yz^2,z^3,xyz^2,xz^3,yz^3,xyz^3\}
     \end{displaymath}
     is a $\K$-vector space basis of $T_{\In_P(f)}$. By Proposition
     \ref{prop:qhgr} 
     we see that $B$ is indeed a
     regular basis for $T_f$ and that $f$ is \AC\  with
     \begin{displaymath}
       \dim_\K\big(\gr^{AC}_P(T_f)\big)=\tau\big(\In_P(f)\big)=16.
     \end{displaymath}
     Corollary \ref{cor:nfce} then shows that
     \begin{displaymath}
       f\;\sim_c\; x^2z+y^3+z^4+c_1\cdot xyz^2 + c_2\cdot xz^3 +
       c_3\cdot yz^3+ c_4\cdot xyz^3
     \end{displaymath}
     for some $c_1,\ldots,c_4\in\K$. Moreover, using the weight vector
     $w=(9,8,6)$ to determine the filtration induced by $P$ then
     \begin{displaymath}
       \max\{v_P(f),v_P(b)\;|\;b\in B\}=35
     \end{displaymath}
     and an easy computation shows that $\m^6\in F_{36}$. Thus $f$ is
     contact $5$-determined, and this bound of determinacy is better than the
     one obtained from \cite[Theorem~2.1]{BGM10}, which
     would be $11$.
     \begin{figure}[h]
       \centering
       \setlength{\unitlength}{0.4mm}
       \begin{picture}(50,90)(-30,-30)
         \linethickness{0.1mm}\scriptsize
         \thicklines\dashline[+30]{3}(0,0)(38,0)
         \thicklines\dashline[+30]{3}(0,0)(0,40)
         \thicklines\dashline[+30]{3}(0,0)(-12,-12)
         \thicklines\drawline[12](38,0)(50,0)
         \thicklines\drawline[12](0,40)(0,60)
         \thicklines\drawline[12](-12,-12)(-30,-30)
         \thicklines\drawline[12](38,0)(0,40)
         \thicklines\drawline[12](-12,-12)(38,0)
         \thicklines\drawline[12](-12,-12)(0,40)
         \put(0,40){\circle*{4}}
         \put(-5,18){\circle*{4}}
         \put(37,0){\circle*{4}}
         \put(-34,-30){$x$}
         \put(50,-4){$y$}
         \put(-4,60){$z$}
       \end{picture}       
       \caption{The Newton diagram of $xz^2+y^3+z^4$.}
       \label{fig:npspace}
     \end{figure}
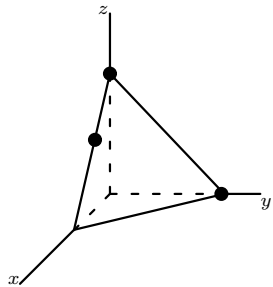
     \hfill$\Box$
   \end{example}

   When checking if certain monomials are zero in $\gr_P^A(M_f)$
   respectively $\gr_P^{AC}(T_f)$ the following lemma is very
   helpful. 

   \begin{lemma}\label{lem:cone}
     Let $P$ be a $C$-polytope, $\Delta$ a facet of $P$ and
     $f\in\Kx$. Moreover, denote by $C_\Delta$ the cone over
     $\Delta$, and assume that $\alpha,\beta\in
     C_\Delta\cap\Z^n$. 
     If $\ux^\alpha$ is zero in $\gr_P^A(M_f)$ respectively in
     $\gr_P^{AC}(T_f)$ then $\ux^{\alpha+\beta}$ is so.
   \end{lemma}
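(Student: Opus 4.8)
The plan is to unwind what it means for $\ux^\alpha$ to vanish in $\gr_P^A(M_f)$, multiply the witnessing relation by $\ux^\beta$, and check that the relevant valuation estimates survive. Writing $d=v_P(\ux^\alpha)$, the hypothesis that $\ux^\alpha$ is zero in $\gr_P^A(M_f)$ says precisely that $\ux^\alpha\in\jj_P^A(f)_d+F_{d+1}$, so I would begin by fixing a derivation $\xi\in\Der_\K(\Kx)$ with $v_P(\xi)+v_P(f)\geq d$ and a power series $h\in F_{d+1}$ such that $\ux^\alpha=\xi f+h$.

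The key point is that, since $\alpha$ and $\beta$ both lie in the cone $C_\Delta$ over the facet $\Delta$, the piecewise linear function $\lambda_P$ agrees on $C_\Delta$ with the single linear function $\lambda_\Delta$; equivalently, by \eqref{eq:vp=vd}, $v_P(\ux^{\alpha+\beta})=v_P(\ux^\alpha)+v_P(\ux^\beta)$. Putting $e=v_P(\ux^\beta)$, so that $v_P(\ux^{\alpha+\beta})=d+e$, I would multiply $\ux^\alpha=\xi f+h$ by $\ux^\beta$ to obtain
\begin{displaymath}
  \ux^{\alpha+\beta}=(\ux^\beta\xi)f+\ux^\beta h,
\end{displaymath}
with $\ux^\beta\xi\in\Der_\K(\Kx)$ again a derivation. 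Since $\lambda_P$ is a minimum of linear functions it is superadditive, so $v_P(\ux^\beta\xi)\geq e+v_P(\xi)$ and hence $v_P(\ux^\beta\xi)+v_P(f)\geq e+d=v_P(\ux^{\alpha+\beta})$; this places $(\ux^\beta\xi)f$ in $\jj_P^A(f)_{d+e}$. At the same time $v_P(\ux^\beta h)\geq e+v_P(h)\geq d+e+1$, so $\ux^\beta h\in F_{d+e+1}$, and therefore $\ux^{\alpha+\beta}\in\jj_P^A(f)_{d+e}+F_{d+e+1}$, i.e.\ $\ux^{\alpha+\beta}$ is zero in $\gr_P^A(M_f)$.

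For the Tjurina statement the argument is identical, carrying the extra summand along: if $\ux^\alpha$ is zero in $\gr_P^{AC}(T_f)$ one writes $\ux^\alpha=g_0f+\xi f+h$ with $\min\{v_P(g_0)+v_P(f),v_P(\xi)+v_P(f)\}\geq d$ and $h\in F_{d+1}$, and multiplying by $\ux^\beta$ and using in addition $v_P(\ux^\beta g_0)\geq e+v_P(g_0)$ puts $(\ux^\beta g_0)f+(\ux^\beta\xi)f$ into $\tj_P^{AC}(f)_{d+e}$ and $\ux^\beta h$ into $F_{d+e+1}$, giving the claim again.

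The one point that really needs care — and the reason the hypothesis $\alpha,\beta\in C_\Delta$ is imposed — is the equality $v_P(\ux^{\alpha+\beta})=v_P(\ux^\alpha)+v_P(\ux^\beta)$: for arbitrary exponents only ``$\geq$'' holds, and then the two summands produced above could have piecewise valuation strictly below $v_P(\ux^{\alpha+\beta})$ and cancel, destroying the membership in $\jj_P^A(f)_{d+e}+F_{d+e+1}$. Lying in a common cone $C_\Delta$ is exactly what forbids this, via \eqref{eq:vp=vd}; the remaining ingredients are just the multiplicativity behaviour of $v_P$ on products and on monomial derivations recorded in Section~\ref{sec:piecewisefiltration}.
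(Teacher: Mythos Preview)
Your argument is correct. The only difference from the paper is one of packaging: the paper invokes the graded \emph{algebra} structure on $\gr_P^A(M_f)$ and $\gr_P^{AC}(T_f)$, noting via \eqref{eq:prodmon} that the class of $\ux^{\alpha+\beta}$ equals the product of the classes of $\ux^\alpha$ and $\ux^\beta$ (precisely because $\alpha,\beta\in C_\Delta$ forces additivity of $v_P$ by \eqref{eq:vp=vd}), and then concludes since one factor is zero. Your proof unwinds this product computation explicitly at the level of representatives; in effect you are reproving, for the particular elements at hand, that $(\jj_P^A(f)_d+F_{d+1})\cdot F_e\subseteq\jj_P^A(f)_{d+e}+F_{d+e+1}$, which is exactly the well-definedness of multiplication in the graded algebra. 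Both routes hinge on the same key point you correctly isolate: the hypothesis $\alpha,\beta\in C_\Delta$ is needed to get the \emph{equality} $v_P(\ux^{\alpha+\beta})=v_P(\ux^\alpha)+v_P(\ux^\beta)$ rather than just an inequality.
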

   \begin{proof}
     Since $\alpha$ and $\beta$ belong to the same cone $C_\Delta$
     Equation \eqref{eq:vp=vd} shows that
     \begin{displaymath}
       v_P\big(\ux^{\alpha+\beta}\big)=v_P\big(\ux^\alpha\big)+v_P\big(\ux^\beta\big).
     \end{displaymath}
     Thus in the graded algebra $\gr_P^A(M_f)$ respectively
     $\gr_P^{AC}(T_f)$ the class of $\ux^{\alpha+\beta}$ is the
     product of the classes of $\ux^{\alpha}$ and $\ux^{\beta}$ (see \eqref{eq:prodmon}). Since
     the former is zero by assumption so is the product. \tom{(See also\
     \cite[Lemma~3.2.3]{Bou09}.) }
   \end{proof}

   \begin{remark}
     With the notation and assumptions of Lemma~\ref{lem:cone} it
     follows that all monomials corresponding to the cone
     $\alpha+C_{\Delta}$ vanish in $\gr_P^A(M_f)$ respectively in
     $\gr_P^{AC}(T_f)$.
     \begin{figure}[h]
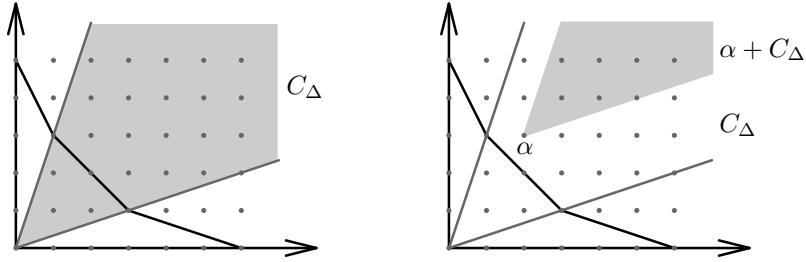

       \centering
       \begin{texdraw}
         \arrowheadtype t:V
         \drawdim cm \relunitscale 0.5 \linewd 0.06 
         \setgray 1         
         \move (0 0) \lvec (2 6) \lvec (7 6) \lvec (7 2.33) 
         \lvec (0 0) \lfill f:0.8
         \setgray 0
         \move (0 0) \avec (8 0)
         \move (0 0) \avec (0 6.5)
         \move (0 5) \lvec (1 3) \lvec (3 1) \lvec (6 0)
         \setgray 0.4
         \move (0 0) \lvec (2 6)
         \move (0 0) \lvec (7 2.33)
         \setgray 0
         \htext (7.2 4){$C_\Delta$}         
         \move (0 0) \fcir f:0.4 r:0.06 
         \move (1 0) \fcir f:0.4 r:0.06 
         \move (2 0) \fcir f:0.4 r:0.06 
         \move (3 0) \fcir f:0.4 r:0.06 
         \move (4 0) \fcir f:0.4 r:0.06 
         \move (5 0) \fcir f:0.4 r:0.06 
         \move (6 0) \fcir f:0.4 r:0.06 
         \move (0 1) \fcir f:0.4 r:0.06 
         \move (1 1) \fcir f:0.4 r:0.06 
         \move (2 1) \fcir f:0.4 r:0.06 
         \move (3 1) \fcir f:0.4 r:0.06 
         \move (4 1) \fcir f:0.4 r:0.06 
         \move (5 1) \fcir f:0.4 r:0.06 
         \move (6 1) \fcir f:0.4 r:0.06 
         \move (0 2) \fcir f:0.4 r:0.06 
         \move (1 2) \fcir f:0.4 r:0.06 
         \move (2 2) \fcir f:0.4 r:0.06 
         \move (3 2) \fcir f:0.4 r:0.06 
         \move (4 2) \fcir f:0.4 r:0.06 
         \move (5 2) \fcir f:0.4 r:0.06 
         \move (6 2) \fcir f:0.4 r:0.06 
         \move (0 3) \fcir f:0.4 r:0.06 
         \move (1 3) \fcir f:0.4 r:0.06 
         \move (2 3) \fcir f:0.4 r:0.06 
         \move (3 3) \fcir f:0.4 r:0.06 
         \move (4 3) \fcir f:0.4 r:0.06 
         \move (5 3) \fcir f:0.4 r:0.06 
         \move (6 3) \fcir f:0.4 r:0.06 
         \move (0 4) \fcir f:0.4 r:0.06 
         \move (1 4) \fcir f:0.4 r:0.06 
         \move (2 4) \fcir f:0.4 r:0.06 
         \move (3 4) \fcir f:0.4 r:0.06 
         \move (4 4) \fcir f:0.4 r:0.06 
         \move (5 4) \fcir f:0.4 r:0.06 
         \move (6 4) \fcir f:0.4 r:0.06 
         \move (0 5) \fcir f:0.4 r:0.06 
         \move (1 5) \fcir f:0.4 r:0.06 
         \move (2 5) \fcir f:0.4 r:0.06 
         \move (3 5) \fcir f:0.4 r:0.06 
         \move (4 5) \fcir f:0.4 r:0.06 
         \move (5 5) \fcir f:0.4 r:0.06 
         \move (6 5) \fcir f:0.4 r:0.06          
       \end{texdraw}
       \hspace{1.5cm}
       \begin{texdraw}
         \arrowheadtype t:V
         \drawdim cm \relunitscale 0.5 \linewd 0.06 
         \setgray 0.8         
         \move (2 3) \lvec (3 6) \lvec (7 6) \lvec (7 4.66) 
         \lvec (2 3) \lfill f:0.8
         \setgray 0
         \move (0 0) \avec (8 0)
         \move (0 0) \avec (0 6.5)
         \move (0 5) \lvec (1 3) \lvec (3 1) \lvec (6 0)
         \setgray 0.4
         \move (0 0) \lvec (2 6)
         \move (0 0) \lvec (7 2.33)
         \setgray 0
         \htext (1.8 2.5){$\alpha$}
         \htext (7.2 5){$\alpha+C_\Delta$}         
         \htext (7.2 3){$C_\Delta$}         
         \move (0 0) \fcir f:0.4 r:0.06 
         \move (1 0) \fcir f:0.4 r:0.06 
         \move (2 0) \fcir f:0.4 r:0.06 
         \move (3 0) \fcir f:0.4 r:0.06 
         \move (4 0) \fcir f:0.4 r:0.06 
         \move (5 0) \fcir f:0.4 r:0.06 
         \move (6 0) \fcir f:0.4 r:0.06 
         \move (0 1) \fcir f:0.4 r:0.06 
         \move (1 1) \fcir f:0.4 r:0.06 
         \move (2 1) \fcir f:0.4 r:0.06 
         \move (3 1) \fcir f:0.4 r:0.06 
         \move (4 1) \fcir f:0.4 r:0.06 
         \move (5 1) \fcir f:0.4 r:0.06 
         \move (6 1) \fcir f:0.4 r:0.06 
         \move (0 2) \fcir f:0.4 r:0.06 
         \move (1 2) \fcir f:0.4 r:0.06 
         \move (2 2) \fcir f:0.4 r:0.06 
         \move (3 2) \fcir f:0.4 r:0.06 
         \move (4 2) \fcir f:0.4 r:0.06 
         \move (5 2) \fcir f:0.4 r:0.06 
         \move (6 2) \fcir f:0.4 r:0.06 
         \move (0 3) \fcir f:0.4 r:0.06 
         \move (1 3) \fcir f:0.4 r:0.06 
         \move (2 3) \fcir f:0.4 r:0.06 
         \move (3 3) \fcir f:0.4 r:0.06 
         \move (4 3) \fcir f:0.4 r:0.06 
         \move (5 3) \fcir f:0.4 r:0.06 
         \move (6 3) \fcir f:0.4 r:0.06 
         \move (0 4) \fcir f:0.4 r:0.06 
         \move (1 4) \fcir f:0.4 r:0.06 
         \move (2 4) \fcir f:0.4 r:0.06 
         \move (3 4) \fcir f:0.4 r:0.06 
         \move (4 4) \fcir f:0.4 r:0.06 
         \move (5 4) \fcir f:0.4 r:0.06 
         \move (6 4) \fcir f:0.4 r:0.06 
         \move (0 5) \fcir f:0.4 r:0.06 
         \move (1 5) \fcir f:0.4 r:0.06 
         \move (2 5) \fcir f:0.4 r:0.06 
         \move (3 5) \fcir f:0.4 r:0.06 
         \move (4 5) \fcir f:0.4 r:0.06 
         \move (5 5) \fcir f:0.4 r:0.06 
         \move (6 5) \fcir f:0.4 r:0.06          
       \end{texdraw}
       
       \caption{The lattice points in $\alpha+C_\Delta$ correspond to
         monomials $x^\gamma$ vanishing in $\gr_P^A(M_f)$ respectively
         in $\gr_P^{AC}(T_f)$.}
       \label{fig:cone} 
     \end{figure}
   \end{remark}

   \begin{example}[$T_{pq}$-Singularities]\label{ex:tpq}
     Arnol'd considered in \cite[Example~9.6]{Arn75} the power series
     $f=x^p+\lambda x^2y^2+y^q\in\C[[\ux]]$ with $\lambda\not=0$ and
     $\frac{1}{p}+\frac{1}{q}<\frac{1}{2}$ or equivalently
     \begin{displaymath}
       pq-2p-2q>0.
     \end{displaymath}
     $f$ is \ph\  with respect to its Newton diagram $P=\Gamma(f)$
     depicted in Figure~\ref{fig:nparnold}. If we scale the corresponding weight
     vectors to length $2pq$ instead of one they are
     \begin{displaymath}
       w_1=(2q,pq-2q)\;\;\;\mbox{ and }\;\;\;w_2=(pq-2p,2p),
     \end{displaymath}
     and the piecewise degree of $f$ is then $\deg_P(f)=2pq$.
     \begin{figure}[h]
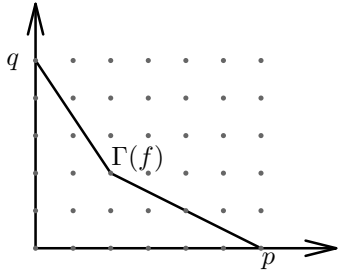

       \centering
       \begin{texdraw}
         \arrowheadtype t:V
         \drawdim cm \relunitscale 0.5 \linewd 0.06 
         \move (0 0) \avec (8 0)
         \move (0 0) \avec (0 6.5)
         \move (0 5) \lvec (2 2) \lvec (6 0)
         \setgray 0
         \htext (-0.8 4.7){$q$}
         \htext (6 -0.6){$p$}
         \htext (2 2){$\Gamma(f)$}         
         \move (0 0) \fcir f:0.4 r:0.06 
         \move (1 0) \fcir f:0.4 r:0.06 
         \move (2 0) \fcir f:0.4 r:0.06 
         \move (3 0) \fcir f:0.4 r:0.06 
         \move (4 0) \fcir f:0.4 r:0.06 
         \move (5 0) \fcir f:0.4 r:0.06 
         \move (6 0) \fcir f:0.4 r:0.06 
         \move (0 1) \fcir f:0.4 r:0.06 
         \move (1 1) \fcir f:0.4 r:0.06 
         \move (2 1) \fcir f:0.4 r:0.06 
         \move (3 1) \fcir f:0.4 r:0.06 
         \move (4 1) \fcir f:0.4 r:0.06 
         \move (5 1) \fcir f:0.4 r:0.06 
         \move (6 1) \fcir f:0.4 r:0.06 
         \move (0 2) \fcir f:0.4 r:0.06 
         \move (1 2) \fcir f:0.4 r:0.06 
         \move (2 2) \fcir f:0.4 r:0.06 
         \move (3 2) \fcir f:0.4 r:0.06 
         \move (4 2) \fcir f:0.4 r:0.06 
         \move (5 2) \fcir f:0.4 r:0.06 
         \move (6 2) \fcir f:0.4 r:0.06 
         \move (0 3) \fcir f:0.4 r:0.06 
         \move (1 3) \fcir f:0.4 r:0.06 
         \move (2 3) \fcir f:0.4 r:0.06 
         \move (3 3) \fcir f:0.4 r:0.06 
         \move (4 3) \fcir f:0.4 r:0.06 
         \move (5 3) \fcir f:0.4 r:0.06 
         \move (6 3) \fcir f:0.4 r:0.06 
         \move (0 4) \fcir f:0.4 r:0.06 
         \move (1 4) \fcir f:0.4 r:0.06 
         \move (2 4) \fcir f:0.4 r:0.06 
         \move (3 4) \fcir f:0.4 r:0.06 
         \move (4 4) \fcir f:0.4 r:0.06 
         \move (5 4) \fcir f:0.4 r:0.06 
         \move (6 4) \fcir f:0.4 r:0.06 
         \move (0 5) \fcir f:0.4 r:0.06 
         \move (1 5) \fcir f:0.4 r:0.06 
         \move (2 5) \fcir f:0.4 r:0.06 
         \move (3 5) \fcir f:0.4 r:0.06 
         \move (4 5) \fcir f:0.4 r:0.06 
         \move (5 5) \fcir f:0.4 r:0.06 
         \move (6 5) \fcir f:0.4 r:0.06 
       \end{texdraw}
       \caption{The Newton diagram of $x^p+\lambda x^2y^2+y^q$.}
       \label{fig:nparnold}
     \end{figure}
     Arnol'd describes in his paper a geometric procedure to compute a regular
     basis for $M_f$ if the partial derivatives have only two terms as
     in the example, and he deduces that $f$ satisfies condition \Aa\  and
     \AAa\  and  that
     \begin{displaymath}
       B=\{1,x,\ldots,x^p,y,y^2,\ldots,y^{q-1},xy\}
     \end{displaymath}
     is a regular basis for $M_f$. In particular,
     \begin{displaymath}
       \mu(f)=\dim_\K\big(\gr_P^A(M_f)\big)=p+q+1. 
     \end{displaymath}
     Since no monomial in $B$ lies above $\Gamma(f)$ it follows as seen
     in Corollary~\ref{cor:nfre} and~\ref{cor:fdre} that any power series
     whose principal part with respect to the above $P$ coincides with
     $T_{pq}$ actually is right equivalent to $T_{pq}$. 
     
     Arnol'd's arguments actually work for any field $\K$ where the
     \emph{characteristic} is neither two, nor divides $p$ or $q$. If the
     characteristic divides $p$ or $q$ then $\mu(f)=\infty$, and in the
     characteristic two case the Jacobian ideal is generated by the
     monomials $x^{p-1}$ and $y^{q-1}$, so that $\mu(f)=pq$.

     We now want to investigate $f=T_{pq}$ with respect to 
     contact equivalence and the condition \AC, and we first want to show
     that
     \begin{displaymath}
       \Char(\K)\not=2\;\;\;\Longrightarrow\;\;\;f \mbox{
         satisfies \AC\  and hence \AAC\ }.
     \end{displaymath}

     \emph{Assume first that in addition $\Char(\K)$ does neither divide $p$
     nor $q$ nor $pq-2\cdot (p+q)$.} Then $\mu(f)<\infty$ and thus also
     $\tau(f)<\infty$. Moreover, by Corollary~\ref{cor:regularbasis}
     the above $B$ generates $\gr_P^{AC}(T_f)$ and
     \begin{displaymath}
       T_f=\K[[x,y]]/\langle f,f_x,f_y\rangle.
     \end{displaymath}
     It is clear that other than $x^p$ the monomials in $B$ will stay
     linearly independent modulo $\tj(f)$, and all monomials
     $x^iy^j$ which are in $\jj(f)_d+F_{d+1}$ with
     $d=v_P(x^iy^j)$ are also in $\tj(f)_d+F_{d+1}$ (since they are a
     regular basis for $M_f$, see also \cite[Proposition~3.2.14]{Bou09}). To see that
     $f$ satisfies \AC\  it thus suffices to show that there are
     $a,b,c\in\K$ such that 
     \begin{displaymath}
       x^p=a\cdot f+b\cdot x\cdot f_x+c\cdot y\cdot f_y
     \end{displaymath}
     since then
     \begin{displaymath}
       x^p\in\tj(f)_{2pq}\subseteq\tj(f).
     \end{displaymath}
     Considering the coefficients for $x^p$, $x^2y^2$ and $y^p$ this
     leads to a linear system of equations with extended coefficient matrix
     \begin{displaymath}
       M=\left(
         \begin{array}[m]{ccc|c}
           1&p&0&1\\\lambda&2\lambda&2\lambda&0\\1&0&q&0
         \end{array}
       \right).
     \end{displaymath}
     This system is solvable if and only if the equation
     \begin{displaymath}
       \lambda\cdot \big(pq-2\cdot(p+q)\big)=2\lambda
     \end{displaymath}
     has a solution, i.e.\ if the first $3\times 3$-Minor $-\lambda\cdot\big(pq-2\cdot
     (p+q)\big)\not=0$.
     This shows that
     \begin{displaymath}
       p+q=\tau(f)=\dim_{\K}\big(\gr_P^{AC}(T_f)\big),
     \end{displaymath}
     where for the latter equality we take into account that $\tau(f)$
     is a lower bound for the dimension. Moreover,
     \begin{displaymath}
       B'=\{1,x,\ldots,x^{p-1},y,y^2,\ldots,y^{q-1},xy\}
     \end{displaymath}
     is a regular basis for $T_f$ and $f$ satisfies \AC.

     \emph{Assume next that $\Char(\K)$ does neither divide $p$
       nor $q$, but it divides $pq-2\cdot (p+q)$.}
     We have already seen in the first case that the system of linear
     equations with extended coefficient matrix $M$ is not
     solvable under the given hypotheses. It follows that $x^p$ does
     not lie in $\tj(f)$\tom{, since an equation of the form $x^p=A\cdot
       f+B\cdot f_x+C\cdot f_y$ for some power series necessarily
       implies an equation of the above type}. Therefore, 
     $B$ is a regular basis for $T_f$ and
     \begin{displaymath}
       p+q+1=\tau(f)=\dim_{\K}\big(\gr_P^{AC}(T_f)\big).
     \end{displaymath}
     \tom{For the latter we take into account that
     $\dim_\K\big(\gr_P^A(M_f)\big)=p+q+1$ is an upper bound for
     $\dim_{\K}\big(\gr_P^{AC}(T_f)\big)$ and $\tau(f)$ is a lower bound.}

     \emph{Assume now that $\Char(\K)$ divides $p$ but not $q$.} Then
     it is straight forward to see that
     \begin{displaymath}
       \tj(f)=\big\langle x^p,y^q,xy^2,qy^{q-1}-2\lambda x^2y\big\rangle.
     \end{displaymath}
     and thus $B'$ is a $\K$-vector space basis of $T_f$. We claim
     that $B'$ also generates $\gr_P^{AC}(T_f)$, so that $f$ satisfies
     \AC\  with Tjurina number $\tau(f)=p+q$. 
     \begin{figure}[h]
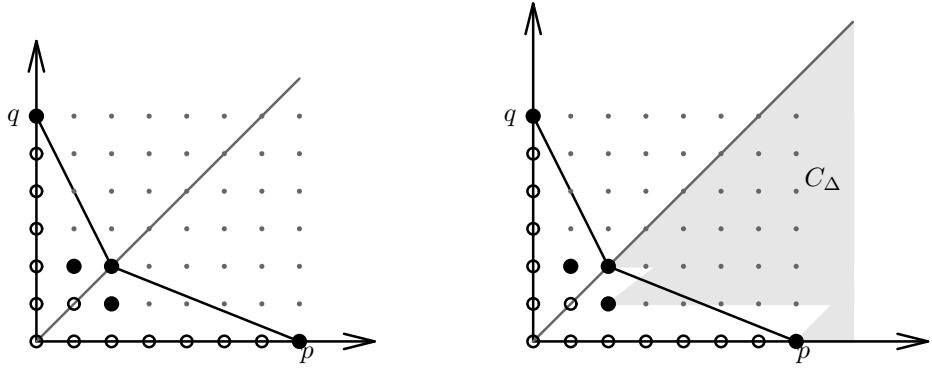

       \centering
       \begin{texdraw}
         \arrowheadtype t:V
         \drawdim cm \relunitscale 0.5 \linewd 0.06 
         \setgray 0
         \move (0 0) \avec (9 0)
         \move (0 0) \avec (0 8)
         \move (0 6) \lvec (2 2) \lvec (7 0)
         \setgray 0.4
         \move (0 0) \lvec (7 7)
         \setgray 0
         \htext (-0.8 5.7){$q$}
         \htext (7 -0.6){$p$}
         \move (0 0) \lcir r:0.15 
         \move (1 0) \lcir r:0.15 
         \move (2 0) \lcir r:0.15 
         \move (3 0) \lcir r:0.15 
         \move (4 0) \lcir r:0.15 
         \move (5 0) \lcir r:0.15 
         \move (6 0) \lcir r:0.15 
         \move (7 0) \fcir f:0 r:0.2 
         \move (0 1) \lcir r:0.15 
         \move (1 1) \lcir r:0.15 
         \move (2 1) \fcir f:0 r:0.2 
         \move (3 1) \fcir f:0.4 r:0.06 
         \move (4 1) \fcir f:0.4 r:0.06 
         \move (5 1) \fcir f:0.4 r:0.06 
         \move (6 1) \fcir f:0.4 r:0.06 
         \move (7 1) \fcir f:0.4 r:0.06 
         \move (0 2) \lcir r:0.15 
         \move (1 2) \fcir f:0 r:0.2 
         \move (2 2) \fcir f:0 r:0.2 
         \move (3 2) \fcir f:0.4 r:0.06 
         \move (4 2) \fcir f:0.4 r:0.06 
         \move (5 2) \fcir f:0.4 r:0.06 
         \move (6 2) \fcir f:0.4 r:0.06 
         \move (7 2) \fcir f:0.4 r:0.06 
         \move (0 3) \lcir r:0.15 
         \move (1 3) \fcir f:0.4 r:0.06 
         \move (2 3) \fcir f:0.4 r:0.06 
         \move (3 3) \fcir f:0.4 r:0.06 
         \move (4 3) \fcir f:0.4 r:0.06 
         \move (5 3) \fcir f:0.4 r:0.06 
         \move (6 3) \fcir f:0.4 r:0.06 
         \move (7 3) \fcir f:0.4 r:0.06 
         \move (0 4) \lcir r:0.15 
         \move (1 4) \fcir f:0.4 r:0.06 
         \move (2 4) \fcir f:0.4 r:0.06 
         \move (3 4) \fcir f:0.4 r:0.06 
         \move (4 4) \fcir f:0.4 r:0.06 
         \move (5 4) \fcir f:0.4 r:0.06 
         \move (6 4) \fcir f:0.4 r:0.06 
         \move (7 4) \fcir f:0.4 r:0.06 
         \move (0 5) \lcir r:0.15 
         \move (1 5) \fcir f:0.4 r:0.06 
         \move (2 5) \fcir f:0.4 r:0.06 
         \move (3 5) \fcir f:0.4 r:0.06 
         \move (4 5) \fcir f:0.4 r:0.06 
         \move (5 5) \fcir f:0.4 r:0.06 
         \move (6 5) \fcir f:0.4 r:0.06          
         \move (7 5) \fcir f:0.4 r:0.06 
         \move (0 6) \fcir f:0 r:0.2 
         \move (1 6) \fcir f:0.4 r:0.06 
         \move (2 6) \fcir f:0.4 r:0.06 
         \move (3 6) \fcir f:0.4 r:0.06 
         \move (4 6) \fcir f:0.4 r:0.06 
         \move (5 6) \fcir f:0.4 r:0.06 
         \move (6 6) \fcir f:0.4 r:0.06          
         \move (7 6) \fcir f:0.4 r:0.06 
       \end{texdraw}
       \hspace{1.5cm}
       \begin{texdraw}
         \arrowheadtype t:V
         \drawdim cm \relunitscale 0.5 \linewd 0.06 
         \setgray 0.9
         \move (2 2) \lvec (8.5 8.5) \lvec (8.5 2) \lvec (2 2) \lfill f:0.9
         \move (2 1) \lvec (8.5 6) \lvec (8.5 1) \lvec (2 1) \lfill f:0.9
         \move (7 0) \lvec (8.5 1.5) \lvec (8.5 0) \lvec (7 0) \lfill f:0.9
         \setgray 0
         \move (0 0) \avec (10.5 0)
         \move (0 0) \avec (0 9)
         \move (0 6) \lvec (2 2) \lvec (7 0)
         \setgray 0.4
         \move (0 0) \lvec (8.5 8.5)
         \setgray 0
         \htext (-0.8 5.7){$q$}
         \htext (7 -0.6){$p$}
         \htext (7.2 4){$C_\Delta$}         
         \move (0 0) \lcir r:0.15 
         \move (1 0) \lcir r:0.15 
         \move (2 0) \lcir r:0.15 
         \move (3 0) \lcir r:0.15 
         \move (4 0) \lcir r:0.15 
         \move (5 0) \lcir r:0.15 
         \move (6 0) \lcir r:0.15 
         \move (7 0) \fcir f:0 r:0.2 
         \move (0 1) \lcir r:0.15 
         \move (1 1) \lcir r:0.15 
         \move (2 1) \fcir f:0 r:0.2 
         \move (3 1) \fcir f:0.4 r:0.06 
         \move (4 1) \fcir f:0.4 r:0.06 
         \move (5 1) \fcir f:0.4 r:0.06 
         \move (6 1) \fcir f:0.4 r:0.06 
         \move (7 1) \fcir f:0.4 r:0.06 
         \move (0 2) \lcir r:0.15 
         \move (1 2) \fcir f:0 r:0.2 
         \move (2 2) \fcir f:0 r:0.2 
         \move (3 2) \fcir f:0.4 r:0.06 
         \move (4 2) \fcir f:0.4 r:0.06 
         \move (5 2) \fcir f:0.4 r:0.06 
         \move (6 2) \fcir f:0.4 r:0.06 
         \move (7 2) \fcir f:0.4 r:0.06 
         \move (0 3) \lcir r:0.15 
         \move (1 3) \fcir f:0.4 r:0.06 
         \move (2 3) \fcir f:0.4 r:0.06 
         \move (3 3) \fcir f:0.4 r:0.06 
         \move (4 3) \fcir f:0.4 r:0.06 
         \move (5 3) \fcir f:0.4 r:0.06 
         \move (6 3) \fcir f:0.4 r:0.06 
         \move (7 3) \fcir f:0.4 r:0.06 
         \move (0 4) \lcir r:0.15 
         \move (1 4) \fcir f:0.4 r:0.06 
         \move (2 4) \fcir f:0.4 r:0.06 
         \move (3 4) \fcir f:0.4 r:0.06 
         \move (4 4) \fcir f:0.4 r:0.06 
         \move (5 4) \fcir f:0.4 r:0.06 
         \move (6 4) \fcir f:0.4 r:0.06 
         \move (7 4) \fcir f:0.4 r:0.06 
         \move (0 5) \lcir r:0.15 
         \move (1 5) \fcir f:0.4 r:0.06 
         \move (2 5) \fcir f:0.4 r:0.06 
         \move (3 5) \fcir f:0.4 r:0.06 
         \move (4 5) \fcir f:0.4 r:0.06 
         \move (5 5) \fcir f:0.4 r:0.06 
         \move (6 5) \fcir f:0.4 r:0.06          
         \move (7 5) \fcir f:0.4 r:0.06 
         \move (0 6) \fcir f:0 r:0.2 
         \move (1 6) \fcir f:0.4 r:0.06 
         \move (2 6) \fcir f:0.4 r:0.06 
         \move (3 6) \fcir f:0.4 r:0.06 
         \move (4 6) \fcir f:0.4 r:0.06 
         \move (5 6) \fcir f:0.4 r:0.06 
         \move (6 6) \fcir f:0.4 r:0.06          
         \move (7 6) \fcir f:0.4 r:0.06 
       \end{texdraw}

       \caption{On the left hand side the elements of $B'$ are
         depicted by large white dots and the elements in $B^c$ are
         depicted by large black dots. The right hand side shows the
         union of $(2,2)+C_{\Delta}$, $(2,1)+C_\Delta$ and
         $(p,0)+C_\Delta$ which covers all lattice points in
         $C_\Delta$ which are not in $B'$.}
       \label{fig:tpqcones} 
     \end{figure}
     By Lemma~\ref{lem:cone} it suffices to check that the monomials
     in
     \begin{displaymath}
       B^c=\big\{x^2y^2,xy^2,x^2y,x^p,y^q\}
     \end{displaymath}
     are zero in $\gr_P^{AC}(T_f)$ (see Figure~\ref{fig:tpqcones}). 
     However, we have that
     \begin{displaymath}
       x^2y^2=\frac{1}{2\lambda}\cdot x\cdot \partial_x f \in\tj(f)_{2pq}
     \end{displaymath}
     and
     \begin{displaymath}
       x^p=f-\left(\frac{1}{2}+\frac{1}{q}\right)\cdot
       x\cdot\partial_x f-\frac{1}{q}\cdot y\cdot\partial_y f
       \in\tj(f)_{2pq}.
     \end{displaymath}
     Moreover, $v_P(xy^2)=pq+2q$ and $v_P(\partial_x)=2p-pq$ so that
     \begin{displaymath}
       xy^2=\frac{1}{2\lambda}\cdot \partial_x f\in \tj(f)_{pq+2p}.
     \end{displaymath}
     Similar arguments hold for $x^2y$ and $y^q$. This finishes this
     case.

     \emph{Assume now that $\Char(\K)$ divides $q$ but not $p$.}
     This case follows by symmetry from the previous case, i.e.\ $f$
     is \AC\  with Tjurina number $\tau(f)=p+q$.

     \emph{Assume finally that $\Char(\K)$ divides both $p$ and $q$.} 
     Then $\tj(f)=\langle xy^2,x^2y,x^p+y^q,x^{p+1},y^{q+1}\rangle$
     and $B$ is a $\K$-vector space basis of $T_f$. Moreover, we claim that it
     is a regular basis for $T_f$ as well. By Lemma~\ref{lem:cone} it
     suffices to show that the monomials $x^{p+1}$, $y^{q+1}$,
     $x^2y^2$, $xy^2$ and $x^2y$ as well as the binomial $x^p+y^q$ are
     zero in $\gr_P^{AC}(T_f)$. This can be achieved in the same way
     as above. In particular we have
     \begin{displaymath}
       p+q+1=\tau(f)=\dim_\K\big(\gr_P^{AC}(T_f)\big)
     \end{displaymath}
     and $f$ satisfies \AC.

     \emph{Conclusion:} In each of the above cases the regular basis
     $B$ respectively $B'$ consists of monomials on or below the
     Newton diagram $P=\Gamma(f)$. Therefore, the normal form
     algorithm shows that any power series with principal part $f$
     with respect to $P$ has indeed $f$ as normal form.\hfill$\Box$
   \end{example}

   \begin{corollary}[Normal form of $T_{pq}$-Singularities]\label{cor:tpq}
     Suppose that $\Char(\K)\not=2$ and let $f\in\K[[x,y]]$ be a power
     series with $\In_{\Gamma(f)}=x^p+\lambda\cdot x^2y^2+y^q$,
     $\frac{1}{p}+\frac{1}{q}<\frac{1}{2}$ and $\lambda\not=0$. Then $f$ is \AC\  and 
     \begin{displaymath}
       f\;\sim_c\;x^p+\lambda\cdot x^2y^2+y^q.
     \end{displaymath}
     The contact determinacy of $f$ is $\max\{p,q\}$.
   \end{corollary}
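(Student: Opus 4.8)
The plan is to read the statement off the case-by-case analysis already carried out in Example~\ref{ex:tpq} and to feed its output into the normal form Corollary~\ref{cor:nfce} and the determinacy bound of Corollary~\ref{cor:fdce}. Put $P=\Gamma(f)$ and $f_0=\In_P(f)=x^p+\lambda x^2y^2+y^q$, and scale the two weight vectors of $P$ to $w_1=(2q,pq-2q)$ and $w_2=(pq-2p,2p)$, so that $v_P(f_0)=2pq$; here $\tfrac1p+\tfrac1q<\tfrac12$ is exactly $pq-2p-2q>0$, which guarantees that $(2,2)$ is a vertex of $\Gamma(f)$ and hence that $P$ has the two facets joining $(p,0)$ to $(2,2)$ and $(2,2)$ to $(0,q)$. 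The first step is to recall from Example~\ref{ex:tpq} that, since $\Char(\K)\neq 2$, the polynomial $f_0$ satisfies condition \AC\ (hence \AAC) with respect to $P$, and that in each characteristic case occurring there one may choose a regular basis $B$ (respectively $B'$) of $T_{f_0}$ consisting only of monomials $\ux^\alpha$ lying on or below $\Gamma(f)$, i.e.\ with $v_P(\ux^\alpha)\leq 2pq$.

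The normal form is then immediate. By \eqref{eq:AAfinf} the series $f$ also satisfies \AAC\ (this depends only on $\In_P(f)=f_0$), so Corollary~\ref{cor:nfce} gives
\begin{displaymath}
  f\;\sim_c\;\In_P(f)+\sum_{\stackrel{\ux^\alpha\in B}{v_P(\ux^\alpha)>2pq}}c_\alpha\ux^\alpha\;=\;f_0,
\end{displaymath}
the sum being empty because every basis monomial satisfies $v_P(\ux^\alpha)\leq 2pq$. Since $\tau$ is a contact invariant this yields $\tau(f)=\tau(f_0)$; together with $\gr_P^{AC}(T_f)=\gr_P^{AC}(T_{f_0})$ from Lemma~\ref{lem:grpp} and $\dim_\K\big(\gr_P^{AC}(T_{f_0})\big)=\tau(f_0)$ we obtain $\tau(f)=\dim_\K\big(\gr_P^{AC}(T_f)\big)$, i.e.\ $f$ satisfies \AC.

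For the determinacy I would apply Corollary~\ref{cor:fdce}. Since $B$ lies on or below $\Gamma(f)$ the relevant bound is $d=\max\{v_P(f_0),v_P(\ux^\alpha)\mid\ux^\alpha\in B\}=2pq$, so $f$ is contact $k$-determined as soon as $\m^{k+1}\subseteq F_{2pq+1}$, i.e.\ as soon as every monomial of degree $k+1$ has $v_P>2pq$. Now $\{r\in\R_{\geq 0}^2\mid v_P(r)\leq 2pq\}$ is the union of the triangle cut out by $w_1\cdot r\leq 2pq$, with vertices $(0,0),(p,0),\big(0,\tfrac{2p}{p-2}\big)$, and the triangle cut out by $w_2\cdot r\leq 2pq$, with vertices $(0,0),\big(\tfrac{2q}{q-2},0\big),(0,q)$; an elementary computation using $\tfrac1p+\tfrac1q<\tfrac12$ shows that the linear form $i+j$ attains its maximum over these vertices --- hence over all lattice points with $v_P\leq 2pq$ --- precisely at $(p,0)$ or $(0,q)$, with value $\max\{p,q\}$. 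Consequently $\m^{\max\{p,q\}+1}\subseteq F_{2pq+1}$ and $f$ is contact $\max\{p,q\}$-determined.

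Finally, sharpness: assuming say $q\geq p$, which forces $q\geq 5$ via $\tfrac1p+\tfrac1q<\tfrac12$, the series $g=x^p+\lambda x^2y^2=x^2\cdot(x^{p-2}+\lambda y^2)$ satisfies $f_0-g=y^q\in\m^q$, so $g$ coincides with $f_0$ up to order $q-1$; but $g$ is divisible by $x^2$ and hence defines a non-isolated singularity with $\tau(g)=\infty>\tau(f_0)$, so $g\not\sim_c f_0$. Thus $f_0$, and therefore $f$ (contact $k$-determinacy being preserved under contact equivalence), fails to be contact $(q-1)$-determined; the case $p>q$ is symmetric. The only step requiring genuine work beyond quoting Example~\ref{ex:tpq} and the two corollaries is the lattice-point computation pinning the determinacy down to exactly $\max\{p,q\}$ --- elementary, but slightly fiddly because a handful of small sub-cases of $(p,q)$ must be checked against $\tfrac1p+\tfrac1q<\tfrac12$.
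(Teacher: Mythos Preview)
Your proof is correct and follows essentially the same route as the paper: invoke Example~\ref{ex:tpq} for the regular basis lying on or below $\Gamma(f)$, apply Corollary~\ref{cor:nfce} for the normal form (the sum being empty), use Corollary~\ref{cor:fdce} for the determinacy upper bound, and show sharpness by deleting the top pure power so as to produce a non-reduced, hence non-isolated, singularity. Your treatment is in fact more careful than the paper's in two places: you spell out why $f$ itself (not just $f_0=\In_P(f)$) satisfies \AC\ via $\tau(f)=\tau(f_0)$ and Lemma~\ref{lem:grpp}, and you make the inclusion $\m^{\max\{p,q\}+1}\subseteq F_{2pq+1}$ explicit through the vertex computation, whereas the paper simply asserts both.
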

   \begin{proof}
     That $f$ satisfies \AC\  and is contact equivalent to its
     principal part was shown in Example~\ref{ex:tpq}. It is obvious
     that any monomial above the Newton diagram has stricly larger
     piecewise valuation than $f$. Using the
     notation from Example~\ref{ex:tpq} it follows that
     $\m^{k+1}\subseteq F_{2pq+1}$ for $k=\max\{p,q\}$, so that by
     Corollary~\ref{cor:fdce} the degree of contact determinacy is at
     most $k$. To see that it cannot be less we may assume the
     contrary and we may assume moreover that $p\geq q$. Then $f\sim_c
     \In_{\Gamma(f)}(f)-x^p=\lambda\cdot x^2y^2+y^q$, but the latter is
     non-reduced and has thus infinite Tjurina number. This is clearly
     a contradiction.
   \end{proof}

   \begin{remark}
     If $\Char(\K)=2$ neither the conclusion in
     Corollary~\ref{cor:tpq} nor the investigation in
     Example~\ref{ex:tpq} hold 
     in general as we can see from Example~\ref{ex:t45}. 
   \end{remark}

   For the $T_{pq}$-Singularities we considered the conditions \Aa\  and
   \AC\  and deduced a normal form. However, for normal forms we only
   need a good way to choose a 
   \emph{small} regular basis for $M_{\In_P(f)}$ respectively $T_{\In_P(f)}$. There the
   following observations are useful.

   \begin{remark}\label{rem:aaaac}
     Each $C$-polytope $P$ has only finitely many zero-dimensional
     faces and each facet is the convex hull of some of these. The cones over
     these zero-dimensional faces are rays, and for each facet
     $\Delta$ of $P$ the cone $C_\Delta$ is spanned by a finite number
     of these rays, none of which is superfluous, i.e.\ they are the
     extremal rays of the cone.

     \emph{Then $f\in\Kx$ satisfies condition \AAa\  respectively
       \AAC\ w.r.t.\ $P$  if and
     only if on each ray spanned by a zero-dimensional face of $P$
     there is a lattice point $\alpha$ such 
     that $\ux^\alpha$ is zero in $\gr_P^A(M_f)$ respectively in
     $\gr_P^{AC}(T_f)$.}
   \end{remark}
   \begin{proof}
     Consider first the two-dimensional situation such that each cone $C_\Delta$ is
     spanned by two rays. Suppose that a cone $C_\Delta$ is given and it
     is spanned by the rays $r$ and $s$, and suppose that $\alpha$ is
     a lattice point on $r$ and $\beta$ is a lattice point on $s$.
     \begin{figure}[h]
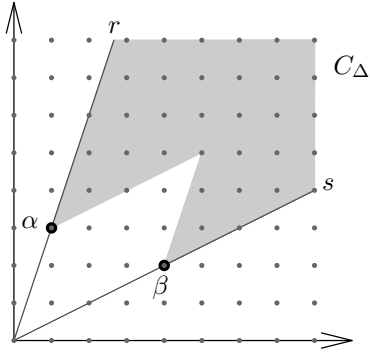

       \centering
       \begin{texdraw}
         \arrowheadtype t:V
         \drawdim cm \relunitscale 0.5 \linewd 0.04 
         \move (0 0) \avec (9 0) \move (0 0) \avec (0 9)
         \setgray 0.8
         \move (4 2) \lvec (8 4) \lvec (8 8) \lvec (6 8) 
         \lvec (4 2) \lfill f:0.8         
         \move (1 3) \lvec (2.66 8) \lvec (8 8) \lvec (8 6.5) 
         \lvec (1 3) \lfill f:0.8
         \setgray 0.3
         \move (0 0) \lvec (8 4)
         \move (0 0) \lvec (2.66 8)
         \move (1 3) \fcir f:0 r:0.15
         \move (4 2) \fcir f:0 r:0.15
         \htext (2.5 8.2){$r$}
         \htext (8.2 4){$s$}
         \htext (0.2 3){$\alpha$}
         \htext (3.7 1.1){$\beta$}
         \htext (8.5 7){$C_\Delta$}
         \move (0 0) \fcir f:0.4 r:0.06 
         \move (1 0) \fcir f:0.4 r:0.06 
         \move (2 0) \fcir f:0.4 r:0.06 
         \move (3 0) \fcir f:0.4 r:0.06 
         \move (4 0) \fcir f:0.4 r:0.06 
         \move (5 0) \fcir f:0.4 r:0.06 
         \move (6 0) \fcir f:0.4 r:0.06 
         \move (7 0) \fcir f:0.4 r:0.06 
         \move (8 0) \fcir f:0.4 r:0.06 
         \move (0 1) \fcir f:0.4 r:0.06 
         \move (1 1) \fcir f:0.4 r:0.06 
         \move (2 1) \fcir f:0.4 r:0.06 
         \move (3 1) \fcir f:0.4 r:0.06 
         \move (4 1) \fcir f:0.4 r:0.06 
         \move (5 1) \fcir f:0.4 r:0.06 
         \move (6 1) \fcir f:0.4 r:0.06 
         \move (7 1) \fcir f:0.4 r:0.06 
         \move (8 1) \fcir f:0.4 r:0.06 
         \move (0 2) \fcir f:0.4 r:0.06 
         \move (1 2) \fcir f:0.4 r:0.06 
         \move (2 2) \fcir f:0.4 r:0.06 
         \move (3 2) \fcir f:0.4 r:0.06 
         \move (4 2) \fcir f:0.4 r:0.06 
         \move (5 2) \fcir f:0.4 r:0.06 
         \move (6 2) \fcir f:0.4 r:0.06 
         \move (7 2) \fcir f:0.4 r:0.06 
         \move (8 2) \fcir f:0.4 r:0.06 
         \move (0 3) \fcir f:0.4 r:0.06 
         \move (1 3) \fcir f:0.4 r:0.06 
         \move (2 3) \fcir f:0.4 r:0.06 
         \move (3 3) \fcir f:0.4 r:0.06 
         \move (4 3) \fcir f:0.4 r:0.06 
         \move (5 3) \fcir f:0.4 r:0.06 
         \move (6 3) \fcir f:0.4 r:0.06 
         \move (7 3) \fcir f:0.4 r:0.06 
         \move (8 3) \fcir f:0.4 r:0.06 
         \move (0 4) \fcir f:0.4 r:0.06 
         \move (1 4) \fcir f:0.4 r:0.06 
         \move (2 4) \fcir f:0.4 r:0.06 
         \move (3 4) \fcir f:0.4 r:0.06 
         \move (4 4) \fcir f:0.4 r:0.06 
         \move (5 4) \fcir f:0.4 r:0.06 
         \move (6 4) \fcir f:0.4 r:0.06 
         \move (7 4) \fcir f:0.4 r:0.06 
         \move (8 4) \fcir f:0.4 r:0.06 
         \move (0 5) \fcir f:0.4 r:0.06 
         \move (1 5) \fcir f:0.4 r:0.06 
         \move (2 5) \fcir f:0.4 r:0.06 
         \move (3 5) \fcir f:0.4 r:0.06 
         \move (4 5) \fcir f:0.4 r:0.06 
         \move (5 5) \fcir f:0.4 r:0.06 
         \move (6 5) \fcir f:0.4 r:0.06                  
         \move (7 5) \fcir f:0.4 r:0.06 
         \move (8 5) \fcir f:0.4 r:0.06 
         \move (0 6) \fcir f:0.4 r:0.06 
         \move (1 6) \fcir f:0.4 r:0.06 
         \move (2 6) \fcir f:0.4 r:0.06 
         \move (3 6) \fcir f:0.4 r:0.06 
         \move (4 6) \fcir f:0.4 r:0.06 
         \move (5 6) \fcir f:0.4 r:0.06 
         \move (6 6) \fcir f:0.4 r:0.06                  
         \move (7 6) \fcir f:0.4 r:0.06 
         \move (8 6) \fcir f:0.4 r:0.06 
         \move (0 7) \fcir f:0.4 r:0.06 
         \move (1 7) \fcir f:0.4 r:0.06 
         \move (2 7) \fcir f:0.4 r:0.06 
         \move (3 7) \fcir f:0.4 r:0.06 
         \move (4 7) \fcir f:0.4 r:0.06 
         \move (5 7) \fcir f:0.4 r:0.06 
         \move (6 7) \fcir f:0.4 r:0.06                  
         \move (7 7) \fcir f:0.4 r:0.06 
         \move (8 7) \fcir f:0.4 r:0.06 
         \move (0 8) \fcir f:0.4 r:0.06 
         \move (1 8) \fcir f:0.4 r:0.06 
         \move (2 8) \fcir f:0.4 r:0.06 
         \move (3 8) \fcir f:0.4 r:0.06 
         \move (4 8) \fcir f:0.4 r:0.06 
         \move (5 8) \fcir f:0.4 r:0.06 
         \move (6 8) \fcir f:0.4 r:0.06                  
         \move (7 8) \fcir f:0.4 r:0.06 
         \move (8 8) \fcir f:0.4 r:0.06 
       \end{texdraw}
       \caption{$C_\Delta$ almost filled by two shifted copies of $C_\Delta$.}
       \label{fig:shiftedcones}
     \end{figure}
     The shifted rays $\alpha+s$ and $\beta+r$ will intersect, since
     $r$ and $s$ are not parallel, and thus the rays $r$, $s$,
     $\alpha+s$ and $\beta+r$ bound a finite region in the cone
     $C_\Delta$ (see Figure~\ref{fig:shiftedcones}). By
     Lemma~\ref{lem:cone} the lattice points which are not inside the 
     bounded region will be zero in $\gr_P^A(M_f)$ respectively in
     $\gr_P^{AC}(T_f)$. We can play this game for each facet of $P$,
     and thus there are only finitely many monomials whose class is
     not zero.

     The argument generalises right away to higher dimensions.
   \end{proof}

   \begin{corollary}\label{cor:abcd}
     Let $f=x^a+y^b+\lambda\cdot x^cy^d\in\K[[x,y]]$ with
     $\lambda\in\K$, $a>c\geq 1$, $b>d\geq 1$ and $ad+bc<ab$, and let
     $P=\Gamma(f)$. 
     Then $f$ satisfies \AAa\  respectively \AAC\ w.r.t.\ $P$ if and only if there
     are natural numbers $k,m,n$ such that $x^m$, $y^n$ and
     $x^{ck}y^{dk}$ are zero in $\gr_P^A(M_f)$ respectively in
     $\gr_P^{AC}(T_f)$. 
   \end{corollary}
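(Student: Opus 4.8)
The plan is to reduce the statement to the ray criterion of Remark~\ref{rem:aaaac} together with the propagation principle of Lemma~\ref{lem:cone}; after that the proof is pure combinatorial bookkeeping on $P=\Gamma(f)$.

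First I would make the Newton polygon explicit. Since $(a,0)$ and $(0,b)$ lie in $\supp(f)$, the series $f$ is convenient, so $P=\Gamma(f)$ is a $C$-polytope. From $c,d\geq 1$ and $ad+bc<ab$ one gets $c<a$ and $d<b$ automatically (for instance $c\geq a$ would force $bc\geq ba$ and hence $ad+bc\geq ad+ab>ab$), so the three support points $(a,0)$, $(c,d)$, $(0,b)$ are pairwise incomparable; since $(c,d)$ lies strictly below the segment from $(a,0)$ to $(0,b)$ it is a vertex of $\Gamma_+(f)$, and when $\lambda\neq 0$ the polytope $P$ has exactly the two facets $\Delta_1=[(a,0),(c,d)]$ and $\Delta_2=[(c,d),(0,b)]$. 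Its zero-dimensional faces are then the three vertices, and the rays they span are the positive $x$-axis, the positive $y$-axis, and $\rho:=\R_{\geq 0}\cdot(c,d)$, the last being a common edge of the cones $C_{\Delta_1}$ and $C_{\Delta_2}$.

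Next I would apply Remark~\ref{rem:aaaac}: $f$ satisfies \AAa\ (resp.\ \AAC) w.r.t.\ $P$ if and only if each of these three rays carries a lattice point $\alpha$ with $\ux^\alpha=0$ in $\gr_P^A(M_f)$ (resp.\ in $\gr_P^{AC}(T_f)$). The lattice points of the two coordinate rays are precisely the $x^m$ and the $y^n$, so the first two conditions read literally "$x^m=0$ for some $m\in\N$" and "$y^n=0$ for some $n\in\N$". It remains to identify the condition on $\rho$ with "$x^{ck}y^{dk}=0$ for some $k\in\N$". The implication "$\Leftarrow$" is trivial as $(ck,dk)\in\rho$. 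For "$\Rightarrow$", write $g=\gcd(c,d)$ and $(c',d')=(c/g,d/g)$, so the lattice points of $\rho$ are the $(jc',jd')$, $j\in\N$; if $\ux^{(j_0c',j_0d')}=0$, then for $k:=j_0$ we have $(ck,dk)=(j_0c',j_0d')+j_0(g-1)(c',d')$ with both summands in $C_{\Delta_1}\cap\Z^2$, so Lemma~\ref{lem:cone} gives $x^{ck}y^{dk}=0$. Combining the three equivalences yields exactly the asserted characterisation.

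I do not expect a real obstacle: the argument is just an assembly of Remark~\ref{rem:aaaac} and Lemma~\ref{lem:cone}. The only points needing care are (i) deducing $c<a$ and $d<b$ from the hypotheses, so that $P$ genuinely has the claimed two facets and $(c,d)$ is genuinely a zero-dimensional face, and (ii) the small $\gcd$-bookkeeping in the "$\Rightarrow$" step for $\rho$. One degenerate case must also be dispatched: if $\lambda=0$ then $(c,d)$ is not a vertex of $P=[(a,0),(0,b)]$, but the cone over the unique facet is all of $\R_{\geq 0}^2$, so "$x^m=0$" already forces $x^{ck}y^{dk}=0$ whenever $ck\geq m$ by Lemma~\ref{lem:cone}; hence the condition involving $k$ is then automatic and the claimed equivalence still holds.
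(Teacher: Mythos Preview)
Your approach is exactly the paper's: the proof there consists of a single sentence invoking Remark~\ref{rem:aaaac} together with a picture of $\Gamma(f)$, and your argument is a fleshed-out version of precisely that reduction. Two minor remarks: the inequalities $c<a$ and $d<b$ are already explicit hypotheses, so your derivation of them is unnecessary; and your $\gcd$ bookkeeping for the ray through $(c,d)$ and your treatment of the degenerate case $\lambda=0$ are genuine details the paper leaves implicit, so there you are being more careful than the original.
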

   \begin{proof}
     The Newton diagram of $f$ is schematically shown in
     Figure~\ref{fig:abcd}, and the result follows from
     Remark~\ref{rem:aaaac}.  
     \begin{figure}[h]
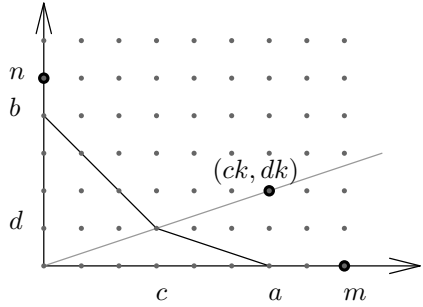

       \centering
       \begin{texdraw}
         \arrowheadtype t:V
         \drawdim cm \relunitscale 0.5 \linewd 0.04 
         \move (0 0) \avec (10 0) \move (0 0) \avec (0 7)
         \setgray 0.6
         \move (0 0) \lvec (9 3)
         \setgray 0
         \move (6 0) \lvec (3 1) \lvec (0 4)
         \move (0 5) \fcir f:0 r:0.15
         \move (8 0) \fcir f:0 r:0.15
         \move (6 2) \fcir f:0 r:0.15
         \htext (8 -0.9){$m$}
         \htext (6 -0.9){$a$}
         \htext (3 -0.9){$c$}
         \htext (-0.9 5){$n$}
         \htext (-0.9 4){$b$}
         \htext (-0.9 1){$d$}
         \htext (4.5 2.2){$(ck,dk)$}
         \move (0 0) \fcir f:0.4 r:0.06 
         \move (1 0) \fcir f:0.4 r:0.06 
         \move (2 0) \fcir f:0.4 r:0.06 
         \move (3 0) \fcir f:0.4 r:0.06 
         \move (4 0) \fcir f:0.4 r:0.06 
         \move (5 0) \fcir f:0.4 r:0.06 
         \move (6 0) \fcir f:0.4 r:0.06 
         \move (7 0) \fcir f:0.4 r:0.06 
         \move (8 0) \fcir f:0.4 r:0.06 
         \move (0 1) \fcir f:0.4 r:0.06 
         \move (1 1) \fcir f:0.4 r:0.06 
         \move (2 1) \fcir f:0.4 r:0.06 
         \move (3 1) \fcir f:0.4 r:0.06 
         \move (4 1) \fcir f:0.4 r:0.06 
         \move (5 1) \fcir f:0.4 r:0.06 
         \move (6 1) \fcir f:0.4 r:0.06 
         \move (7 1) \fcir f:0.4 r:0.06 
         \move (8 1) \fcir f:0.4 r:0.06 
         \move (0 2) \fcir f:0.4 r:0.06 
         \move (1 2) \fcir f:0.4 r:0.06 
         \move (2 2) \fcir f:0.4 r:0.06 
         \move (3 2) \fcir f:0.4 r:0.06 
         \move (4 2) \fcir f:0.4 r:0.06 
         \move (5 2) \fcir f:0.4 r:0.06 
         \move (6 2) \fcir f:0.4 r:0.06 
         \move (7 2) \fcir f:0.4 r:0.06 
         \move (8 2) \fcir f:0.4 r:0.06 
         \move (0 3) \fcir f:0.4 r:0.06 
         \move (1 3) \fcir f:0.4 r:0.06 
         \move (2 3) \fcir f:0.4 r:0.06 
         \move (3 3) \fcir f:0.4 r:0.06 
         \move (4 3) \fcir f:0.4 r:0.06 
         \move (5 3) \fcir f:0.4 r:0.06 
         \move (6 3) \fcir f:0.4 r:0.06 
         \move (7 3) \fcir f:0.4 r:0.06 
         \move (8 3) \fcir f:0.4 r:0.06 
         \move (0 4) \fcir f:0.4 r:0.06 
         \move (1 4) \fcir f:0.4 r:0.06 
         \move (2 4) \fcir f:0.4 r:0.06 
         \move (3 4) \fcir f:0.4 r:0.06 
         \move (4 4) \fcir f:0.4 r:0.06 
         \move (5 4) \fcir f:0.4 r:0.06 
         \move (6 4) \fcir f:0.4 r:0.06 
         \move (7 4) \fcir f:0.4 r:0.06 
         \move (8 4) \fcir f:0.4 r:0.06 
         \move (0 5) \fcir f:0.4 r:0.06 
         \move (1 5) \fcir f:0.4 r:0.06 
         \move (2 5) \fcir f:0.4 r:0.06 
         \move (3 5) \fcir f:0.4 r:0.06 
         \move (4 5) \fcir f:0.4 r:0.06 
         \move (5 5) \fcir f:0.4 r:0.06 
         \move (6 5) \fcir f:0.4 r:0.06                  
         \move (7 5) \fcir f:0.4 r:0.06 
         \move (8 5) \fcir f:0.4 r:0.06 
         \move (0 6) \fcir f:0.4 r:0.06 
         \move (1 6) \fcir f:0.4 r:0.06 
         \move (2 6) \fcir f:0.4 r:0.06 
         \move (3 6) \fcir f:0.4 r:0.06 
         \move (4 6) \fcir f:0.4 r:0.06 
         \move (5 6) \fcir f:0.4 r:0.06 
         \move (6 6) \fcir f:0.4 r:0.06                  
         \move (7 6) \fcir f:0.4 r:0.06 
         \move (8 6) \fcir f:0.4 r:0.06 
       \end{texdraw}
       \caption{The Newton diagram of $x^a+y^b+\lambda\cdot x^cy^d$.}
       \label{fig:abcd}
     \end{figure}
   \end{proof}

   \begin{example}[$E_{3,3}$-Singularity in characteristic $3$]\label{ex:e33}
     Let $\Char(\K)=3$ and consider the equation
     \begin{displaymath}
       f=x^{12}+x^3y^2+y^3\in\K[[x,y]].
     \end{displaymath}
     $f$ is piecewise homogeneous with respect to its Newton diagram
     and using the procedure \texttt{isAC} from the \textsc{Singular}
     library \texttt{gradalg.lib} we can check that $x^{15}$, $y^{15}$
     and $x^9y^6$ are zero in $\gr_P^{AC}(T_f)$. Thus $f$ is \AAC\ 
     with respect to $\Gamma(f)$ by
     Corollary~\ref{cor:abcd}. Moreover, using the procedure
     \texttt{ACgrbase} 
     from the same library we can compute the regular basis
     \begin{displaymath}
       B=\{1,x,\ldots,x^{12},y,xy,x^2y,y^2,xy^2,x^2y^2,xy^3,x^2y^3,x^2y^4\}
     \end{displaymath}
     for $T_f$. Hence, $\dim_\K\big(\gr_P^{AC}(T_f)\big)=|B|=22$ while
     $\tau(f)=21$. This shows that $f$ is \emph{not} \AC. 

     Theorem~\ref{thm:nfce} shows that any power series $g$ whose
     principal part with respect to $\Gamma(f)$ is $f$ satisfies
     \begin{displaymath}
       g\;\sim_c\;f+c_1\cdot xy^3+c_2\cdot x^2y^3+c_3\cdot x^2y^4
     \end{displaymath}
     for suitable $c_1,c_2,c_3\in\K$. 

     Scaling the weight vectors corresponding to the facets of
     $\Gamma(f)$ suitably they are $w_1=(6,27)$ and $w_2=(8,24)$, and
     $f$ is \ph\  of piecewise degree $72$. Moreover, the maximum of the
     piecewise degree of the monomials in $B$ is $d=112$, and an easy
     computation shows that $\m^{19}\subseteq F_{113}$. By
     Corollary~\ref{cor:fdce} we therefore know that the contact
     determinacy bound 
     of $f$ is at most $18$. That is much better than the bound
     \begin{displaymath}
       2\cdot\tau(f)-\ord(f)+2=41
     \end{displaymath}
     which \cite[Theorem~2.1]{BGM10} gives.\hfill$\Box$
   \end{example}

   \begin{example}[$E_7$-Singularity]
     Let $f\in\K[[x,y,z]]$ and let $P$ be a $C$-polytope 
     containing $\Gamma(f)$ and suppose that the  principal part of
     $f$ is  $\In_P(f)=x^3+xy^3+z^2$. Then $f$ is \csqh\  and our
     methods show the following (for the 
     details we refer to \cite[Example~3.3.22]{Bou09}):

     \emph{1st Case: $\Char(\K)\not\in\{2,3\}$}: Then
     $f\;\sim_c\;\In_P(f)$, $\tau(f)=\tau\big(\In_P(f)\big)=7$ and the
     contact determinacy is $4$.  

     \emph{2nd Case: $\Char(\K)=3$}: Then $f\;\sim_c\;\In_P(f)+c\cdot
     x^2y^2$ for some $c\in\K$, $\tau\big(\In_P(f)\big)=9$ and the
     contact determinacy is again $4$. If $c\not=0$ then
     $\tau(f)=7$. 

     \emph{3rd Case: $\Char(\K)=2$}: Then
     $f\;\sim_c\;\In_P(f)+c_1\cdot y^3z+c_4\cdot y^4z$,
     $\tau\big(\In_P(f)\big)=14$ and the determinacy is
     $5$.      
   \end{example}

   \begin{example}[$W_{1,1}$-Singularities] \label{ex:w11}
     Let $f\in\K[[x,y]]$ be such that the principal part with respect
     to $P=\Gamma(f)$ is  $\In_P(f)=x^7+x^3y^2+y^4$. Then $f$ is \csph\ 
     and our methods give the 
     following normal forms (for the details we refer to
     \cite[Example~3.3.9]{Bou09}):

     \emph{1st Case: $\Char(\K)\not\in\{2,3\}$}: Then
     $f\;\sim_c\;\In_P(f)$. 

     \emph{2nd Case: $\Char(\K)=3$}: Then $f\;\sim_c\;\In_P(f)+
     c_1\cdot xy^4+c_2\cdot x^2y^3+ c_3\cdot x^2y^4 +c_4\cdot
     x^2y^5$ for some $c_1,\ldots,c_4\in\K$. However, considering
     parametrisations it can be shown 
     that actually $f\;\sim_c\;\In_P(f)+c\cdot x^2y^3$ for some
     $c\in\K$ (see \cite{Bou02}).

     \emph{3rd Case: $\Char(\K)=2$}: Then $f\;\sim_c\;\In_P(f)+c\cdot
     x^6y$ for some $c\in\K$.

     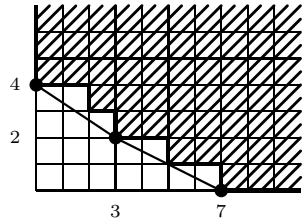
\begin{figure}[h]
       \centering
       \begin{picture}(100,80)
         \put(10,10){\line(1,0){100}}
         \put(10,20){\line(1,0){100}}
         \put(10,30){\line(1,0){100}}
         \put(10,40){\line(1,0){100}}
         \put(10,50){\line(1,0){100}}
         \put(10,60){\line(1,0){100}}
         \put(10,70){\line(1,0){100}}         
         \put(10,10){\line(0,1){70}}
         \put(20,10){\line(0,1){70}}
         \put(30,10){\line(0,1){70}}
         \put(40,10){\line(0,1){70}}
         \put(50,10){\line(0,1){70}}
         \put(60,10){\line(0,1){70}}
         \put(70,10){\line(0,1){70}}
         \put(80,10){\line(0,1){70}}
         \put(90,10){\line(0,1){70}}
         \put(100,10){\line(0,1){70}}
         \thicklines\drawline[12](10,50)(40,30)
         \thicklines\drawline[12](40,30)(80,10)         
         \linethickness{0.2mm}\scriptsize
         \Thicklines
         \put(10,50){\line(0,1){30}}
         \put(10,50){\line(1,0){20}}
         \put(30,40){\line(0,1){10}}
         \put(30,40){\line(1,0){10}}
         \put(40,30){\line(0,1){10}}
         \put(40,30){\line(1,0){20}}
         \put(60,20){\line(0,1){10}}
         \put(60,20){\line(1,0){20}}
         \put(80,10){\line(0,1){10}}
         \put(80,10){\line(1,0){30}}
         \put(10,75){\line(1,1){5}}
         \put(10,70){\line(1,1){10}}
         \put(10,65){\line(1,1){15}}
         \put(10,60){\line(1,1){20}}
         \put(10,55){\line(1,1){25}}
         \put(10,50){\line(1,1){30}}
         \put(15,50){\line(1,1){30}}
         \put(20,50){\line(1,1){30}}
         \put(25,50){\line(1,1){30}}
         \put(30,50){\line(1,1){30}}
         \put(30,45){\line(1,1){35}}
         \put(30,40){\line(1,1){40}}
         \put(35,40){\line(1,1){40}}
         \put(40,40){\line(1,1){40}}
         \put(40,35){\line(1,1){45}}
         \put(40,30){\line(1,1){50}}
         \put(45,30){\line(1,1){50}}
         \put(50,30){\line(1,1){50}}
         \put(55,30){\line(1,1){50}}
         \put(60,30){\line(1,1){50}}
         \put(60,25){\line(1,1){50}}
         \put(60,20){\line(1,1){50}}
         \put(65,20){\line(1,1){45}}
         \put(70,20){\line(1,1){40}}
         \put(75,20){\line(1,1){35}}
         \put(80,20){\line(1,1){30}}
         \put(80,15){\line(1,1){30}}
         \put(80,10){\line(1,1){30}}
         \put(85,10){\line(1,1){25}}
         \put(90,10){\line(1,1){20}}
         \put(95,10){\line(1,1){15}}
         \put(100,10){\line(1,1){10}}
         \put(105,10){\line(1,1){5}}
         \put(10,50){\circle*{4}}
         \put(40,30){\circle*{4}}
         \put(80,10){\circle*{4}}
         \put(78,0){$7$}
         \put(38,0){$3$}
         \put(0,28){$2$}
         \put(0,48){$4$}
         \put(5,-15)

       \end{picture}
       \caption{The Newton diagram of $x^7+x^3y^2+y^4$ for
         $\Char(\K)\not=2,3,7$.} 
       \label{fig:npw11}
     \end{figure}
     \hfill$\Box$
   \end{example}

   \nocite{AGV85}
   \nocite{AGV88}


\providecommand{\bysame}{\leavevmode\hbox to3em{\hrulefill}\thinspace}

\end{document}